\newtheorem{theorem}{Theorem}[section]
\theoremstyle{plain}
\newtheorem{acknowledgement}[theorem]{Acknowledgement}
\newtheorem{corollary}[theorem]{Corollary}
\newtheorem{lemma}[theorem]{Lemma}
\newtheorem{proposition}[theorem]{Proposition}
\newtheorem{remark}[theorem]{Remark}
\numberwithin{equation}{section}
\begin{document}
\title[Non-Archimedean Parabolic-type Equations ]{Non-local Operators, Non-Archimedean Parabolic-type Equations with Variable
Coefficients and Markov Processes}
\author{L. F. Chacón-Cortes}
\author{W. A. Zúñiga-Galindo}
\address{Centro de Investigación y de Estudios Avanzados del Instituto Politécnico Nacional\\
Departamento de Matemáticas- Unidad Querétaro\\
Libramiento Norponiente \#2000, Fracc. Real de Juriquilla. Santiago de
Querétaro, Qro. 76230\\
México}
\email{fchaconc@math.cinvestav.edu.mx, wazuniga@math.cinvestav.edu.mx}
\thanks{The second author was partially supported by Conacyt (Mexico), Grant \# 127794.}
\subjclass[2000]{Primary 35K90, 60J25; Secondary 26E30}
\keywords{Parabolic-type equations, diffusion, dynamics of disordered systems, Markov
processes, p-adic fields, non-Archimedean analysis.}

\begin{abstract}
In this article, we introduce a new class of parabolic-type
pseudo\-\-\allowbreak differential equations with variable coefficients over
the $p$-adics. We establish the existence and uniqueness of solutions for the
Cauchy problem associated with these equations. The fundamental solutions of
these equations are connected with Markov processes. Some of these equations
are related to new models of complex systems.

\end{abstract}
\maketitle

\section{Introduction}

Stochastic processes on $p$-adic spaces, or more generally on ultrametric
spaces, have been studied extensively due to its connections with models of
complex systems, see e.g. \cite{A-K1}-\cite{A-K2}, \cite{Av-3}-\cite{Av-5},
\cite{Ch-Z}, \cite{Dra-Kh-K-V}, \cite{Ka}, \cite{Koch}, \cite{K-Kos},
\cite{M-P-V}, \cite{R-T-V}, \cite{V-V-Z}, and the references therein. In
\cite{Av-3}-\cite{Av-5}, Avetisov et al. introduced a new class of models for
complex systems based on $p$-adic analysis, these models can be applied, for
instance, to the study the relaxation of biological complex systems. From a
mathematical point view, in these models the time-evolution of a complex
system is described by a $p$-adic master equation (a parabolic-type
pseudodifferential equation) which controls the time-evolution of a transition
function of a random walk on an ultrametric space, and the random walk
describes the dynamics of the system in the space of configurational states
which is approximated by an ultrametric space ($\mathbb{Q}_{p}$). The simplest
type of master equation is the one-dimensional $p$-adic heat equation. This
equation was introduced in the book of Vladimirov, Volovich and Zelenov
\cite[Section \ XVI]{V-V-Z}. In \cite[Chapters 4, 5]{Koch} Kochubei presented
a general theory for one-dimensional parabolic-type pseudodifferential
equations with variable coefficients, whose fundamental solutions are
transition density functions for Markov processes in the $p$-adic line, see
also \cite{Ro}, \cite{R-Zu}, \cite{Va1}. In \cite{Zu}, the second author
introduced $p$-adic analogs for the $n$-dimensional elliptic operators and
studied the corresponding heat equations and the associated Markov processes,
see also  \cite{C-Z2}, \cite{Va1}. 

In \cite{Ch-Z}, the authors introduced a new type of non-local operators which
are na\-turally connected with \ parabolic-type pseudodifferential equations.
Building up on \cite{Ch-Z} and \cite{Koch}-\cite{Koch0}, in this arti\-cle, we
introduce a new class of parabolic-type pseudodifferential equations with
variable coefficients, which contains the one-dimensional $p$-adic heat
equation of \cite{V-V-Z}, the equations studied by Kochubei in \cite{Koch},
and the equations studied by Rodríguez-Vega in \cite{Ro}. Our theory is not
applicable to the equations studied in \cite{Zu}, \cite{C-Z2}. We establish
the existence and uniqueness of solutions for the Cauchy problem for such
equations, see Theorems \ref{Thm1}, \ref{mainTheo}, \ref{uniqueTheo}. We show
that the fundamental solutions of these equations are transition density
functions of Markov processes, see Theorem \ref{Thm2}. Finally, we study the
well-possednes of the Cauchy problem, see Theorem \ref{Thm3}.

\section{\label{Section1}Preliminaries}

In this section we fix the notation and collect some basic results on $p$-adic
analysis that we will use through the article. For a detailed exposition the
reader may consult \cite{A-K-S}, \cite{Taibleson}, \cite{V-V-Z}.

\subsection{The field of $p$-adic numbers}

Along this article $p$ will denote a prime number. The field of $p-$adic
numbers $\mathbb{Q}_{p}$ is defined as the completion of the field of rational
numbers $\mathbb{Q}$ with respect to the $p-$adic norm $|\cdot|_{p}$, which is
defined as
\[
|x|_{p}=%
\begin{cases}
0 & \text{if }x=0\\
p^{-\gamma} & \text{if }x=p^{\gamma}\dfrac{a}{b},
\end{cases}
\]
where $a$ and $b$ are integers coprime with $p$. The integer $\gamma:=ord(x)$,
with $ord(0):=+\infty$, is called the\textit{ }$p-$\textit{adic order of} $x$.
We extend the $p-$adic norm to $\mathbb{Q}_{p}^{n}$ by taking%
\[
||x||_{p}:=\max_{1\leq i\leq n}|x_{i}|_{p},\qquad\text{for }x=(x_{1}%
,\dots,x_{n})\in\mathbb{Q}_{p}^{n}.
\]
We define $ord(x)=\min_{1\leq i\leq n}\{ord(x_{i})\}$, then $||x||_{p}%
=p^{-\text{ord}(x)}$. The set $\left(  \mathbb{Q}_{p}^{n},||\cdot
||_{p}\right)  $ is a complete ultrametric space. As a topological space
$\mathbb{Q}_{p}$\ is homeomorphic to a Cantor-like subset of the real line.

Any $p-$adic number $x\neq0$ has a unique expansion $x=p^{ord(x)}\sum
_{j=0}^{\infty}x_{j}p^{j}$, where $x_{j}\in\{0,1,2,\dots,p-1\}$ and $x_{0}%
\neq0$. By using this expansion, we define \textit{the fractional part of
}$x\in\mathbb{Q}_{p}$, denoted $\{x\}_{p}$, as the rational number
\[
\{x\}_{p}=%
\begin{cases}
0 & \text{if }x=0\text{ or }ord(x)\geq0\\
p^{\text{ord}(x)}\sum_{j=0}^{-ord(x)-1}x_{j}p^{j} & \text{if }ord(x)<0.
\end{cases}
\]
For $\gamma\in\mathbb{Z}$, denote by $B_{\gamma}^{n}(a)=\{x\in\mathbb{Q}%
_{p}^{n}:||x-a||_{p}\leq p^{\gamma}\}$ \textit{the ball of radius }$p^{\gamma
}$ \textit{with center at} $a=(a_{1},\dots,a_{n})\in\mathbb{Q}_{p}^{n}$, and
take $B_{\gamma}^{n}(0):=B_{\gamma}^{n}$. Notice that $B_{\gamma}%
^{n}(a)=B_{\gamma}(a_{1})\times\cdots\times B_{\gamma}(a_{n})$, where
$B_{\gamma}(a_{i}):=\{x\in\mathbb{Q}_{p}:|x-a_{i}|_{p}\leq p^{\gamma}\}$ is
the one-dimensional ball of radius $p^{\gamma}$ with center at $a_{i}%
\in\mathbb{Q}_{p}$. The ball $B_{0}^{n}$ equals the product of $n$ copies of
$B_{0}:=\mathbb{Z}_{p}$, \textit{the ring of }$p-$\textit{adic integers}. We
denote by $\Omega(\left\Vert x\right\Vert _{p})$ the characteristic function
of $B_{0}^{n}$. For more general sets, say Borel sets, we use ${\LARGE 1}%
_{A}\left(  x\right)  $ to denote the characteristic function of $A$.

\subsection{The Bruhat-Schwartz space}

A complex-valued function $\varphi$ defined on $\mathbb{Q}_{p}^{n}$ is
\textit{called locally constant} if for any $x\in\mathbb{Q}_{p}^{n}$ there
exists an integer $l=l(x)\in\mathbb{Z}$ such that%
\begin{equation}
\varphi(x+x^{\prime})=\varphi(x)\text{ for }x^{\prime}\in B_{l}^{n}.
\label{local_constancy}%
\end{equation}

The set of all locally constant functions $\varphi$, for which the integer
$l(x)$ is independent of $x$, form $\mathbb{C}$-vector space denoted by
$\widetilde{\mathcal{E}}(\mathbb{Q}_{p}^{n}):=\widetilde{\mathcal{E}}$. Given
$\varphi\in\widetilde{\mathcal{E}}$, we call the largest possible $l=l\left(
\varphi\right)  $, the \textit{parameter of local constancy of} $\varphi$.

A function $\varphi:\mathbb{Q}_{p}^{n}\rightarrow\mathbb{C}$ is called a
\textit{Bruhat-Schwartz function (or a test function)} if it is locally
constant with compact support. The $\mathbb{C}$-vector space of
Bruhat-Schwartz functions is denoted by $S(\mathbb{Q}_{p}^{n}):=S$. Notice
that $S\subset\widetilde{\mathcal{E}}$.

Let $S^{\prime}(\mathbb{Q}_{p}^{n}):=S^{\prime}$ denote the set of all
functionals (distributions) on $S(\mathbb{Q}_{p}^{n})$. All functionals on
$S(\mathbb{Q}_{p}^{n})$ are continuous.

Set $\chi_{p}(y)=\exp(2\pi i\{y\}_{p})$ for $y\in\mathbb{Q}_{p}$. The map
$\chi_{p}(\cdot)$ is an additive character on $\mathbb{Q}_{p}$, i.e. a
continuos map from $\mathbb{Q}_{p}$ into the unit circle satisfying $\chi
_{p}(y_{0}+y_{1})=\chi_{p}(y_{0})\chi_{p}(y_{1})$, $y_{0},y_{1}\in
\mathbb{Q}_{p}$.

Given $\xi=(\xi_{1},\dots,\xi_{n})$ and $x=(x_{1},\dots,x_{n})\in
\mathbb{Q}_{p}^{n}$, we set $\xi\cdot x:=\sum_{j=1}^{n}\xi_{j}x_{j}$. The
Fourier transform of $\varphi\in S(\mathbb{Q}_{p}^{n})$ is defined as
\[
(\mathcal{F}\varphi)(\xi)=%
%TCIMACRO{\dint \limits_{\mathbb{Q}_{p}^{n}}}%
%BeginExpansion
{\displaystyle\int\limits_{\mathbb{Q}_{p}^{n}}}
%EndExpansion
\Psi(-\xi\cdot x)\varphi(x)d^{n}x\quad\text{for }\xi\in\mathbb{Q}_{p}^{n},
\]
where $d^{n}x$ is the Haar measure on $\mathbb{Q}_{p}^{n}$ normalized by the
condition $vol(B_{0}^{n})=1$. The Fourier transform is a linear isomorphism
from $S(\mathbb{Q}_{p}^{n})$ onto itself satisfying $(\mathcal{F}%
(\mathcal{F}\varphi))(\xi)=\varphi(-\xi)$. We will also use the notation
$\mathcal{F}_{x\rightarrow\xi}\varphi$ and $\widehat{\varphi}$\ for the
Fourier transform of $\varphi$.

\subsubsection{Fourier transform}

The Fourier transform $\mathcal{F}\left[  T\right]  $ of a distribution $T\in
S^{\prime}\left(  \mathbb{Q}_{p}^{n}\right)  $ is defined by%
\[
\left(  \mathcal{F}\left[  T\right]  ,\varphi\right)  =\left(  T,\mathcal{F}%
\left[  \varphi\right]  \right)  \text{ for all }\varphi\in S\left(
\mathbb{Q}_{p}^{n}\right)  \text{.}%
\]
The Fourier transform $f\rightarrow\mathcal{F}\left[  T\right]  $ is a linear
isomorphism from $S^{\prime}\left(  \mathbb{Q}_{p}^{n}\right)  $\ onto
$S^{\prime}\left(  \mathbb{Q}_{p}^{n}\right)  $. Furthermore, $T=\mathcal{F}%
\left[  \mathcal{F}\left[  T\right]  \left(  -\xi\right)  \right]  $.

\section{\label{Sect2}A class of non-local operators}

Denote by $\mathcal{\mathfrak{M}}_{\lambda}$, with $\lambda\geq0$,\ the
$\mathbb{C}$-vector space of all the functions $\varphi\in\widetilde
{\mathcal{E}}$ satisfying $\left\vert \varphi(x)\right\vert \leq
C(1+\left\Vert x\right\Vert _{p}^{\lambda})$. If the function $\varphi$
depends also on a parameter $t$, we shall say that \ $\varphi$ belongs to
$\mathcal{\mathfrak{M}}_{\lambda}$ \textit{uniformly with respect to} $t$, if
its constant $C$ and\ its parameter of local constancy do \ not depend on $t$.
Notice that, if$\ 0\leq\lambda_{1}\leq\lambda_{2}$, then
$\mathcal{\mathfrak{M}}_{0}\subseteq$\ $\mathcal{\mathfrak{M}}_{\lambda_{1}%
}\subseteq$ $\mathcal{\mathfrak{M}}_{\lambda_{2}},$ and that $S(\mathbb{Q}%
_{p}^{n})\subseteq$ $\mathcal{\mathfrak{M}}_{0}$.

Take $\mathbb{R}_{+}:=\left\{  x\in\mathbb{R};x\geq0\right\}  $, and fix a
function%
\[
w_{\alpha}:\mathbb{Q}_{p}^{n}\rightarrow\mathbb{R}_{+}%
\]
having the following properties:

\noindent(i) $w_{\alpha}\left(  y\right)  $ is a radial (i.e. $w_{\alpha
}\left(  y\right)  =w_{\alpha}\left(  \left\Vert y\right\Vert _{p}\right)
$)\ and continuous function;

\noindent(ii) $w_{\alpha}\left(  y\right)  =0$ if and only if $y=0$;

\noindent(iii) there exist constants $C_{0},C_{1}>0$, and $\alpha>n$ such
that
\[
C_{0}\left\Vert y\right\Vert _{p}^{\alpha}\leq w_{\alpha}(\left\Vert
y\right\Vert _{p})\leq C_{1}\left\Vert y\right\Vert _{p}^{\alpha}\text{ for
any }y\in\mathbb{Q}_{p}^{n}.
\]

Set
\[
A_{w_{\alpha}}\left(  \xi\right)  :={\int\limits_{\mathbb{Q}_{p}^{n}}}%
\frac{1-\Psi\left(  -y\cdot\xi\right)  }{w_{\alpha}\left(  \left\Vert
y\right\Vert _{p}\right)  }d^{n}y.
\]

In \ \cite{Ch-Z},\ we establish that function $A_{w_{\alpha}}$ is radial,
positive, continuous, $A_{w_{\alpha}}\left(  0\right)  =0$, and $A_{w_{\alpha
}}\left(  \xi\right)  =A_{w_{\alpha}}\left(  \left\Vert \xi\right\Vert
_{p}\right)  =A_{w_{\alpha}}\left(  p^{-ord(\xi)}\right)  $ is a decreasing
function of $ord(\xi)$, cf. \cite[Lemma 3.2]{Ch-Z}. In addition, we introduce
the following operator:%
\begin{equation}
(\boldsymbol{W}_{\alpha}\varphi)(x)={\int\limits_{\mathbb{Q}_{p}^{n}}}%
\frac{\varphi\left(  x-y\right)  -\varphi\left(  x\right)  }{w_{\alpha}\left(
\left\Vert y\right\Vert _{p}\right)  }d^{n}y\text{, }\varphi\in S(\mathbb{Q}%
_{p}^{n})\text{.} \label{W}%
\end{equation}
\ 

\begin{lemma}
\label{lemma1}If $\alpha-n>\lambda$, then $\boldsymbol{W}_{\alpha}$ can be
extended to $\mathcal{\mathfrak{M}}_{\lambda}$ and formula (\ref{W}) holds.
Furthermore, \ $\boldsymbol{W}_{\alpha}:\mathcal{\mathfrak{M}}_{\lambda
}\rightarrow\mathcal{\mathfrak{M}}_{\lambda}$.
\end{lemma}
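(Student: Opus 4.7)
The plan is to split the integral defining $(\boldsymbol{W}_{\alpha}\varphi)(x)$ at a ball adapted to the local constancy of $\varphi$: the near-origin part of the integrand vanishes identically because the numerator does, while the tail is controlled by the weight bound in property (iii) together with the polynomial growth defining $\mathfrak{M}_{\lambda}$. The hypothesis $\alpha-n>\lambda$ enters precisely as the threshold that makes the tail integral absolutely convergent.

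Fix $\varphi\in\mathfrak{M}_{\lambda}$ and let $l=l(\varphi)\in\mathbb{Z}$ be a uniform parameter of local constancy, so that $\varphi(x-y)-\varphi(x)=0$ whenever $\|y\|_p\le p^l$. Then formally
\begin{equation*}
(\boldsymbol{W}_{\alpha}\varphi)(x)=\int_{\|y\|_p>p^l}\frac{\varphi(x-y)-\varphi(x)}{w_{\alpha}(\|y\|_p)}\,d^ny .
\end{equation*}
Using the ultrametric estimate $\|x-y\|_p^{\lambda}\le\|x\|_p^{\lambda}+\|y\|_p^{\lambda}$, the growth $|\varphi(u)|\le C(1+\|u\|_p^{\lambda})$, and the lower bound $w_{\alpha}(\|y\|_p)\ge C_0\|y\|_p^{\alpha}$ from (iii), the integrand is dominated in absolute value by a constant multiple of $(1+\|x\|_p^{\lambda})\|y\|_p^{-\alpha}+\|y\|_p^{\lambda-\alpha}$. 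A standard shell computation on $\|y\|_p=p^{k}$ with $k>l$ shows that $\int_{\|y\|_p>p^l}\|y\|_p^{-\alpha}\,d^ny$ and $\int_{\|y\|_p>p^l}\|y\|_p^{\lambda-\alpha}\,d^ny$ are finite iff $\alpha>n$ and $\alpha-\lambda>n$, both of which hold under the assumption. This simultaneously extends $\boldsymbol{W}_{\alpha}$ to $\mathfrak{M}_{\lambda}$, validates formula (\ref{W}), and produces the growth estimate $|(\boldsymbol{W}_{\alpha}\varphi)(x)|\le C'(1+\|x\|_p^{\lambda})$.

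The step I expect to be the most delicate is checking that $\boldsymbol{W}_{\alpha}\varphi$ is locally constant with the \emph{same} parameter $l$. For $h\in B_l^n$ I would write the difference
\begin{equation*}
(\boldsymbol{W}_{\alpha}\varphi)(x+h)-(\boldsymbol{W}_{\alpha}\varphi)(x)=\int_{\|y\|_p>p^l}\frac{[\varphi(x+h-y)-\varphi(x+h)]-[\varphi(x-y)-\varphi(x)]}{w_{\alpha}(\|y\|_p)}\,d^ny ,
\end{equation*}
use $\varphi(x+h)=\varphi(x)$ to drop the constant terms, and in the integral containing $\varphi(x+h-y)$ substitute $z=y-h$. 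Since $\|h\|_p\le p^l<\|y\|_p$, the non-Archimedean triangle inequality forces $\|z\|_p=\|y\|_p$, so both the region $\{\|y\|_p>p^l\}$ and the weight $w_{\alpha}(\|y\|_p)$ are invariant under this translation; the two integrals then cancel exactly. Hence $(\boldsymbol{W}_{\alpha}\varphi)(x+h)=(\boldsymbol{W}_{\alpha}\varphi)(x)$, so $\boldsymbol{W}_{\alpha}\varphi\in\widetilde{\mathcal{E}}$ with parameter of local constancy at least $l$; combined with the growth bound this gives $\boldsymbol{W}_{\alpha}\varphi\in\mathfrak{M}_{\lambda}$, completing the lemma.
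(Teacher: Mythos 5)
Your proof is correct and follows essentially the same route as the paper's: truncate the integral to $\|y\|_p>p^l$ using local constancy, bound the tail via $w_\alpha(\|y\|_p)\ge C_0\|y\|_p^\alpha$ and the growth of $\varphi$, and observe that $\alpha>n$ and $\alpha-n>\lambda$ make the resulting shell sums converge. Your single ultrametric inequality $\|x-y\|_p^{\lambda}\le\|x\|_p^{\lambda}+\|y\|_p^{\lambda}$ neatly replaces the paper's three-case comparison of $\|x\|_p$ with $\|y\|_p$, and your translation argument for local constancy carefully justifies a step the paper merely asserts from formula (\ref{W1}).
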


\begin{proof}
Notice that if $\varphi\in\mathcal{\mathfrak{M}}_{\lambda}$, there exists a
constant $l=l\left(  \varphi\right)  \in\mathbb{Z}$, such that
\begin{equation}
(\boldsymbol{W}_{\alpha}\varphi)(x)={\int\limits_{\left\Vert y\right\Vert
_{p}\geq p^{l}}}\frac{\varphi\left(  x-y\right)  -\varphi\left(  x\right)
}{w_{\alpha}\left(  \left\Vert y\right\Vert _{p}\right)  }d^{n}y. \label{W1}%
\end{equation}
We now show that $\left\vert (\boldsymbol{W}_{\alpha}\varphi)(x)\right\vert
\leq A(1+\left\Vert x\right\Vert _{p}^{\lambda})$. By using that $\varphi
\in\mathcal{\mathfrak{M}}_{\lambda}$, and $\alpha>n$,%
\[
\left\vert (\boldsymbol{W}_{\alpha}\varphi)(x)\right\vert \leq C{\int
\limits_{\left\Vert y\right\Vert _{p}\geq p^{l}}}\frac{(1+\left\Vert
x-y\right\Vert _{p}^{\lambda})}{\left\Vert y\right\Vert _{p}^{\alpha}}%
d^{n}y+C^{\prime}(1+\left\Vert x\right\Vert _{p}^{\lambda}){.}%
\]
Hence, it is sufficient to show that the above integral can be bounded by
$A(1+\left\Vert x\right\Vert _{p}^{\lambda})$, for some positive constant $A$.
If $\left\Vert x\right\Vert _{p}>\left\Vert y\right\Vert _{p}$,
\begin{align*}
{\int\limits_{\left\Vert y\right\Vert _{p}\geq p^{l}}}\frac{(1+\left\Vert
x-y\right\Vert _{p}^{\lambda})}{\left\Vert y\right\Vert _{p}^{\alpha}}d^{n}y
&  \leq(1+\left\Vert x\right\Vert _{p}^{\lambda}){\int\limits_{\left\Vert
y\right\Vert _{p}\geq p^{l}}}\frac{1}{\left\Vert y\right\Vert _{p}^{\alpha}%
}d^{n}y\\
&  =B(1+\left\Vert x\right\Vert _{p}^{\lambda}),
\end{align*}
where $B$ is a positive constant. If $\left\Vert x\right\Vert _{p}<\left\Vert
y\right\Vert _{p}$, by using $\alpha-n>\lambda$,
\[
{\int\limits_{\left\Vert y\right\Vert _{p}\geq p^{l}}}\frac{(1+\left\Vert
x-y\right\Vert _{p}^{\lambda})}{\left\Vert y\right\Vert _{p}^{\alpha}}%
d^{n}y\leq{\int\limits_{\left\Vert y\right\Vert _{p}\geq p^{l}}}%
\frac{(1+\left\Vert y\right\Vert _{p}^{\lambda})}{\left\Vert y\right\Vert
_{p}^{\alpha}}d^{n}y<\infty.
\]
If $\left\Vert x\right\Vert _{p}=\left\Vert y\right\Vert _{p}\geq p^{l}$, we
take $x=p^{L}u$, $y=p^{L}v$, with $\left\Vert v\right\Vert _{p}=\left\Vert
u\right\Vert _{p}=1$, $L\in\mathbb{Z}$, then
\begin{align*}
{\int\limits_{\left\Vert y\right\Vert _{p}=\left\Vert x\right\Vert _{p}}}%
\frac{(1+\left\Vert x-y\right\Vert _{p}^{\lambda})}{\left\Vert y\right\Vert
_{p}^{\alpha}}d^{n}y  &  =p^{-L\left(  n-\alpha\right)  }{\int
\limits_{\left\Vert v\right\Vert _{p}=1}}(1+p^{-L\lambda}\left\Vert
u-v\right\Vert _{p}^{\lambda})d^{n}v\\
&  \leq A\left(  \left\Vert x\right\Vert _{p}^{-\left(  \alpha-n\right)
}+\left\Vert x\right\Vert _{p}^{-\left(  \alpha-n-\lambda\right)  }\right)
\leq A^{\prime}\left(  p,l,\alpha,n,\lambda\right)  ,
\end{align*}
where $A$, $A^{\prime}$ are positive constants.

Finally, by (\ref{W1}) $\boldsymbol{W}_{\alpha}\varphi$\ is locally constant.
\end{proof}

\section{\label{Sect3}Parabolic-type equations with constant coefficients}

Consider the following Cauchy problem:%
\begin{equation}
\left\{
\begin{array}
[c]{ll}%
\frac{\partial u}{\partial t}(x,t)-\kappa\cdot(\boldsymbol{W}_{\alpha
}u)(x,t)=f(x,t), & x\in\mathbb{Q}_{p}^{n},t\in(0,T]\\
& \\
u\left(  x,0\right)  =\varphi(x), &
\end{array}
\right.  \label{Cauchy1}%
\end{equation}
where, $\alpha>n,$ $\kappa$, $T$ are positive constants, $\varphi\in
Dom(\boldsymbol{W}_{\alpha}):=\mathcal{\mathfrak{M}}_{\lambda}$, with
$\alpha-n>\lambda$, $f$ is continuous in $(x,t)$ and belongs to
$\mathcal{\mathfrak{M}}_{\lambda}$ uniformly with respect to $t$, and
$u:$\ $\mathbb{Q}_{p}^{n}\times\lbrack0,T]\rightarrow\mathbb{C}$ is an unknown function.\ 

We say that $u(x,t)$ is a \textit{solution of }(\ref{Cauchy1}), if $u(x,t)$ is
continuous in $(x,t)$, $u(\cdot,t)\in Dom(W_{\alpha})$ for $t\in\lbrack0,T],$
$u(x,\cdot)$ is continuously differentiable for $t\in(0,T]$, $u(x,t)\in
\mathcal{\mathfrak{M}}_{\lambda}\ $uniformly in $t$, and $\ u$ satisfies
(\ref{Cauchy1}) \ for all $t>0.$

Cauchy problem (\ref{Cauchy1}) was studied in \cite{Ch-Z} using semigroup
theory. In this article, we study this problem in the space
$\mathcal{\mathfrak{M}}_{\lambda}$, which is not contained in $L^{\rho}$ for
any $\rho\in\left[  1,\infty\right]  $, and thus we cannot use semigroup
theory, see e.g. \cite[Theorem 6.5]{Ch-Z}.

We define
\begin{equation}
Z(x,t;w_{\alpha},\kappa):=Z(x,t)=\underset{\mathbb{Q}_{p}^{n}}{\int}e^{-\kappa
tA_{w_{\alpha}}(\left\Vert \xi\right\Vert _{p})}\Psi(x\cdot\xi)d^{n}\xi,
\label{Z}%
\end{equation}
\ for $t>0$ \ and \ $x\in\mathbb{Q}_{p}^{n}$. Notice that, $Z(x,t)=\mathcal{F}%
_{\xi\rightarrow x}^{-1}[e^{-\kappa tA_{w_{\alpha}}(\left\Vert \xi\right\Vert
_{p})}]\in L^{1}\cap L^{2}$ for $t>0$, since \ $C^{\prime}\left\Vert
\xi\right\Vert _{p}^{\alpha-n}\leq$\ $A_{w_{\alpha}}(\left\Vert \xi\right\Vert
_{p})\leq C^{\prime\prime}\left\Vert \xi\right\Vert _{p}^{\alpha-n}$,\ cf.
\cite[Lemma 3.4]{Ch-Z}. Furthermore, $Z(x,t)\geq0$, for $t>0$, $x\in
\mathbb{Q}_{p}^{n}$, cf. \cite[Theorem 4.3 (i)]{Ch-Z}. These functions are
called \textit{heat kernels}. When considering $Z(x,t)$ as a function of $x$
for $t$ fixed we will write $Z_{t}(x)$.

We set%
\begin{align*}
u_{1}(x,t)  &  :=\underset{\mathbb{Q}_{p}^{n}}{\int}Z(x-y,t)\varphi
(y)d^{n}y\text{,}\\
u_{2}(x,t)  &  :=%
%TCIMACRO{\dint \limits_{0}^{t}}%
%BeginExpansion
{\displaystyle\int\limits_{0}^{t}}
%EndExpansion
\underset{\mathbb{Q}_{p}^{n}}{\int}Z(x-y,t-\theta)f(y,\theta)d^{n}yd\theta,
\end{align*}
for $\varphi,f\in\mathcal{\mathfrak{M}}_{\lambda}$ with $\ \alpha-n>\lambda$,
for $0\leq t\leq T$, and $\ x\in\mathbb{Q}_{p}^{n}$.

The main result of this section is the following:

\begin{theorem}
\label{Thm1}The function%
\[
u(x,t)=u_{1}(x,t)+u_{2}(x,t)
\]
is a solution of Cauchy Problem (\ref{Cauchy1}).
\end{theorem}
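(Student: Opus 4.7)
The plan is to verify separately that $u_1$ solves the homogeneous Cauchy problem (with forcing $0$ and initial datum $\varphi$) and that $u_2$ solves the inhomogeneous Cauchy problem with zero initial datum, and then to conclude by linearity. The linchpin of the argument is that for $t>0$ the heat kernel $Z_t(x)$ is a probability density on $\mathbb{Q}_p^n$ satisfying the semigroup identity $Z_t*Z_s=Z_{t+s}$ and the pointwise heat equation
\[
\frac{\partial Z_t}{\partial t}(x)=\kappa\,(\boldsymbol{W}_{\alpha} Z_t)(x),\qquad t>0,\ x\in\mathbb{Q}_p^n,
\]
which follows on the Fourier side from $\widehat{Z}_t(\xi)=e^{-\kappa t A_{w_{\alpha}}(\|\xi\|_p)}$ together with the symbol identity $\widehat{\boldsymbol{W}_{\alpha}\varphi}(\xi)=-A_{w_{\alpha}}(\|\xi\|_p)\widehat{\varphi}(\xi)$. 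Combined with the two-sided estimates $C'\|\xi\|_p^{\alpha-n}\le A_{w_{\alpha}}(\|\xi\|_p)\le C''\|\xi\|_p^{\alpha-n}$ recorded in the excerpt, this yields polynomial decay for $Z_t$ (roughly $Z_t(x)\le Ct/\|x\|_p^\alpha$ for $\|x\|_p$ large), which will repeatedly feed the dominated-convergence and Fubini justifications below.

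First I would treat $u_1$. Using the decay of $Z_t$ and $|\varphi(y)|\le C(1+\|y\|_p^\lambda)$, the convolution $u_1(x,t)=(Z_t*\varphi)(x)$ is absolutely convergent; splitting the $y$-integral according to the three ultrametric regimes $\|x-y\|_p<\|x\|_p$, $\|x-y\|_p=\|x\|_p$, $\|x-y\|_p>\|x\|_p$ exactly as in the proof of Lemma \ref{lemma1}, and using $\alpha-n>\lambda$, one obtains $|u_1(x,t)|\le A(1+\|x\|_p^\lambda)$ uniformly on compact $t$-intervals, so $u_1(\cdot,t)\in\mathfrak{M}_\lambda$ uniformly in $t$; local constancy in $x$ is inherited from that of $\varphi$ by a translation inside the convolution. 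To verify the PDE I differentiate under the integral sign, justified by dominated convergence via $|\partial_t Z_t(x-y)|=\kappa|(\boldsymbol{W}_{\alpha}Z_t)(x-y)|$ and the integrability of this quantity against $(1+\|y\|_p^\lambda)$; this produces
\[
\frac{\partial u_1}{\partial t}(x,t)=\kappa\int_{\mathbb{Q}_p^n}(\boldsymbol{W}_{\alpha} Z_t)(x-y)\,\varphi(y)\,d^n y=\kappa(\boldsymbol{W}_{\alpha} u_1)(x,t),
\]
the last equality being a Fubini interchange of $\boldsymbol{W}_{\alpha}$ with the $y$-integral. Finally, $u_1(x,t)\to\varphi(x)$ as $t\to 0^+$ follows from the approximation-of-identity property: $Z_t\ge 0$, $\int Z_t\,d^n x=1$, $\widehat{Z}_t(\xi)\to 1$ pointwise, and $\varphi$ is locally constant.

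Next I treat $u_2$. Well-definedness and the uniform bound $u_2(\cdot,t)\in\mathfrak{M}_\lambda$ follow as for $u_1$, this time using the uniform-in-$t$ growth bound on $f$ and the continuity of $f$ in $(x,t)$. The crucial step is the time derivative, obtained via the Leibniz rule for parametric integrals:
\[
\frac{\partial u_2}{\partial t}(x,t)=\lim_{\theta\to t^-}(Z_{t-\theta}*f(\cdot,\theta))(x)+\int_0^t\frac{\partial}{\partial t}\bigl(Z_{t-\theta}*f(\cdot,\theta)\bigr)(x)\,d\theta.
\]
The boundary term equals $f(x,t)$ by the same approximation-of-identity property used for $u_1$; inserting $\partial_t Z_{t-\theta}=\kappa\boldsymbol{W}_{\alpha} Z_{t-\theta}$ into the remaining integral and exchanging $\boldsymbol{W}_{\alpha}$ with both integrations by Fubini produces $\kappa(\boldsymbol{W}_{\alpha} u_2)(x,t)$, so that $\partial_t u_2-\kappa\boldsymbol{W}_{\alpha} u_2=f$. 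The initial condition $u_2(x,0)=0$ is immediate.

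The principal technical obstacle throughout is the justification of pushing $\boldsymbol{W}_{\alpha}$ inside the spatial and temporal integrals. Because $\boldsymbol{W}_{\alpha}$ is itself a singular nonlocal operator and because the functions in play live in $\mathfrak{M}_\lambda$, which is not contained in any $L^\rho$, one cannot invoke the semigroup machinery used in \cite{Ch-Z}; each Fubini exchange must instead be verified by hand, using the polynomial decay of $Z_t$, the growth bounds on $\varphi$ and $f$, and the hypothesis $\alpha-n>\lambda$. Once all such interchanges are cleared, linearity gives that $u=u_1+u_2$ satisfies (\ref{Cauchy1}) with the prescribed initial datum.
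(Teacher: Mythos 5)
Your overall strategy coincides with the paper's: decompose $u=u_{1}+u_{2}$, establish $u\in\mathfrak{M}_{\lambda}$ via the kernel estimates and Proposition \ref{Conv}, recover the initial datum from the approximation-of-identity property of $Z_{t}$, and verify the equation by pushing $\partial_{t}$ and $\boldsymbol{W}_{\alpha}$ under the integrals using $\partial_{t}Z_{t}=\kappa(\boldsymbol{W}_{\alpha}Z_{t})$ (Corollary \ref{coro1}). The treatment of $u_{1}$ is essentially the paper's Lemma \ref{u1u2}, Lemma \ref{CI} and Proposition \ref{u1}.

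There is, however, a genuine gap in your treatment of $u_{2}$, at precisely the point you label the Leibniz/Fubini step. You propose to justify
\[
\int_{0}^{t}\int_{\mathbb{Q}_{p}^{n}}\frac{\partial Z}{\partial t}(x-y,t-\theta)\,f(y,\theta)\,d^{n}y\,d\theta
\quad\text{and}\quad
\int_{0}^{t}\int_{\mathbb{Q}_{p}^{n}}(\boldsymbol{W}_{\alpha}Z_{t-\theta})(x-y)\,f(y,\theta)\,d^{n}y\,d\theta
\]
by absolute convergence coming from ``polynomial decay, the growth bounds, and $\alpha-n>\lambda$.'' This fails: by Lemma \ref{lemmaZ'}(iv) and Lemma \ref{lemmaWZ}(ii) the kernels are bounded by $C\bigl(\lVert x-y\rVert_{p}+(t-\theta)^{\frac{1}{\alpha-n}}\bigr)^{-\alpha}$, and integrating this against $1+\lVert y\rVert_{p}^{\lambda}$ via Proposition \ref{Conv} produces a bound of order $(t-\theta)^{-1}(1+\lVert x\rVert_{p}^{\lambda})$, which is not integrable in $\theta$ near $\theta=t$. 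So neither double integral is absolutely convergent and Fubini/dominated convergence cannot be invoked as stated; the critical index $\gamma=\alpha$ is exactly the borderline case. The missing idea --- which is the actual content of Proposition \ref{u2} --- is a cancellation: since $\int_{\mathbb{Q}_{p}^{n}}\frac{\partial Z}{\partial t}(x-y,s)\,d^{n}y=0$ and $\int_{\mathbb{Q}_{p}^{n}}(\boldsymbol{W}_{\alpha}Z_{s})(x)\,d^{n}x=0$ (Lemma \ref{lemmaZ}(iv), Lemma \ref{lemmaZ'}(i), Lemma \ref{lemmaWZ}(iii)), one may replace $f(y,\theta)$ by $f(y,\theta)-f(x,\theta)$ in the integrand; the local constancy of $f$ then confines the $y$-integration to $\lVert x-y\rVert_{p}\geq p^{l}$, where the kernel bound is uniform in $\theta$ near $t$, and the singularity disappears. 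This is also why the paper's formula for $\partial_{t}u_{2}$ carries the difference $f(y,\theta)-f(x,\theta)$, and why the case $\gamma=\alpha$ in Proposition \ref{u2}(ii) is handled separately from $\gamma<\alpha$. Without this step your argument does not close.
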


The proof requires several steps.

\subsection{\textbf{Claim} $u(x,t)\in\mathcal{\mathfrak{M}}_{\lambda}$}

In order to prove this claim, we need some preliminary results.

\begin{remark}
\label{Remark}The function $Z_{t}(x)$ is radial since it is the
inverse\ Fourier transform of\ the radial function $e^{-\kappa tA_{w_{\alpha}%
}(\left\Vert \xi\right\Vert _{p})}$. Then $Z_{t}(x)$ is locally constant in
$\mathbb{Q}_{p}^{n}\backslash\{0\}.$ Furthermore, \ $Z_{t}(x+y)=Z_{t}(x)$ if
$\left\Vert y\right\Vert _{p}<\left\Vert x\right\Vert _{p}$ for any
$y\in\mathbb{Q}_{p}^{n}$ and $x\in\mathbb{Q}_{p}^{n}\backslash\{0\}$, and
$t>0$.
\end{remark}

\begin{lemma}
\label{lemmaZ} There exist positive constants $\ C_{1},C_{2}$ such that
$Z(x,t)$\ satisfies the following conditions:

(i) $Z(x,t)\leq C_{1}t^{-\frac{n}{\alpha-n}}$, for $t>0$ and $\ x\in
\mathbb{Q}_{p}^{n}$;

(ii) $Z(x,t)\leq C_{2}t\left\Vert x\right\Vert _{p}^{-\alpha}$, for $t>0$ and
$\ x\in\mathbb{Q}_{p}^{n}\backslash\{0\}$;

(iii) $Z(x,t)\leq\max\{2^{\alpha}C_{1},2^{\alpha}C_{2}\}t\left(  \left\Vert
x\right\Vert _{p}+t^{\frac{1}{\alpha-n}}\right)  ^{-\alpha}$, for $t>0$ and
$\ x\in\mathbb{Q}_{p}^{n}$;

(iv) $%
%TCIMACRO{\tint \nolimits_{\mathbb{Q}_{p}^{n}}}%
%BeginExpansion
{\textstyle\int\nolimits_{\mathbb{Q}_{p}^{n}}}
%EndExpansion
Z(x,t)d^{n}x=1$, for $t>0$.

\begin{proof}
(i) By (\ref{Z}) and \ Lemma 3.4 in \cite{Ch-Z},
\[
Z(x,t)\leq\underset{\mathbb{Q}_{p}^{n}}{\int}e^{-\kappa tA_{w_{\alpha}%
}(\left\Vert \xi\right\Vert _{p})}d^{n}\xi\leq\underset{\mathbb{Q}_{p}^{n}%
}{\int}e^{-C_{0}t\left\Vert \xi\right\Vert _{p}^{\alpha-n}}d^{n}\xi.
\]

Let\ $m$ be an integer such that $p^{m-1}\leq t^{\frac{1}{\alpha-n}}\leq
p^{m}$, then%
\[
Z(x,t)\leq\underset{\mathbb{Q}_{p}^{n}}{\int}e^{-C_{0}t\left\Vert
p^{-(m-1)}\xi\right\Vert _{p}^{\alpha-n}}d^{n}\xi,
\]
now, by changing variables as\ $z=p^{-(m-1)}\xi$, we have%
\[
Z(x,t)\leq p^{-(m-1)n}\underset{\mathbb{Q}_{p}^{n}}{\int}e^{-C_{0}t\left\Vert
z\right\Vert _{p}^{\alpha-n}}d^{n}z\leq C_{1}t^{-\frac{n}{\alpha-n}}.
\]
(ii)\ It follows\ from\ \cite[Lemma 4.1]{Ch-Z}.\ (iii) The results is obtained
from the two following inequalities. If $\left\Vert x\right\Vert _{p}\geq
t^{\frac{1}{\alpha-n}}$, then $\left\Vert x\right\Vert _{p}\geq\frac
{\left\Vert x\right\Vert _{p}}{2}+\frac{t^{\frac{1}{\alpha-n}}}{2}$\ and
\ $\left\Vert x\right\Vert _{p}^{-\alpha}\leq2^{\alpha}\left(  \left\Vert
x\right\Vert _{p}+t^{\frac{1}{\alpha-n}}\right)  ^{-\alpha}$,\ multiplying
by$\ C_{2}t$\ and using (ii),
\[
Z(x,t)\leq2^{\alpha}C_{2}t\left(  \left\Vert x\right\Vert _{p}+t^{\frac
{1}{\alpha-n}}\right)  ^{-\alpha}.
\]
If $\left\Vert x\right\Vert _{p}\leq t^{\frac{1}{\alpha-n}}$, then
$\frac{\left\Vert x\right\Vert _{p}}{2}+\frac{t^{\frac{1}{\alpha-n}}}{2}\leq
t^{\frac{1}{\alpha-n}}$ and \ $\left(  \left\Vert x\right\Vert _{p}%
+t^{\frac{1}{\alpha-n}}\right)  ^{-\alpha}\geq2^{-\alpha}t^{\frac{-\alpha
}{\alpha-n}}=2^{-\alpha}t^{-1-\frac{n}{\alpha-n}}$, multiplying by $C_{1}$ and
using (i),\
\[
Z(x,t)\leq2^{\alpha}C_{1}t\left(  \left\Vert x\right\Vert _{p}+t^{\frac
{1}{\alpha-n}}\right)  ^{-\alpha}.
\]
(iv) By (iii), $Z_{t}(x)\in L^{1}(\mathbb{Q}_{p}^{n})$ \ for $t>0$.\ Now, the
announced identity follows by applying the Fourier inversion formula.
\end{proof}
\end{lemma}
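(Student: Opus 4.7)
The plan is to establish the four bounds in order, since (iii) combines (i) and (ii) and (iv) uses (iii) to justify Fourier inversion.

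For (i), I would start from the defining integral \eqref{Z} and pass to absolute values, replacing the oscillatory factor $\Psi(x\cdot\xi)$ by $1$. This gives $Z(x,t)\le\int_{\mathbb{Q}_p^n}e^{-\kappa t A_{w_\alpha}(\|\xi\|_p)}d^n\xi$, which is independent of $x$. Using the lower bound $A_{w_\alpha}(\|\xi\|_p)\ge C_0\|\xi\|_p^{\alpha-n}$ from \cite[Lemma~3.4]{Ch-Z}, the problem reduces to bounding $\int e^{-C_0 t\|\xi\|_p^{\alpha-n}}d^n\xi$. To extract the $t$-dependence I would use the $p$-adic scaling trick: choose an integer $m$ with $p^{m-1}\le t^{1/(\alpha-n)}\le p^m$ and change variables $z=p^{-(m-1)}\xi$, after which the integrand becomes a $t$-independent integrable function of $z$ and the Jacobian contributes the factor $p^{-(m-1)n}\asymp t^{-n/(\alpha-n)}$.

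For (ii), the statement is essentially \cite[Lemma~4.1]{Ch-Z}, so I would simply invoke that lemma; the underlying idea there is to exploit the oscillation of $\Psi(x\cdot\xi)$ on shells $\|\xi\|_p=p^j$, where the integral vanishes unless $p^j\le\|x\|_p^{-1}$, leaving a geometric series that produces the decay $t\|x\|_p^{-\alpha}$.

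For (iii), I would do a case split on whether $\|x\|_p\ge t^{1/(\alpha-n)}$ or not. In the first case, $\|x\|_p\ge\tfrac12(\|x\|_p+t^{1/(\alpha-n)})$, so $\|x\|_p^{-\alpha}\le 2^\alpha(\|x\|_p+t^{1/(\alpha-n)})^{-\alpha}$ and combining with (ii) gives the bound. In the second case, $\|x\|_p+t^{1/(\alpha-n)}\le 2t^{1/(\alpha-n)}$, so $(\|x\|_p+t^{1/(\alpha-n)})^{-\alpha}\ge 2^{-\alpha}t^{-\alpha/(\alpha-n)}=2^{-\alpha}t^{-1-n/(\alpha-n)}$, and multiplying by $t$ and using (i) yields the bound. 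Taking the max of the two constants gives the unified estimate.

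For (iv), the bound in (iii) shows $Z_t\in L^1(\mathbb{Q}_p^n)$ for each $t>0$, since $\int(\|x\|_p+t^{1/(\alpha-n)})^{-\alpha}d^nx<\infty$ when $\alpha>n$. Hence the Fourier inversion formula applies to $Z_t$, and evaluating at $\xi=0$ gives $\int_{\mathbb{Q}_p^n}Z(x,t)d^nx=e^{-\kappa t A_{w_\alpha}(0)}=1$, since $A_{w_\alpha}(0)=0$. I expect the main subtlety to lie in the scaling argument for (i), where one must be careful that the $p$-adic substitution respects the ultrametric structure; the other parts are essentially bookkeeping once \cite[Lemma~3.4]{Ch-Z} and \cite[Lemma~4.1]{Ch-Z} are in hand.
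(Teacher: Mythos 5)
Your proposal is correct and follows essentially the same route as the paper: the scaling substitution $z=p^{-(m-1)}\xi$ for (i), the citation of \cite[Lemma 4.1]{Ch-Z} for (ii), the identical case split on $\left\Vert x\right\Vert _{p}$ versus $t^{\frac{1}{\alpha-n}}$ for (iii), and Fourier inversion for (iv). The only addition is your explicit remark that the inversion formula evaluated at $\xi=0$ uses $A_{w_{\alpha}}(0)=0$, which the paper leaves implicit.
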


\begin{proposition}
[\cite{R-Zu}, Proposition 2]\label{Conv} If $b>0$, $0\leq\lambda<\alpha$, and
$x\in\mathbb{Q}_{p}^{n}$, then%
\[
\underset{\mathbb{Q}_{p}^{n}}{\int}\left(  b+\left\Vert x-\xi\right\Vert
_{p}\right)  ^{-\alpha-n}\left\Vert \xi\right\Vert _{p}^{\lambda}d^{n}\xi\leq
Cb^{-\alpha}\left(  1+\left\Vert x\right\Vert _{p}^{\lambda}\right)  ,
\]
where the constant $C$ does not depend on $b$ or $x$.
\end{proposition}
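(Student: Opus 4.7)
The plan is to decompose the domain using the ultrametric inequality so that on each piece the integrand becomes essentially radial, and then evaluate using the standard $p$-adic formula
\[
\int_{\mathbb{Q}_{p}^{n}} f(\|\eta\|_{p})\,d^{n}\eta = (1-p^{-n})\sum_{k\in\mathbb{Z}} p^{kn} f(p^{k}).
\]

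First I would substitute $\eta = x-\xi$, so the integrand becomes $(b+\|\eta\|_{p})^{-\alpha-n}\|x-\eta\|_{p}^{\lambda}$, and then partition on the level $\|\eta\|_{p}=\|x\|_{p}$. On the inner region $\{\|\eta\|_{p}\leq\|x\|_{p}\}$ the ultrametric inequality forces $\|x-\eta\|_{p}\leq\|x\|_{p}$, so I would replace the factor $\|x-\eta\|_{p}^{\lambda}$ by $\|x\|_{p}^{\lambda}$ and pull it out of the integral. On the outer region $\{\|\eta\|_{p}>\|x\|_{p}\}$ the strict ultrametric comparison gives $\|x-\eta\|_{p}=\|\eta\|_{p}$, so the integrand is already radial in $\eta$.

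Next I would evaluate the two resulting radial integrals by splitting the summation index at $k_{0}$ with $p^{k_{0}}\approx b$: for $k\leq k_{0}$ the factor $b+p^{k}$ is comparable to $b$, while for $k>k_{0}$ it is comparable to $p^{k}$; in either case the resulting geometric series converges thanks to $\alpha>n$ and $\lambda<\alpha$. The unweighted integral is thereby bounded by $Cb^{-\alpha}$, which when multiplied by the pulled-out $\|x\|_{p}^{\lambda}$ contributes the $\|x\|_{p}^{\lambda}$-part of the right-hand side.

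The main obstacle is the bookkeeping required to combine the inner and outer contributions into the clean target $Cb^{-\alpha}(1+\|x\|_{p}^{\lambda})$: according as $b$ is smaller than, comparable to, or larger than $\|x\|_{p}$, different terms dominate, and in each regime one must verify that the surplus factors arising from the weighted outer integral are absorbed either into $\|x\|_{p}^{\lambda}$ (when $b\leq\|x\|_{p}$, since then $b^{\lambda-\alpha}\leq\|x\|_{p}^{\lambda}b^{-\alpha}$) or into a universal constant. The individual estimates themselves are routine $p$-adic geometric-series computations; the work lies almost entirely in this case analysis.
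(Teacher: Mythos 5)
Your decomposition (inner region $\|\eta\|_{p}\leq\|x\|_{p}$, where the ultrametric inequality gives $\|x-\eta\|_{p}\leq\|x\|_{p}$; outer region, where $\|x-\eta\|_{p}=\|\eta\|_{p}$; then the radial summation formula with the index split at $p^{k_{0}}\approx b$) is the natural and standard route, and the paper itself offers no proof to compare against: the proposition is only quoted from \cite{R-Zu}. Your inner-region estimate is correct, and the unweighted integral is indeed $O(b^{-\alpha})$ (for this only $\alpha>0$ is needed, which follows from $0\leq\lambda<\alpha$; the condition $\alpha>n$ you invoke is not among the hypotheses and is not required).

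The genuine gap is hidden in the ``bookkeeping'' you defer at the end. In the regime $\|x\|_{p}<b$ the weighted outer integral is of \emph{exact} order $b^{\lambda-\alpha}$: the single shell $\|\eta\|_{p}=p^{k_{0}}$ with $p^{k_{0}-1}<b\leq p^{k_{0}}$ already contributes at least $(1-p^{-n})p^{k_{0}(n+\lambda)}(2p^{k_{0}})^{-\alpha-n}\geq c\,b^{\lambda-\alpha}$. The surplus factor $b^{\lambda}/(1+\|x\|_{p}^{\lambda})$ therefore cannot be absorbed into a universal constant: taking $x=0$, $\lambda>0$ and letting $b\rightarrow\infty$, the left-hand side is bounded below by $c\,b^{\lambda-\alpha}$ while the right-hand side is $Cb^{-\alpha}$, so the stated inequality actually fails for large $b$. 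What is true (and provable exactly along your lines) is the estimate for $b$ restricted to a bounded interval $(0,B]$, with $C$ depending on $B$, or equivalently the estimate with an extra factor $1+b^{\lambda}$ on the right-hand side; this restricted version is all the paper ever uses, since in every application $b=t^{1/(\alpha-n)}$ with $t\in(0,T]$. Your write-up should either prove that version or flag the obstruction explicitly, rather than asserting that the remaining case analysis is routine.
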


\begin{lemma}
\label{u1u2}The functions $u_{1},u_{2}$ belong to $\mathcal{\mathfrak{M}%
}_{\lambda}$ uniformly in $t$, for $\lambda+n<\alpha$.
\end{lemma}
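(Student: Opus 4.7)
The plan is to verify the two defining attributes of $\mathcal{\mathfrak{M}}_{\lambda}$ for $u_{1}$ and $u_{2}$, uniformly in $t\in[0,T]$: a parameter of local constancy that does not depend on $t$, and a pointwise majorization $|u_{i}(x,t)|\leq A(1+\left\Vert x\right\Vert _{p}^{\lambda})$ with $A$ independent of $t$.

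For local constancy, I would exploit the radial symmetry of $Z_{t}$ recorded in Remark \ref{Remark}, which permits the change of variable $y\mapsto x-y$ and the rewriting
\[
u_{1}(x,t)=\int_{\mathbb{Q}_{p}^{n}}Z(y,t)\,\varphi(x-y)\,d^{n}y,\qquad u_{2}(x,t)=\int_{0}^{t}\!\!\int_{\mathbb{Q}_{p}^{n}}Z(y,t-\theta)\,f(x-y,\theta)\,d^{n}y\,d\theta.
\]
Let $l$ be a common parameter of local constancy of $\varphi$ and of $f(\cdot,\theta)$ (the latter being independent of $t$ by hypothesis). For $\left\Vert h\right\Vert _{p}\leq p^{l}$, the local constancy of $\varphi$ and of $f(\cdot,\theta)$ gives $\varphi(x+h-y)=\varphi(x-y)$ and $f(x+h-y,\theta)=f(x-y,\theta)$ pointwise in $y$ and $\theta$; hence $u_{i}(x+h,t)=u_{i}(x,t)$, so $l$ is a parameter of local constancy for $u_{1}$ and $u_{2}$ that is independent of $t$.

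For the growth bound on $u_{1}$, I would combine Lemma \ref{lemmaZ}(iii) with $|\varphi(y)|\leq C(1+\left\Vert y\right\Vert _{p}^{\lambda})$ to obtain
\[
|u_{1}(x,t)|\leq C't\int_{\mathbb{Q}_{p}^{n}}\bigl(\left\Vert x-y\right\Vert _{p}+t^{1/(\alpha-n)}\bigr)^{-\alpha}\bigl(1+\left\Vert y\right\Vert _{p}^{\lambda}\bigr)\,d^{n}y.
\]
Rewriting the exponent as $-\alpha=-(\alpha-n)-n$ puts the integral in the form covered by Proposition \ref{Conv}, applied with parameter $\alpha-n$ (in the role of the $\alpha$ appearing there) and $b=t^{1/(\alpha-n)}$; the admissibility condition $\lambda<\alpha-n$ of the proposition is exactly the standing hypothesis $\lambda+n<\alpha$. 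The proposition then bounds the integral by $C''t^{-1}(1+\left\Vert x\right\Vert _{p}^{\lambda})$, and the explicit factor $t$ in front cancels this, producing a constant times $(1+\left\Vert x\right\Vert _{p}^{\lambda})$ that is uniform in $t$. For $u_{2}$, the same estimate applied pointwise in $\theta$ gives an integrand bounded uniformly by a constant times $(1+\left\Vert x\right\Vert _{p}^{\lambda})$, and integration over $\theta\in[0,t]\subseteq[0,T]$ contributes at most a factor $T$.

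The main obstacle is controlling the $t$-dependence near $t\to 0^{+}$: the bound from Lemma \ref{lemmaZ}(i) alone blows up like $t^{-n/(\alpha-n)}$, while the bound from Lemma \ref{lemmaZ}(ii) is not integrable near $y=x$. The interpolated bound (iii), coupled with Proposition \ref{Conv}, is calibrated precisely so that the prefactor $t$ cancels the $t^{-1}$ produced by the convolution estimate, and this cancellation is the structural reason behind the strict inequality $\lambda+n<\alpha$.
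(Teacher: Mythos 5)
Your proposal is correct and follows essentially the same route as the paper: the convolution form $u_{1}(x,t)=\int Z(w,t)\varphi(x-w)\,d^{n}w$ for the $t$-independent local constancy, and Lemma \ref{lemmaZ}(iii) combined with Proposition \ref{Conv} (with $b=t^{1/(\alpha-n)}$, so that the prefactor $t$ cancels the $b^{-(\alpha-n)}=t^{-1}$ from the convolution estimate) for the growth bound. Your explicit remark on why the interpolated bound (iii) is needed, rather than (i) or (ii) alone, makes visible a cancellation the paper leaves implicit.
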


\begin{proof}
By Lemma \ref{lemmaZ} (iii), and Proposition \ref{Conv},%
\begin{align*}
\left\vert u_{1}(x,t)\right\vert  &  \leq\underset{\mathbb{Q}_{p}^{n}}{\int
}Z(x-y,t)\left\vert \varphi(y)\right\vert d^{n}y\leq C\underset{\mathbb{Q}%
_{p}^{n}}{\int}t\left(  t^{\frac{1}{\alpha-n}}+\left\Vert x-y\right\Vert
_{p}\right)  ^{-\alpha}\left(  1+\left\Vert y\right\Vert _{p}^{\lambda
}\right)  d^{n}y\\
&  \leq C^{\prime}\left(  1+\left\Vert x\right\Vert _{p}^{\lambda}\right)  .
\end{align*}
On the other hand, since%
\[
u_{1}(x,t)=\underset{\mathbb{Q}_{p}^{n}}{\int}Z(w,t)\varphi(x-w)d^{n}w,
\]
$u_{1}$ is locally constant and $l\left(  u_{1}\right)  =l\left(
\varphi\right)  $\ uniformly in $t$. The proof for $u_{2}$ is similar.
\end{proof}

\begin{remark}
\label{nota2}Notice that $u_{1}$, $u_{2}$, $\boldsymbol{W}_{\gamma}u_{1}$,
$\boldsymbol{W}_{\gamma}u_{2}$ $\in\mathcal{\mathfrak{M}}_{\lambda}$, for any
$\gamma$ satisfying $\lambda+n<\gamma\leq\alpha$.
\end{remark}

\subsection{\textbf{Claim} $u(x,t)$ satisfies the initial condition}

This claim follows from Le\-mma \ref{u1u2} by using the following result.

\begin{lemma}
\label{CI}If $\varphi\in\mathcal{\mathfrak{M}}_{\lambda}$, with $\alpha
>\lambda+n$, then%
\[
\underset{t\rightarrow0^{+}}{\lim}\underset{\mathbb{Q}_{p}^{n}}{\int}%
Z(x-\xi,t)\varphi(\xi)d^{n}\xi=\varphi(x).
\]

\end{lemma}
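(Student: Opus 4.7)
The plan is to use the standard approximation-of-the-identity argument, exploiting that $Z_t$ has total mass $1$ and concentrates sharply near the origin as $t\to 0^+$.

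First I would subtract $\varphi(x)$ inside the integral using Lemma \ref{lemmaZ}(iv), writing
\[
\int_{\mathbb{Q}_p^n} Z(x-\xi,t)\varphi(\xi)\, d^n\xi - \varphi(x) = \int_{\mathbb{Q}_p^n} Z(x-\xi,t)\bigl(\varphi(\xi)-\varphi(x)\bigr)\, d^n\xi.
\]
Let $l=l(\varphi)\in\mathbb{Z}$ be the parameter of local constancy of $\varphi$. On the ball $\|x-\xi\|_p\le p^l$, local constancy gives $\varphi(\xi)=\varphi(x)$, so the integrand vanishes. Thus it suffices to show that
\[
I(x,t):=\int_{\|x-\xi\|_p>p^l} Z(x-\xi,t)\bigl|\varphi(\xi)-\varphi(x)\bigr|\, d^n\xi \xrightarrow[t\to 0^+]{} 0.
\]

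Next I would apply the bound $Z(y,t)\le C_2 t \,\|y\|_p^{-\alpha}$ from Lemma \ref{lemmaZ}(ii), together with the $\mathfrak{M}_\lambda$-estimate $|\varphi(\xi)-\varphi(x)|\le C(2+\|\xi\|_p^\lambda+\|x\|_p^\lambda)$. After changing variables $y=\xi-x$ and using the ultrametric inequality $\|x+y\|_p^\lambda\le \|x\|_p^\lambda+\|y\|_p^\lambda$, I obtain
\[
I(x,t)\le C'\, t \int_{\|y\|_p>p^l} \frac{2+2\|x\|_p^\lambda+\|y\|_p^\lambda}{\|y\|_p^\alpha}\, d^n y.
\]
Because $\alpha>\lambda+n$, both $\int_{\|y\|_p>p^l}\|y\|_p^{-\alpha}d^n y$ and $\int_{\|y\|_p>p^l}\|y\|_p^{\lambda-\alpha}d^n y$ are finite (a direct computation on spheres gives the bound explicitly in terms of $p$, $l$, $\alpha$, $n$, $\lambda$). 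Hence $I(x,t)\le C''\,t\,(1+\|x\|_p^\lambda)$, which tends to $0$ as $t\to 0^+$ for each fixed $x$.

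There is no serious obstacle here: the tail bound from Lemma \ref{lemmaZ}(ii) is exactly the decay needed to absorb the polynomial growth of $\varphi$, provided $\alpha-n>\lambda$, and the local constancy of $\varphi$ handles the singular region where $Z$ is large. The only item requiring mild care is the use of $|\varphi(\xi)|\le C(1+\|\xi\|_p^\lambda)$ in conjunction with the ultrametric triangle inequality to separate the $\xi$-dependence from the $x$-dependence before integrating.
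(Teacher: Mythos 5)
Your proposal is correct and follows essentially the same route as the paper: subtract $\varphi(x)$ using the unit mass of $Z_t$ (Lemma \ref{lemmaZ}(iv)), use local constancy to discard the ball $\left\Vert x-\xi\right\Vert _{p}\leq p^{l}$, and bound the remaining tail by $Ct\left\Vert z\right\Vert _{p}^{-\alpha}(1+\left\Vert x-z\right\Vert _{p}^{\lambda})$, which is integrable since $\alpha-n>\lambda$ and yields a bound of the form $t\,h(x)\rightarrow 0$. The only cosmetic difference is that the paper invokes the combined estimate of Lemma \ref{lemmaZ}(iii) while you use (ii) directly, but on the region $\left\Vert z\right\Vert _{p}\geq p^{l}$ these give the same bound up to constants.
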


\begin{proof}
By Lemma \ref{lemmaZ} (iv),%
\begin{equation}
\underset{\mathbb{Q}_{p}^{n}}{\int}Z(x-\xi,t)\varphi(\xi)d^{n}\xi
=\underset{\mathbb{Q}_{p}^{n}}{\int}Z(x-\xi,t)\left[  \varphi(\xi
)-\varphi(x)\right]  d^{n}\xi+\varphi(x). \label{phi-phi}%
\end{equation}

Now, by Lemma \ref{lemmaZ} (iii) and the local constancy of $\varphi$,
\begin{multline*}
\underset{\mathbb{Q}_{p}^{n}}{\int}Z(x-\xi,t)\left[  \varphi(\xi
)-\varphi(x)\right]  d^{n}\xi\leq\underset{\left\Vert x-\xi\right\Vert
_{p}\geq p^{l}}{Ct\int}(t^{\frac{1}{\alpha-n}}+\left\Vert x-\xi\right\Vert
_{p})^{-\alpha}\left\vert \varphi(\xi)-\varphi(x)\right\vert d^{n}\xi\\
\leq\underset{\left\Vert z\right\Vert _{p}\geq p^{l}}{Ct\int}(t^{\frac
{1}{\alpha-n}}+\left\Vert z\right\Vert _{p})^{-\alpha}\left\vert
\varphi(x-z)-\varphi(x)\right\vert d^{n}z\\
\leq\underset{\left\Vert z\right\Vert _{p}\geq p^{l}}{Ct\int}\left\Vert
z\right\Vert _{p}^{-\alpha}(1+\left\Vert x-z\right\Vert ^{\lambda}%
)d^{n}z+C^{\prime}t\left\vert \varphi(x)\right\vert \leq th\left(  x\right)  .
\end{multline*}
The formula is obtained by taking limit when $t\rightarrow0^{+}$ in
(\ref{phi-phi}).
\end{proof}

\subsection{Claim $u(x,t)$ is a solution of Cauchy problem (\ref{Cauchy1})}

The proof of this claim is a consequence of Corollary \ref{coro1}, Lemmas
\ref{u1} and \ref{u2}.\ Several preliminary results are required.

\begin{lemma}
\label{lemmaZ'} There exist positive constants$\ C_{3},C_{4}$ such that
$Z(x,t)$ \ satisfies the following conditions:

(i) $\frac{\partial Z(x,t)}{\partial t}=-\kappa%
%TCIMACRO{\tint \nolimits_{\mathbb{Q}_{p}^{n}}}%
%BeginExpansion
{\textstyle\int\nolimits_{\mathbb{Q}_{p}^{n}}}
%EndExpansion
A_{w_{\alpha}}(\left\Vert \xi\right\Vert _{p})e^{-\kappa tA_{w_{\alpha}%
}(\left\Vert \xi\right\Vert _{p})}\Psi(x\cdot\xi)d^{n}\xi$, for $t>0$ and
$\ x\in\mathbb{Q}_{p}^{n}$;

(ii) $\left\vert \frac{\partial Z(x,t)}{\partial t}\right\vert \leq
C_{3}t^{-\frac{\alpha}{\alpha-n}}$, for $t>0$ and $\ x\in\mathbb{Q}_{p}^{n}$;

(iii) $\left\vert \frac{\partial Z(x,t)}{\partial t}\right\vert \leq
C_{4}t\left\Vert x\right\Vert _{p}^{n-2\alpha}$, for $t>0$ and $\ x\in
\mathbb{Q}_{p}^{n}\backslash\{0\}$;

(iv) $\left\vert \frac{\partial Z(x,t)}{\partial t}\right\vert \leq2^{\alpha
}C_{3}\left(  \left\Vert x\right\Vert _{p}+t^{\frac{1}{\alpha-n}}\right)
^{-\alpha}$, for $t>0$ and $x\in\mathbb{Q}_{p}^{n}\setminus\{0\}$.

\begin{proof}
(i) The\ formula is obtained by the Lebesgue \ Dominated Convergence Theorem,
and the fact that\ $-\kappa A_{w_{\alpha}}(\left\Vert \xi\right\Vert
_{p})e^{-\kappa\tau A_{w_{\alpha}}(\left\Vert \xi\right\Vert _{p})}\Psi
(x\cdot\xi)\in L^{1}(\mathbb{Q}_{p}^{n})$, for $\tau>0$ fixed,\ cf.
\cite[Lemma 3.4]{Ch-Z}.\ (ii) By using (i) and Lemma 3.4 in \cite{Ch-Z},%
\[
\left\vert \frac{\partial Z(x,t)}{\partial t}\right\vert \leq\underset
{\mathbb{Q}_{p}^{n}}{\int}C_{1}\left\Vert \xi\right\Vert _{p}^{\alpha
-n}e^{-\kappa C_{2}t\left\Vert \xi\right\Vert _{p}^{\alpha-n}}d^{n}\xi.
\]
We now pick an integer $m$ such that $\ p^{m-1}\leq t^{\frac{1}{\alpha-n}}\leq
p^{m}$, and proceed as in the proof of Lemma \ref{lemmaZ} (i), to obtain%
\[
\left\vert \frac{\partial Z(x,t)}{\partial t}\right\vert \leq C_{1}%
p^{-(m-1)n-(m-1)(\alpha-n)}\underset{\mathbb{Q}_{p}^{n}}{\int}\left\Vert
z\right\Vert _{p}^{\alpha-n}e^{-\kappa C_{2}t\left\Vert z\right\Vert
_{p}^{\alpha-n}}d^{n}z\leq C_{3}t^{-\frac{\alpha}{\alpha-n}}.
\]
(iii) Set $\left\Vert x\right\Vert _{p}=p^{\beta}$. Now, since\ $A_{w_{\alpha
}}(\left\Vert \xi\right\Vert _{p})e^{-\kappa tA_{w_{\alpha}}(\left\Vert
\xi\right\Vert _{p})}\in L^{1}\cap L^{2}$ for $t>0$, then \ $\frac{\partial
Z(x,t)}{\partial t}\in L^{1}\cap L^{2}$\ for $t>0$, and by applying the
formula \ for the Fourier Transform of a radial function, we get
\begin{multline*}
\frac{\partial Z(x,t)}{\partial t}=\left\Vert x\right\Vert _{p}^{-n}\\
\times\left(  (1-p^{-n})\sum_{j=0}^{\infty}A_{w_{\alpha}}(p^{-\beta
-j})e^{-\kappa tA_{w_{\alpha}}(p^{-\beta-j})}p^{-nj}-A_{w_{\alpha}}%
(p^{-\beta+1})e^{-\kappa tA_{w_{\alpha}}(p^{-\beta+1})}\right)  .
\end{multline*}
Now, by using that\ $A_{w_{\alpha}}(\xi)$ is a decreasing function of
$ord(\xi)$,
\begin{align*}
\left\vert \frac{\partial Z(x,t)}{\partial t}\right\vert  &  \leq\left\Vert
x\right\Vert _{p}^{-n}A_{w_{\alpha}}(p^{-\beta+1})\left\vert (1-p^{-n}%
)\sum_{j=0}^{\infty}p^{-nj}-e^{-\kappa tA_{w_{\alpha}}(p^{-\beta+1}%
)}\right\vert \\
&  \leq\left\Vert x\right\Vert _{p}^{-n}A_{w_{\alpha}}(p^{-\beta+1})\left(
1-e^{-\kappa tA_{w_{\alpha}}(p^{-\beta+1})}\right)
\end{align*}
By using Mean Value Theorem and Lemma 3.4 in \cite{Ch-Z}, we have%
\[
\left\vert \frac{\partial Z(x,t)}{\partial t}\right\vert \leq C_{4}\left\Vert
x\right\Vert _{p}^{n-2\alpha}t.
\]
(iv) If $\left\Vert x\right\Vert _{p}\leq t^{\frac{1}{\alpha-n}}$,
then\ $\frac{\left\Vert x\right\Vert _{p}}{2}+\frac{t^{\frac{1}{\alpha-n}}}%
{2}\leq t^{\frac{1}{\alpha-n}}$\ and\ $t^{\frac{-\alpha}{\alpha-n}}%
\leq2^{\alpha}\left(  \left\Vert x\right\Vert _{p}+t^{\frac{1}{\alpha-n}%
}\right)  ^{-\alpha}$, multiplying by $\ C_{3}$ and using (ii), we have
\[
\left\vert \frac{\partial Z(x,t)}{\partial t}\right\vert \leq2^{\alpha}%
C_{3}\left(  \left\Vert x\right\Vert _{p}+t^{\frac{1}{\alpha-n}}\right)
^{-\alpha}.
\]
Now, if $\left\Vert x\right\Vert _{p}\geq t^{\frac{1}{\alpha-n}}$, by using
(iii),
\begin{equation}
\left\vert \frac{\partial Z(x,t)}{\partial t}\right\vert \leq C_{3}\left\Vert
x\right\Vert _{p}^{-\alpha}, \label{Z'<X}%
\end{equation}
and since$\left\Vert x\right\Vert _{p}\geq t^{\frac{1}{\alpha-n}}$,
then$\ \left\Vert x\right\Vert _{p}\geq\left(  \frac{\left\Vert x\right\Vert
_{p}}{2}+\frac{t^{\frac{1}{\alpha-n}}}{2}\right)  \ $and $2^{\alpha}\left(
\left\Vert x\right\Vert _{p}+t^{\frac{1}{\alpha-n}}\right)  ^{-\alpha}%
\geq\left\Vert x\right\Vert _{p}^{-\alpha}$, multiplying by $C_{3}$ and using
(\ref{Z'<X}), we have\
\[
\left\vert \frac{\partial Z(x,t)}{\partial t}\right\vert \leq2^{\alpha}%
C_{3}\left(  \left\Vert x\right\Vert _{p}+t^{\frac{1}{\alpha-n}}\right)
^{-\alpha}.
\]

\end{proof}
\end{lemma}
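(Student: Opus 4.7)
The plan is to follow the structure of Lemma \ref{lemmaZ}: establish (i) by differentiation under the integral, derive the uniform bound (ii) by a discrete rescaling, obtain the decay estimate (iii) via the explicit formula for the Fourier transform of a radial function, and finally combine (ii) and (iii) with a case split to produce (iv).

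For (i), since $A_{w_\alpha}(\|\xi\|_p) \asymp \|\xi\|_p^{\alpha-n}$ by \cite[Lemma 3.4]{Ch-Z}, the integrand $\kappa A_{w_\alpha}(\|\xi\|_p) e^{-\kappa t A_{w_\alpha}(\|\xi\|_p)} \Psi(x\cdot\xi)$ is dominated uniformly for $t$ in a neighborhood of any $t_0 > 0$ by an integrable function, and the Dominated Convergence Theorem legitimizes differentiation under the integral sign in (\ref{Z}). For (ii), I would take absolute values (exploiting $|\Psi| = 1$) together with the two-sided bound on $A_{w_\alpha}$ to reduce matters to estimating
\[
\int_{\mathbb{Q}_p^n} \|\xi\|_p^{\alpha-n} e^{-c t \|\xi\|_p^{\alpha-n}}\, d^n\xi,
\]
and then rescale as in Lemma \ref{lemmaZ}(i): choose $m \in \mathbb{Z}$ with $p^{m-1} \leq t^{1/(\alpha-n)} \leq p^m$ and substitute $\xi = p^{m-1} z$. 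The Jacobian $p^{-(m-1)n}$ produces the factor $t^{-n/(\alpha-n)}$, while the additional $\|\xi\|_p^{\alpha-n}$ yields $p^{-(m-1)(\alpha-n)} \leq t^{-1}$, giving together the exponent $-\alpha/(\alpha-n)$.

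The hardest step will be (iii), which demands both polynomial decay in $\|x\|_p$ and an additional factor of $t$. I would invoke the explicit formula for the Fourier transform of a radial function on $\mathbb{Q}_p^n$: with $\|x\|_p = p^\beta$, the integral in (i) equals
\[
\|x\|_p^{-n}\Bigl((1-p^{-n})\sum_{j=0}^\infty A_{w_\alpha}(p^{-\beta-j}) e^{-\kappa t A_{w_\alpha}(p^{-\beta-j})} p^{-nj} - A_{w_\alpha}(p^{-\beta+1}) e^{-\kappa t A_{w_\alpha}(p^{-\beta+1})}\Bigr).
\]
Using that $A_{w_\alpha}$ is decreasing in $ord(\xi)$ (so $A_{w_\alpha}(p^{-\beta-j}) \leq A_{w_\alpha}(p^{-\beta+1})$ for $j \geq 0$) and that $(1-p^{-n})\sum_{j\geq 0} p^{-nj} = 1$, the bracket is bounded in absolute value by $A_{w_\alpha}(p^{-\beta+1})(1 - e^{-\kappa t A_{w_\alpha}(p^{-\beta+1})})$. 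Applying the Mean Value Theorem in the form $1 - e^{-s} \leq s$ introduces the missing factor of $\kappa t A_{w_\alpha}(p^{-\beta+1})$, and the estimate $A_{w_\alpha}(p^{-\beta+1}) \leq C \|x\|_p^{-(\alpha-n)}$ together with the prefactor $\|x\|_p^{-n}$ delivers $|\partial_t Z(x,t)| \leq C t \|x\|_p^{n-2\alpha}$.

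Finally, (iv) follows by the two-case strategy of Lemma \ref{lemmaZ}(iii). When $\|x\|_p \leq t^{1/(\alpha-n)}$ one has $(\|x\|_p + t^{1/(\alpha-n)})^{-\alpha} \geq 2^{-\alpha} t^{-\alpha/(\alpha-n)}$, so (ii) directly gives the claim; when $\|x\|_p \geq t^{1/(\alpha-n)}$, estimate (iii) combined with $t \|x\|_p^{n-\alpha} \leq 1$ yields $|\partial_t Z| \leq C \|x\|_p^{-\alpha}$, and then $(\|x\|_p + t^{1/(\alpha-n)})^{-\alpha} \geq 2^{-\alpha}\|x\|_p^{-\alpha}$ closes the argument.
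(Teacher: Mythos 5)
Your proposal follows the paper's own proof essentially step for step: differentiation under the integral via dominated convergence for (i), the dyadic rescaling $p^{m-1}\leq t^{1/(\alpha-n)}\leq p^{m}$ for (ii), the radial Fourier-transform formula combined with the monotonicity of $A_{w_{\alpha}}$ and the bound $1-e^{-s}\leq s$ for (iii), and the two-case comparison of $\left\Vert x\right\Vert _{p}$ with $t^{1/(\alpha-n)}$ for (iv). No substantive differences; your version of (iv) even makes explicit the factor $t\left\Vert x\right\Vert _{p}^{n-\alpha}\leq 1$ that the paper leaves implicit.
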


\begin{lemma}
\label{lemmaWZ}$\left(  \mathbf{W}_{\gamma}Z_{t}\right)  (x)$, with
$\gamma\leq\alpha$, satisfies the following conditions:

(i) $\left(  \mathbf{W}_{\gamma}Z_{t}\right)  (x)=-%
%TCIMACRO{\tint \nolimits_{\mathbb{Q}_{p}^{n}}}%
%BeginExpansion
{\textstyle\int\nolimits_{\mathbb{Q}_{p}^{n}}}
%EndExpansion
A_{w_{\gamma}}(\left\Vert \xi\right\Vert _{p})e^{-\kappa tA_{w_{\alpha}%
}(\left\Vert \xi\right\Vert _{p})}\Psi(x\cdot\xi)d^{n}\xi$, for $t>0$ and
$x\in\mathbb{Q}_{p}^{n}$;

(ii) $\left\vert \left(  \boldsymbol{W}_{\gamma}Z_{t}\right)  (x)\right\vert
\leq2^{\gamma}C\left(  \left\Vert x\right\Vert _{p}+t^{\frac{1}{\alpha-n}%
}\right)  ^{-\gamma}$, for $t>0$ and $x\in\mathbb{Q}_{p}^{n}$ and some
positive constant $C$;

(iii) $%
%TCIMACRO{\tint \nolimits_{\mathbb{Q}_{p}^{n}}}%
%BeginExpansion
{\textstyle\int\nolimits_{\mathbb{Q}_{p}^{n}}}
%EndExpansion
\left(  \boldsymbol{W}_{\gamma}Z_{t}\right)  (x)d^{n}x=0.$
\end{lemma}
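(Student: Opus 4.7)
The plan is to prove (i) by a Fourier-multiplier calculation, derive (ii) by combining a uniform-in-$x$ bound with a spatial-decay bound via the standard dichotomy already used in Lemma \ref{lemmaZ}(iii), and deduce (iii) directly from (i) evaluated at $\xi=0$.

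For (i), the key observation is that $\boldsymbol{W}_{\gamma}$ acts as a Fourier multiplier with symbol $-A_{w_{\gamma}}(\|\xi\|_{p})$; namely, using $\chi_{p}((x-y)\cdot\xi)-\chi_{p}(x\cdot\xi)=\chi_{p}(x\cdot\xi)(\chi_{p}(-y\cdot\xi)-1)$, I would substitute the Fourier representation (\ref{Z}) of $Z_{t}$ into $Z_{t}(x-y)-Z_{t}(x)$ and exchange the two integrals. The product $A_{w_{\gamma}}(\|\xi\|_{p})\,e^{-\kappa tA_{w_{\alpha}}(\|\xi\|_{p})}$ lies in $L^{1}(\mathbb{Q}_{p}^{n})$ for every $t>0$, because $A_{w_{\gamma}}(\|\xi\|_{p})\leq C\|\xi\|_{p}^{\gamma-n}$ while $A_{w_{\alpha}}(\|\xi\|_{p})\geq C'\|\xi\|_{p}^{\alpha-n}$ (Lemma 3.4 of \cite{Ch-Z}), so the exponential dominates the polynomial. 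To justify the interchange I would first restrict to $\|y\|_{p}\geq p^{\ell}$ as in (\ref{W1}), integrate $\xi$ over $p^{-N}\leq\|\xi\|_{p}\leq p^{N}$ where everything is absolutely convergent, and pass to the limit $N\to\infty$ using the $L^{1}$-majorant above. This produces formula (i).

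For (ii), taking absolute values in (i) and substituting $\xi=p^{-(m-1)}z$ with $p^{m-1}\leq t^{1/(\alpha-n)}\leq p^{m}$, exactly as in the proof of Lemma \ref{lemmaZ}(i), gives the uniform bound
\[
|(\boldsymbol{W}_{\gamma}Z_{t})(x)|\leq\int_{\mathbb{Q}_{p}^{n}}A_{w_{\gamma}}(\|\xi\|_{p})\,e^{-\kappa tA_{w_{\alpha}}(\|\xi\|_{p})}d^{n}\xi\leq C\,t^{-\gamma/(\alpha-n)}.
\]
For the spatial-decay bound at $x\neq 0$ with $\|x\|_{p}=p^{\beta}$, I would apply to (i) the formula for the Fourier transform of a radial function used in the proof of Lemma \ref{lemmaZ'}(iii), obtaining a representation of the form
\[
(\boldsymbol{W}_{\gamma}Z_{t})(x)=-\|x\|_{p}^{-n}\!\left[(1-p^{-n})\!\sum_{j=0}^{\infty}A_{w_{\gamma}}(p^{-\beta-j})e^{-\kappa tA_{w_{\alpha}}(p^{-\beta-j})}p^{-nj}-A_{w_{\gamma}}(p^{-\beta+1})e^{-\kappa tA_{w_{\alpha}}(p^{-\beta+1})}\right]\!,
\]
then bound each term using $A_{w_{\gamma}}(r)\leq C_{1}r^{\gamma-n}$ and $e^{-\kappa tA_{w_{\alpha}}}\leq 1$; the resulting geometric series converges thanks to $\gamma>n$ and yields $|(\boldsymbol{W}_{\gamma}Z_{t})(x)|\leq C\|x\|_{p}^{-\gamma}$. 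Splitting into the cases $\|x\|_{p}\leq t^{1/(\alpha-n)}$ and $\|x\|_{p}\geq t^{1/(\alpha-n)}$ exactly as in Lemma \ref{lemmaZ}(iii) combines these two estimates into (ii).

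For (iii), the estimate (ii) already shows $\boldsymbol{W}_{\gamma}Z_{t}\in L^{1}(\mathbb{Q}_{p}^{n})$, so integrating (i) and applying Fubini gives $\int_{\mathbb{Q}_{p}^{n}}(\boldsymbol{W}_{\gamma}Z_{t})(x)\,d^{n}x=-A_{w_{\gamma}}(0)\,e^{-\kappa tA_{w_{\alpha}}(0)}=0$, since $A_{w_{\gamma}}(0)=0$ by Lemma 3.2 of \cite{Ch-Z}. I expect the main obstacle to be the Fubini step in (i): the joint integrand in $(y,\xi)$ is not absolutely integrable, so the truncate-and-pass-to-the-limit argument must be executed carefully; once (i) is established, parts (ii) and (iii) are essentially a transcription of the arguments already carried out for $Z_{t}$ and $\partial_{t}Z_{t}$.
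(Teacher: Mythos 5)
Your proposal is correct and follows essentially the same route as the paper: part (i) via truncating the frequency integral (the paper's $Z_t^{(M)}$) and passing to the limit, part (ii) by combining the uniform bound $Ct^{-\gamma/(\alpha-n)}$ with a spatial-decay bound from the radial Fourier-transform formula and the usual dichotomy, and part (iii) by Fourier inversion. The only minor deviations are that in (ii) you bound the radial series directly by $C\|x\|_p^{-\gamma}$ using $e^{-\kappa tA_{w_\alpha}}\le 1$ where the paper first derives the intermediate estimate $Ct\|x\|_p^{n-\alpha-\gamma}$ via the Mean Value Theorem (both suffice here), and in (i) the truncation of the $y$-integral must be taken at $\|y\|_p\ge\|x\|_p$ for each fixed $x\ne 0$ (with the point $x=0$ handled by continuity of the right-hand side) rather than at a fixed radius $p^{\ell}$, since $Z_t$ is locally constant only away from the origin.
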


\begin{proof}
(i) Define
\begin{equation}
Z_{t}^{(M)}(x)=\underset{\left\Vert \eta\right\Vert _{p}\leq p^{M}}{\int}%
\Psi(x\cdot\eta)e^{-\kappa tA_{w_{\alpha}}(\left\Vert \eta\right\Vert _{p}%
)}d^{n}\eta\text{,\ for\ }M\in\mathbb{N}\text{.} \label{Z(M)}%
\end{equation}
This function is locally constant on $\mathbb{Q}_{p}^{n}$. Indeed, if
$\left\Vert \xi\right\Vert _{p}\leq p^{-M},$ then $Z_{t}^{(M)}(x+\xi
)=Z_{t}^{(M)}(x)$. Furthermore, $Z_{t}^{(M)}(x)$ is bounded, and thus
$Z_{t}^{(M)}(x)\in\mathcal{\mathfrak{M}}_{0}\subset$ $Dom(\boldsymbol{W}%
_{\gamma})$. We now use formula (\ref{W}) and Fubini's Theorem to
compute$\ (\mathbf{W}_{\gamma}Z_{t}^{(M)})(x)$ as follows:
\begin{align*}
(\mathbf{W}_{\gamma}Z_{t}^{(M)})(x)  &  =\underset{\mathbb{Q}_{p}^{n}}{\int
}\frac{Z_{t}^{(M)}(x-\xi)-Z_{t}^{(M)}(x)}{w_{\gamma}(\left\Vert \xi\right\Vert
_{p})}d^{n}\xi\\
&  =\underset{\left\Vert \xi\right\Vert _{p}>p^{-M}}{\int}\text{ }%
\underset{\left\Vert \eta\right\Vert _{p}\leq p^{M}}{\int}e^{-\kappa
tA_{w_{\alpha}}(\left\Vert \eta\right\Vert _{p})}\Psi(x\cdot\eta)\frac{\left(
\Psi(\xi\cdot\eta)-1\right)  }{w_{\gamma}(\left\Vert \xi\right\Vert _{p}%
)}d^{n}\eta d^{n}\xi\\
&  =\underset{\left\Vert \eta\right\Vert _{p}\leq p^{M}}{\int}e^{-\kappa
tA_{w_{\alpha}}(\left\Vert \eta\right\Vert _{p})}\Psi(x\cdot\eta
)\underset{\left\Vert \xi\right\Vert _{p}>p^{-M}}{\int}\frac{\left(  \Psi
(\xi\cdot\eta)-1\right)  }{w_{\gamma}(\left\Vert \xi\right\Vert _{p})}d^{n}\xi
d^{n}\eta\\
&  =-\underset{\left\Vert \eta\right\Vert _{p}\leq p^{M}}{\int}e^{-\kappa
tA_{w_{\alpha}}(\left\Vert \eta\right\Vert _{p})}\Psi(x\cdot\eta)A_{w_{\gamma
}}(\left\Vert \eta\right\Vert _{p})d^{n}\eta.
\end{align*}
By using that \ $e^{-\kappa tA_{w_{\alpha}}(\left\Vert \xi\right\Vert _{p}%
)}A_{w_{\gamma}}(\left\Vert \xi\right\Vert _{p})\in L^{1}(\mathbb{Q}_{p}^{n})$
for $t>0$, cf. \ \cite[Lemma 3.4]{Ch-Z} and the Dominated Convergence Theorem,
we obtain
\begin{equation}
\lim_{M\rightarrow\infty}(\mathbf{W}_{\gamma}Z_{t}^{(M)})(x)=-\underset
{\mathbb{Q}_{p}^{n}}{\int}A_{w_{\gamma}}(\left\Vert \eta\right\Vert
_{p})e^{-\kappa tA_{w_{\alpha}}(\left\Vert \eta\right\Vert _{p})}\Psi
(x\cdot\eta)d^{n}\eta. \label{WZ(M)}%
\end{equation}
On the other hand, by fixing $x\neq0$ and for $t>0$, $Z_{t}(x-\xi)-Z_{t}(x)$
is locally constant, cf. Remark \ref{Remark}, and bounded, cf. Lemma
\ref{lemmaZ} (iii), then $(\mathbf{W}_{\gamma}Z_{t})(x)$ is well-defined, and
since $Z_{t}^{(M)}\left(  x\right)  $ is radial,
\[
(\mathbf{W}_{\gamma}Z_{t}^{(M)})(x)=\underset{\left\Vert \xi\right\Vert
_{p}>\left\Vert x\right\Vert _{p}}{\int}\frac{Z_{t}^{(M)}(x-\xi)-Z_{t}%
^{(M)}(x)}{w_{\gamma}(\left\Vert \xi\right\Vert _{p})}d^{n}\xi,
\]
and by Dominated Convergence Theorem, $\lim_{M\rightarrow\infty}%
(\mathbf{W}_{\gamma}Z_{t}^{(M)})(x)=(\mathbf{W}_{\gamma}Z_{t})(x)$. Therefore
by (\ref{WZ(M)}), we have%
\[
(\mathbf{W}_{\gamma}Z_{t})(x)=-\underset{\mathbb{Q}_{p}^{n}}{\int}%
A_{w_{\gamma}}(\left\Vert \eta\right\Vert _{p})e^{-\kappa tA_{w_{\alpha}%
}(\left\Vert \eta\right\Vert _{p})}\Psi(x\cdot\eta)d^{n}\eta.
\]
Finally,\ we note the right-hand side in the above formula is continuous at
\ $x=0$.

(ii) By (i) and Lemma 3.4 \ in \ \cite{Ch-Z},%
\[
\left\vert (\mathbf{W}_{\gamma}Z_{t})(x)\right\vert \leq C_{0}\underset
{\mathbb{Q}_{p}^{n}}{\int}\left\Vert \xi\right\Vert _{p}^{\gamma-n}e^{-\kappa
C_{1}t\left\Vert \xi\right\Vert _{p}^{\alpha-n}}d^{n}\xi.
\]
We now pick\ an integer $m$ such that $\ p^{m-1}\leq t^{\frac{1}{\alpha-n}%
}\leq p^{m}$, and proceed as in the proof of Lemma \ref{lemmaZ} (i), to
obtain
\begin{equation}
\left\vert (\mathbf{W}_{\gamma}Z_{t})(x)\right\vert \leq Ct^{-\frac{\gamma
}{\alpha-n}}. \label{Wz<ct}%
\end{equation}
Now, \ if $\ \left\Vert x\right\Vert _{p}\leq t^{\frac{1}{\alpha-n}},$ then
\ $\frac{\left\Vert x\right\Vert _{p}}{2}+\frac{t^{\frac{1}{\alpha-n}}}{2}\leq
t^{\frac{1}{\alpha-n}}$ and \ $t^{\frac{-\gamma}{\alpha-n}}\leq2^{\gamma
}\left(  \left\Vert x\right\Vert _{p}+t^{\frac{1}{\alpha-n}}\right)
^{-\gamma}$, multiplying by $\ C$\ and \ by using (\ref{Wz<ct}), we have
\[
\left\vert (\mathbf{W}_{\gamma}Z_{t})(x)\right\vert \leq2^{\gamma}C\left(
\left\Vert x\right\Vert _{p}+t^{\frac{1}{\alpha-n}}\right)  ^{-\gamma}.
\]
On the other hand, let $\left\Vert x\right\Vert _{p}=p^{\beta}$, since
\ $A_{w_{\gamma}}(\left\Vert \xi\right\Vert _{p})e^{-\kappa tA_{w_{\alpha}%
}(\left\Vert \xi\right\Vert _{p})}\in L^{1}\cap L^{2}$ for $t>0$, then
\ $(\mathbf{W}_{\gamma}Z_{t})(x)\in L^{1}\cap L^{2}$\ for $t>0$, by proceeding
as in the proof of Lemma \ref{lemmaZ'} (iii), we obtain%
\[
\left\vert (\mathbf{W}_{\gamma}Z_{t})(x)\right\vert \leq Ct\left\Vert
x\right\Vert _{p}^{n-\alpha-\gamma}.
\]
Now, if $\left\Vert x\right\Vert _{p}\geq t^{\frac{1}{\alpha-n}}$, then
\begin{equation}
\left\vert (\mathbf{W}_{\gamma}Z_{t})(x)\right\vert \leq C\left\Vert
x\right\Vert _{p}^{-\gamma}. \label{WZ<CX}%
\end{equation}
If $\left\Vert x\right\Vert _{p}\geq t^{\frac{1}{\alpha-n}},$ then
$\ \left\Vert x\right\Vert _{p}\geq\left(  \frac{\left\Vert x\right\Vert _{p}%
}{2}+\frac{t^{\frac{1}{\alpha-n}}}{2}\right)  $ and $2^{\gamma}\left(
\left\Vert x\right\Vert _{p}+t^{\frac{1}{\alpha-n}}\right)  ^{-\gamma}%
\geq\left\Vert x\right\Vert _{p}^{-\gamma}$, multiplying by $C$ and using
(\ref{WZ<CX}), we have\
\[
\left\vert (\mathbf{W}_{\gamma}Z_{t})(x)\right\vert \leq2^{\gamma}C\left(
\left\Vert x\right\Vert _{p}+t^{\frac{1}{\alpha-n}}\right)  ^{-\gamma}.
\]
(iii) It follows from (i) by the inversion formula for the Fourier transform.
\end{proof}

\begin{corollary}
\label{coro1}$\frac{\partial Z(x,t)}{\partial t}=\kappa\cdot\left(
\mathbf{W}_{\alpha}Z_{t}\right)  (x)$ \ for $t>0$ and $x\in\mathbb{Q}_{p}^{n}$.
\end{corollary}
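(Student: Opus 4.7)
The plan is direct: both sides of the asserted identity have already been computed as Fourier-type integrals in the two preceding lemmas, so all that is needed is to compare them.

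First I would invoke Lemma \ref{lemmaZ'}(i), which identifies $\partial Z/\partial t$ as
\[
\frac{\partial Z(x,t)}{\partial t} = -\kappa \int_{\mathbb{Q}_p^n} A_{w_\alpha}(\|\xi\|_p)\, e^{-\kappa t A_{w_\alpha}(\|\xi\|_p)}\, \Psi(x\cdot\xi)\, d^n\xi
\]
for $t>0$, $x\in\mathbb{Q}_p^n$. Next I would apply Lemma \ref{lemmaWZ}(i), which is valid for any $\gamma\le\alpha$, in the special case $\gamma=\alpha$ (the boundary case, which is allowed); this yields
\[
(\mathbf{W}_\alpha Z_t)(x) = -\int_{\mathbb{Q}_p^n} A_{w_\alpha}(\|\eta\|_p)\, e^{-\kappa t A_{w_\alpha}(\|\eta\|_p)}\, \Psi(x\cdot\eta)\, d^n\eta.
\]

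Comparing the two displays, the integrals coincide, so $\partial Z(x,t)/\partial t = \kappa\,(\mathbf{W}_\alpha Z_t)(x)$. There is essentially no obstacle at this step since all the analytic work (exchanging derivatives and integrals, the regularization via $Z_t^{(M)}$ used to justify bringing $\mathbf{W}_\gamma$ inside the Fourier integral, and the $L^1$-integrability of $A_{w_\alpha}(\|\xi\|_p) e^{-\kappa t A_{w_\alpha}(\|\xi\|_p)}$ needed to apply dominated convergence) has already been absorbed into Lemmas \ref{lemmaZ'} and \ref{lemmaWZ}. The only point worth verifying is that Lemma \ref{lemmaWZ}(i) really does apply at the endpoint $\gamma=\alpha$; but the statement of that lemma explicitly allows $\gamma\le\alpha$, so no additional argument is required.
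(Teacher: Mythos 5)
Your proposal is correct and matches the paper's own proof, which likewise derives the identity by comparing the Fourier-integral representations from Lemma \ref{lemmaZ'}~(i) and Lemma \ref{lemmaWZ}~(i) at $\gamma=\alpha$. No issues.
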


\begin{proof}
The formula follows from Lemma \ref{lemmaZ'} (i) and Lemma \ref{lemmaWZ} (i).
\end{proof}

\begin{proposition}
\label{u1}Assume that $\varphi\in\mathcal{\mathfrak{M}}_{\lambda}$, then the
following assertions hold:

(i) $\frac{\partial u_{1}}{\partial t}(x,t)=%
%TCIMACRO{\tint \nolimits_{\mathbb{Q}_{p}^{n}}}%
%BeginExpansion
{\textstyle\int\nolimits_{\mathbb{Q}_{p}^{n}}}
%EndExpansion
\frac{\partial Z(x-y,t)}{\partial t}\varphi(y)d^{n}y$,\ for $t>0$
and$\ x\in\mathbb{Q}_{p}^{n}\backslash\{0\}$;

(ii) $(\mathbf{W}_{\gamma}u_{1})(x,t)=%
%TCIMACRO{\tint \nolimits_{\mathbb{Q}_{p}^{n}}}%
%BeginExpansion
{\textstyle\int\nolimits_{\mathbb{Q}_{p}^{n}}}
%EndExpansion
(\mathbf{W}_{\gamma}Z_{t})(x-y)\varphi(y)d^{n}y$,\ for $n+\lambda<\gamma
\leq\alpha$, $t>0$ and $x\in\mathbb{Q}_{p}^{n}\backslash\{0\}$.
\end{proposition}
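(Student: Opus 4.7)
For (i), the plan is to differentiate under the integral sign via Lebesgue's dominated convergence theorem. Fix $(x, t_0) \in \mathbb{Q}_p^n \times (0, T]$, restrict to $t$ in a compact neighborhood $[t_0/2, 2t_0]$, and write the difference quotient as
\[
\frac{Z(x-y,t+h)-Z(x-y,t)}{h} = \int_0^1 \frac{\partial Z(x-y, t+sh)}{\partial t}\,ds.
\]
Applying Lemma \ref{lemmaZ'} (iv) uniformly in $s\in[0,1]$ for $|h|$ small yields the pointwise bound $2^\alpha C_3(\Vert x-y\Vert_p + (t_0/2)^{1/(\alpha-n)})^{-\alpha}$. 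Multiplying by $|\varphi(y)| \leq C(1+\Vert y\Vert_p^\lambda)$ and using Proposition \ref{Conv} with parameter $\alpha-n$ (permissible since $\alpha - n > \lambda$) gives an $h$-uniform $y$-integrable majorant, and dominated convergence delivers (i).

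For (ii), the plan is to interchange the order of integration in
\[
(\mathbf{W}_\gamma u_1)(x,t)= \int_{\mathbb{Q}_p^n}\frac{1}{w_\gamma(\Vert \xi\Vert_p)}\int_{\mathbb{Q}_p^n}[Z(x-\xi-y,t)-Z(x-y,t)]\varphi(y)\,d^n y\,d^n\xi.
\]
The obstacle is that the $(\xi,y)$-integrand is not absolutely integrable near $\xi=0$, so Fubini does not apply directly. I would truncate: set
\[
I_M(x,t):=\int_{p^{-M}<\Vert \xi\Vert_p\leq p^M}\frac{u_1(x-\xi,t)-u_1(x,t)}{w_\gamma(\Vert \xi\Vert_p)}\,d^n\xi.
\]
On the bounded shell the factor $1/w_\gamma$ is bounded, and the estimate on $\int |Z(x-\xi-y,t)\varphi(y)|\,d^n y$ used in the proof of Lemma \ref{u1u2} is available, so Fubini applies and yields
\[
I_M(x,t)=\int_{\mathbb{Q}_p^n}\varphi(y)\int_{p^{-M}<\Vert \xi\Vert_p\leq p^M}\frac{Z(x-\xi-y,t)-Z(x-y,t)}{w_\gamma(\Vert \xi\Vert_p)}\,d^n\xi\,d^n y.
\]

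It remains to pass to the limit $M\to\infty$ on both sides. On the left, since $u_1\in\mathfrak{M}_\lambda$ with $l(u_1)=l(\varphi)$ by Lemma \ref{u1u2}, the integrand of $I_M$ vanishes on $\Vert \xi\Vert_p\leq p^{l(\varphi)}$, so $I_M\to(\mathbf{W}_\gamma u_1)(x,t)$. On the right, for each $y\neq x$ the radialness of $Z_t$ (Remark \ref{Remark}) makes the inner integrand vanish for $\Vert \xi\Vert_p<\Vert x-y\Vert_p$, and the inner integral converges pointwise to $(\mathbf{W}_\gamma Z_t)(x-y)$. The hardest step is producing an $M$-uniform, $y$-integrable majorant for the outer integrand; this is achieved by adapting the estimate behind Lemma \ref{lemmaWZ} (ii) to the truncated inner integrals, yielding the bound $C(\Vert x-y\Vert_p+t^{1/(\alpha-n)})^{-\gamma}$. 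Combined with $|\varphi(y)|\leq C(1+\Vert y\Vert_p^\lambda)$ and Proposition \ref{Conv} applied with parameter $\gamma-n>\lambda$, this provides the dominator $C(\Vert x-y\Vert_p+t^{1/(\alpha-n)})^{-\gamma}(1+\Vert y\Vert_p^\lambda)$, and dominated convergence completes the proof.
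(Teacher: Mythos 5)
Your part (i) is essentially the paper's argument: the paper writes the difference quotient via the Mean Value Theorem as $\frac{\partial Z}{\partial t}(x-y,\tau)$ with $\tau$ between $t$ and $t+h$ and then applies dominated convergence with the bound of Lemma \ref{lemmaZ'} (iv); your integral form of the same mean-value representation changes nothing of substance. Part (ii) is correct but takes a genuinely different route. The paper truncates only from below, over $\left\{ \left\Vert y\right\Vert _{p}>p^{-L}\right\}$, applies Fubini on that \emph{unbounded} region (absolute integrability there is exactly where $\gamma>n+\lambda$ enters, via Lemma \ref{lemmaZ} (iii) and $\int\left\vert Z_{t}(x-y-\xi)\varphi(\xi)\right\vert d^{n}\xi\leq C(1+\left\Vert x-y\right\Vert _{p}^{\lambda})$), and then splits the $\xi$-integral at $\left\Vert x-\xi\right\Vert _{p}=p^{-M}$, using the radial local constancy of $Z_{t}$ (Remark \ref{Remark}) to identify each piece with the corresponding piece of $\int\varphi(\xi)(\mathbf{W}_{\gamma}Z_{t})(x-\xi)d^{n}\xi$ as $L\rightarrow\infty$. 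You instead truncate to a bounded shell, where Fubini is immediate, and push the whole burden onto one dominated-convergence step, which requires the \emph{uniform-in-}$M$ bound $\bigl\vert \int_{p^{-M}<\left\Vert \xi\right\Vert _{p}\leq p^{M}}\frac{Z_{t}(z-\xi)-Z_{t}(z)}{w_{\gamma}(\left\Vert \xi\right\Vert _{p})}d^{n}\xi\bigr\vert \leq C\left(  \left\Vert z\right\Vert _{p}+t^{1/(\alpha-n)}\right)  ^{-\gamma}$. That is not literally Lemma \ref{lemmaWZ} (ii), but it does follow by the same two-regime argument, and you should be explicit that the regime $\left\Vert z\right\Vert _{p}\leq t^{1/(\alpha-n)}$ needs the Fourier representation of the truncated operator with symbol $P_{M}(\eta)$ satisfying $\left\vert P_{M}(\eta)\right\vert \leq C\left\Vert \eta\right\Vert _{p}^{\gamma-n}$ uniformly in $M$ --- precisely the $P_{k}$ estimate the paper itself proves inside Proposition \ref{u2} (ii) --- while the regime $\left\Vert z\right\Vert _{p}\geq t^{1/(\alpha-n)}$ uses the vanishing of the integrand for $\left\Vert \xi\right\Vert _{p}<\left\Vert z\right\Vert _{p}$ together with Lemma \ref{lemmaZ} (iii)--(iv). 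With that estimate in hand your majorant $C\left(  \left\Vert x-y\right\Vert _{p}+t^{1/(\alpha-n)}\right)  ^{-\gamma}(1+\left\Vert y\right\Vert _{p}^{\lambda})$ is integrable by Proposition \ref{Conv} since $\gamma-n>\lambda$, and the scheme closes. What your version buys is a cleaner limit passage (a single dominated convergence in $y$); what it costs is re-deriving the kernel estimate for the truncated operator, which the paper avoids by exploiting local constancy to split the $\xi$-integral instead.
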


\begin{proof}
(i) By using the Mean Value Theorem, $\frac{\partial u_{1}}{\partial t}(x,t)$
equals%
\[
\lim_{h\rightarrow0}\underset{\mathbb{Q}_{p}^{n}}{\int}\left[  \frac
{Z(x-y,t+h)-Z(x-y,t)}{h}\right]  \varphi(y)d^{n}y=\lim_{h\rightarrow
0}\underset{\mathbb{Q}_{p}^{n}}{\int}\frac{\partial Z(x-y,\tau)}{\partial
t}\varphi(y)d^{n}y,
\]
where $\tau$ is between $t$ and $t+h$. Now, the result follows by applying the
Dominated Converge Theorem and Lemma \ref{lemmaZ'} (iv).

(ii) By Remark \ref{nota2}, if \ $n+\lambda<\gamma$, then $u_{1}\in
Dom(\mathbf{W}_{\gamma})$ for\ $t>0$. Then \ for any $L\in\mathbb{N}$, the
following integral exists:%
\begin{multline*}
\underset{\left\Vert y\right\Vert _{p}>p^{-L}}{\int}\frac{u_{1}(x-y,t)-u_{1}%
(x,t)}{w_{\gamma}(\left\Vert y\right\Vert _{p})}d^{n}y\\
=\underset{\left\Vert y\right\Vert _{p}>p^{-L}}{\int}\frac{1}{w_{\gamma
}(\left\Vert y\right\Vert _{p})}\underset{\mathbb{Q}_{p}^{n}}{\int}\left(
Z_{t}(x-y-\xi)-Z_{t}(x-\xi)\right)  \varphi(\xi)d^{n}\xi d^{n}y,
\end{multline*}
now, by using Fubini's Theorem, cf. Lemma \ref{lemmaZ} (iii),
\[
\underset{\mathbb{Q}_{p}^{n}}{\int}\varphi(\xi)\underset{\left\Vert
y\right\Vert _{p}>p^{-L}}{\int}\frac{\left(  Z_{t}(x-\xi-y)-Z_{t}%
(x-\xi)\right)  }{w_{\gamma}(\left\Vert y\right\Vert _{p})}d^{n}yd^{n}\xi.
\]

We now fix a positive integer $M$,\ such that $\left\Vert y\right\Vert
_{p}<p^{-L}<p^{-M}<\left\Vert x-\xi\right\Vert _{p}$, and use Remark
\ref{Remark},%
\begin{multline*}
(\boldsymbol{W}_{\gamma}u_{1})(x,t)=\lim_{L\rightarrow\infty}\underset
{\left\Vert y\right\Vert _{p}>p^{-L}}{\int}\frac{u_{1}(x-y,t)-u_{1}%
(x,t)}{w_{\gamma}(\left\Vert y\right\Vert _{p})}d^{n}y\\
=\underset{\left\Vert x-\xi\right\Vert _{p}>p^{-M}}{\int}\varphi
(\xi)(\boldsymbol{W}_{\gamma}Z_{t})(x-\xi)d^{n}\xi\\
+\lim_{L\rightarrow\infty}\underset{\left\Vert x-\xi\right\Vert _{p}\leq
p^{-M}}{\int}\varphi(\xi)\underset{\left\Vert y\right\Vert _{p}>p^{-L}}{\int
}\frac{\left(  Z_{t}(x-\xi-y)-Z_{t}(x-\xi)\right)  }{w_{\gamma}(\left\Vert
y\right\Vert _{p})}d^{n}yd^{n}\xi\\
=\underset{\left\Vert x-\xi\right\Vert _{p}>p^{-M}}{\int}\varphi
(\xi)(\boldsymbol{W}_{\gamma}Z_{t})(x-\xi)d^{n}\xi+\underset{\left\Vert
x-\xi\right\Vert _{p}\leq p^{-M}}{\int}\varphi(\xi)(\boldsymbol{W}_{\gamma
}Z_{t})(x-\xi)d^{n}\xi.
\end{multline*}

\end{proof}

Set
\[
u_{2}(x,t,\tau):=%
%TCIMACRO{\dint \limits_{\tau}^{t}}%
%BeginExpansion
{\displaystyle\int\limits_{\tau}^{t}}
%EndExpansion
\underset{\mathbb{Q}_{p}^{n}}{\int}Z(x-y,t-\theta)f(y,\theta)d^{n}yd\theta,
\]
for $f\in\mathcal{\mathfrak{M}}_{\lambda}$ with $\ \alpha-n>\lambda$, for
$0\leq\tau\leq t\leq T$, and $\ x\in\mathbb{Q}_{p}^{n}$. By reasoning as in
the proof of Lemma \ref{u1u2}, we have $u_{2}(x,t,\tau)\in
\mathcal{\mathfrak{M}}_{\lambda}$ uniformly in $t$ and $\tau$.

\begin{proposition}
\label{u2}Assume that $f\in\mathcal{\mathfrak{M}}_{\lambda}$, with
$\alpha-n>\lambda$, then the following assertions hold:

(i) $\frac{\partial u_{2}}{\partial t}(x,t,\tau)=f(x,t)+%
%TCIMACRO{\tint \nolimits_{\tau}^{t}}%
%BeginExpansion
{\textstyle\int\nolimits_{\tau}^{t}}
%EndExpansion
\left(
%TCIMACRO{\tint \nolimits_{\mathbb{Q}_{p}^{n}}}%
%BeginExpansion
{\textstyle\int\nolimits_{\mathbb{Q}_{p}^{n}}}
%EndExpansion
\frac{\partial Z(x-y,t-\theta)}{\partial t}\left[  f(y,\theta)-f(x,\theta
)\right]  d^{n}y\right)  d\theta$, for $t>0$ and$\ x\in\mathbb{Q}_{p}^{n}$;

(ii) $(\mathbf{W}_{\gamma}u_{2})(x,t,\tau)=%
%TCIMACRO{\tint \nolimits_{\tau}^{t}}%
%BeginExpansion
{\textstyle\int\nolimits_{\tau}^{t}}
%EndExpansion%
%TCIMACRO{\tint \nolimits_{\mathbb{Q}_{p}^{n}}}%
%BeginExpansion
{\textstyle\int\nolimits_{\mathbb{Q}_{p}^{n}}}
%EndExpansion
(\mathbf{W}_{\gamma}Z)(x-y,t-\theta)f(y,\theta)d^{n}yd\theta$, for
$n+\lambda<\gamma\leq\alpha$, $t>0$ and$\ x\in\mathbb{Q}_{p}^{n}$.
\end{proposition}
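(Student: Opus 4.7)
The plan is to follow the scheme of Proposition \ref{u1}, the main new feature being that the $\theta$-integration in $u_2$ must be handled by Leibniz' rule.

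For (i), I would first recast $u_2$ in a form amenable to differentiation under the integral sign. Using the identity $\int_{\mathbb{Q}_p^n}Z(x-y,t-\theta)d^n y = 1$ from Lemma \ref{lemmaZ}(iv), one has
\[
u_2(x,t,\tau) = \int_\tau^t\!\int_{\mathbb{Q}_p^n}Z(x-y,t-\theta)\bigl[f(y,\theta)-f(x,\theta)\bigr]d^n y\,d\theta + \int_\tau^t f(x,\theta)\,d\theta.
\]
The second term differentiates directly in $t$ to $f(x,t)$. For the first term, since $f$ has a parameter of local constancy $l=l(f)$ independent of $\theta$, the inner integrand vanishes on the ball $\|x-y\|_p\le p^l$, so the effective region of integration avoids the origin. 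By Leibniz' rule the $t$-derivative produces a boundary contribution at $\theta=t$ which vanishes by Lemma \ref{CI} applied to $f(\cdot,t)$, plus an interior contribution where $\partial_t$ passes inside. Differentiating $\int Z(x-y,s)d^n y = 1$ in $s$ gives $\int\partial_t Z(x-y,t-\theta)d^n y = 0$, and this identity is exactly what permits reinserting the subtraction by $f(x,\theta)$ under the interior integral to produce the asserted formula.

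For (ii), I would imitate the argument of Proposition \ref{u1}(ii). By Remark \ref{nota2}, $u_2(\cdot,t,\tau)\in Dom(\mathbf{W}_\gamma)$ when $n+\lambda<\gamma\le\alpha$. Truncating the defining integral of $\mathbf{W}_\gamma$ to $\|y\|_p>p^{-L}$, applying Fubini to interchange the $(y,\xi,\theta)$ integrations (whose joint absolute integrability follows from Lemma \ref{lemmaZ}(iii) together with the polynomial growth of $f$), and then splitting the resulting $\xi$-integral according to whether $\|x-\xi\|_p>p^{-M}$ or $\|x-\xi\|_p\le p^{-M}$, exactly as in the proof of Proposition \ref{u1}(ii), one recognises $(\mathbf{W}_\gamma Z_{t-\theta})(x-\xi)$ under the integral sign after sending $L\to\infty$.

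The main obstacle is the uniform control of all these interchanges near $\theta=t$, where both $Z_{t-\theta}$ and $\partial_t Z(\cdot,t-\theta)$ develop sharp concentration. The controlling mechanism is once again the local constancy of $f$: the increment $f(y,\theta)-f(x,\theta)$ vanishes on a neighborhood of $x$ of radius $p^l$ independent of $\theta$, so combined with the sharp decay bounds of Lemmas \ref{lemmaZ'}(iv) and \ref{lemmaWZ}(ii) and Proposition \ref{Conv}, one obtains dominating functions integrable over $[\tau,t]\times\mathbb{Q}_p^n$ that legitimise both the Leibniz rule in (i) and the Fubini/limit interchange in (ii).
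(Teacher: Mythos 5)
Your proposal is correct and follows essentially the same route as the paper: the paper truncates the $\theta$-integral at $t-h$ and passes to the limit $h\rightarrow0^{+}$, whereas you perform the subtraction of $f(x,\theta)$ first (via Lemma \ref{lemmaZ} (iv)) and apply Leibniz' rule directly, but both arguments rest on the identical ingredients — local constancy of $f$ to kill the singularity at $\theta=t$, the decay bounds of Lemmas \ref{lemmaZ'} (iv) and \ref{lemmaWZ} (ii), Proposition \ref{Conv}, and the $L$/$M$ splitting of Proposition \ref{u1} (ii). The only point worth making explicit is that in part (ii) with $\gamma=\alpha$ the reinsertion of the increment $f(y,\theta)-f(x,\theta)$ is licensed by $\int_{\mathbb{Q}_{p}^{n}}(\boldsymbol{W}_{\gamma}Z_{t})(x)\,d^{n}x=0$ (Lemma \ref{lemmaWZ} (iii)), the analogue of the identity $\int_{\mathbb{Q}_{p}^{n}}\frac{\partial Z}{\partial t}\,d^{n}y=0$ you invoke in part (i).
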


\begin{proof}
Set%
\[
u_{2,h}(x,t,\tau):=%
%TCIMACRO{\dint \limits_{\tau}^{t-h}}%
%BeginExpansion
{\displaystyle\int\limits_{\tau}^{t-h}}
%EndExpansion
\underset{\mathbb{Q}_{p}^{n}}{\int}Z(x-y,t-\theta)f(y,\theta)d^{n}%
yd\theta\text{, }0<h<t-\tau.
\]
By using a standard reasoning, one shows that
\begin{multline*}
\frac{\partial u_{2,h}}{\partial t}(x,t,\tau)=\\%
%TCIMACRO{\dint \limits_{\tau}^{t-h}}%
%BeginExpansion
{\displaystyle\int\limits_{\tau}^{t-h}}
%EndExpansion
\underset{\mathbb{Q}_{p}^{n}}{\int}\frac{\partial Z(x-y,t-\theta)}{\partial
t}f(y,\theta)d^{n}yd\theta+\underset{\mathbb{Q}_{p}^{n}}{\int}%
Z(x-y,h)f(y,t-h)d^{n}yd\theta.
\end{multline*}

This formula can be rewritten as%
\begin{multline*}
\frac{\partial u_{2,h}}{\partial t}(x,t,\tau)=%
%TCIMACRO{\dint \limits_{\tau}^{t-h}}%
%BeginExpansion
{\displaystyle\int\limits_{\tau}^{t-h}}
%EndExpansion
\underset{\mathbb{Q}_{p}^{n}}{\int}\frac{\partial Z(x-y,t-\theta)}{\partial
t}\left[  f(y,\theta)-f(x,\theta)\right]  d^{n}yd\theta\\
+%
%TCIMACRO{\dint \limits_{\tau}^{t-h}}%
%BeginExpansion
{\displaystyle\int\limits_{\tau}^{t-h}}
%EndExpansion
f(x,\theta)\underset{\mathbb{Q}_{p}^{n}}{\int}\frac{\partial Z(x-y,t-\theta
)}{\partial t}d^{n}yd\theta+\underset{\mathbb{Q}_{p}^{n}}{\int}Z(x-y,h)\left[
f(y,t-h)-f(y,t)\right]  d^{n}y\\
+\underset{\mathbb{Q}_{p}^{n}}{\int}Z(x-y,h)f(y,t)d^{n}y.
\end{multline*}

The first integral contains no singularity at $t=\theta$ \ due to Lemma
\ref{lemmaZ'} (iv)\ and the local constancy of $\ f$. \ By Lemma \ref{lemmaZ}
(iv), \ the second integral is equal to zero. The third integral can be
written \ as a sum of the integrals over $\left\{  y\in\mathbb{Q}_{p}%
^{n}\text{ }|\text{ }\left\Vert x-y\right\Vert _{p}\geq p^{M}\right\}  $ and
the complement of this set, one of these integrals is estimated on the basis
of the uniform continuity of $f$, while the other contains no singularity, see
Lemma \ref{lemmaZ'} (iv). Finally, the fourth integral tends to $\ f(x,t)$ as
\ $h\rightarrow0^{+}$, cf.\ Lemma \ref{CI}.

(ii) By Lemma \ref{u1u2}, $\mathbf{W}_{\gamma}u_{2,h}$ is well-defined for any
$\gamma$ satisfying $n+\lambda<\gamma\leq\alpha$. Then, for any $\ L\in
\mathbb{N}$, the following integral exists:%
\begin{align}
&
%TCIMACRO{\dint \limits_{\left\Vert \xi\right\Vert _{p}\geq p^{-L}}}%
%BeginExpansion
{\displaystyle\int\limits_{\left\Vert \xi\right\Vert _{p}\geq p^{-L}}}
%EndExpansion
\frac{\left[  u_{2,h}(x-\xi,t,\tau)-u_{2,h}(x,t,\tau)\right]  }{w_{\gamma
}(\left\Vert \xi\right\Vert _{p})}d^{n}\xi\label{breaking}\\
&  =%
%TCIMACRO{\dint \limits_{\tau}^{t-h}}%
%BeginExpansion
{\displaystyle\int\limits_{\tau}^{t-h}}
%EndExpansion
\underset{\mathbb{Q}_{p}^{n}}{\int}%
%TCIMACRO{\dint \limits_{\left\Vert \xi\right\Vert _{p}\geq p^{-L}}}%
%BeginExpansion
{\displaystyle\int\limits_{\left\Vert \xi\right\Vert _{p}\geq p^{-L}}}
%EndExpansion
\frac{\left[  Z(x-\xi-y,t-\theta)-Z(x-y,t-\theta)\right]  }{w_{\gamma
}(\left\Vert \xi\right\Vert _{p})}f(y,\theta)d^{n}\xi d^{n}yd\theta.\nonumber
\end{align}

On the other hand, by Fubini's Theorem,%
\begin{multline*}%
%TCIMACRO{\dint \limits_{\left\Vert \xi\right\Vert _{p}\geq p^{-L}}}%
%BeginExpansion
{\displaystyle\int\limits_{\left\Vert \xi\right\Vert _{p}\geq p^{-L}}}
%EndExpansion
\frac{\left[  Z(x-\xi-y,t-\theta)-Z(x-y,t-\theta)\right]  }{w_{\gamma
}(\left\Vert \xi\right\Vert _{p})}d^{n}\xi\\
=\underset{\mathbb{Q}_{p}^{n}}{\int}\Psi((x-y)\cdot\eta)e^{-\kappa
(t-\theta)A_{w_{\gamma}}(\left\Vert \eta\right\Vert _{p})}P_{k}(\eta)d^{n}%
\eta,
\end{multline*}
where%
\[
P_{k}(\eta)=%
%TCIMACRO{\dint \limits_{\left\Vert \xi\right\Vert _{p}\geq p^{-L}}}%
%BeginExpansion
{\displaystyle\int\limits_{\left\Vert \xi\right\Vert _{p}\geq p^{-L}}}
%EndExpansion
\frac{\left[  \Psi(-\xi\cdot\eta)-1\right]  }{w_{\gamma}(\left\Vert
\xi\right\Vert _{p})}d^{n}\xi.
\]

A simple calculation shows that%
\[
\left\vert P_{k}(\eta)\right\vert \leq C^{\prime}\left\Vert \eta\right\Vert
_{p}^{\gamma-n},
\]
and then%
\[%
%TCIMACRO{\dint \limits_{\left\Vert \xi\right\Vert _{p}\geq p^{-L}}}%
%BeginExpansion
{\displaystyle\int\limits_{\left\Vert \xi\right\Vert _{p}\geq p^{-L}}}
%EndExpansion
\frac{\left[  Z(x-\xi-y,t-\theta)-Z(x-y,t-\theta)\right]  }{w_{\gamma
}(\left\Vert \xi\right\Vert _{p})}d^{n}\xi\leq C,
\]
where the constant does not depend on $x$, $t\geq h+\tau$, $L$.

Now, by expressing the right integral of (\ref{breaking}) as%
\begin{multline}%
%TCIMACRO{\dint \limits_{\tau}^{t-h}}%
%BeginExpansion
{\displaystyle\int\limits_{\tau}^{t-h}}
%EndExpansion
\underset{\left\Vert x-\xi\right\Vert _{p}>p^{-M}}{\int}\text{ }%
%TCIMACRO{\dint \limits_{\left\Vert \xi\right\Vert _{p}\geq p^{-L}}}%
%BeginExpansion
{\displaystyle\int\limits_{\left\Vert \xi\right\Vert _{p}\geq p^{-L}}}
%EndExpansion
\frac{\left[  Z(x-\xi-y,t-\theta)-Z(x-y,t-\theta)\right]  }{w_{\gamma
}(\left\Vert \xi\right\Vert _{p})}f(y,\theta)d^{n}\xi d^{n}yd\theta
\label{Wu2h2}\\
+%
%TCIMACRO{\dint \limits_{\tau}^{t-h}}%
%BeginExpansion
{\displaystyle\int\limits_{\tau}^{t-h}}
%EndExpansion
\underset{\left\Vert x-\xi\right\Vert _{p}\leq p^{-M}}{\int}\text{ }%
%TCIMACRO{\dint \limits_{\left\Vert \xi\right\Vert _{p}\geq p^{-L}}}%
%BeginExpansion
{\displaystyle\int\limits_{\left\Vert \xi\right\Vert _{p}\geq p^{-L}}}
%EndExpansion
\frac{\left[  Z(x-\xi-y,t-\theta)-Z(x-y,t-\theta)\right]  }{w_{\gamma
}(\left\Vert \xi\right\Vert _{p})}f(y,\theta)d^{n}\xi d^{n}yd\theta\nonumber
\end{multline}

where $M$ is a positive integer, \ such that $\left\Vert \xi\right\Vert
_{p}<p^{-L}<p^{-M}<\left\Vert x-\xi\right\Vert _{p}$, and using the same
reasoning as in the final part of the proof of Proposition \ref{u1} (ii), we
obtain
\begin{equation}
(\boldsymbol{W}_{\gamma}u_{2,h})(x,t)=%
%TCIMACRO{\dint \limits_{\tau}^{t-h}}%
%BeginExpansion
{\displaystyle\int\limits_{\tau}^{t-h}}
%EndExpansion
\underset{\mathbb{Q}_{p}^{n}}{\int}(\boldsymbol{W}_{\gamma}Z)(x-\xi
,t-\theta)f(y,\theta)d^{n}\xi d\theta. \label{Wgamau2h}%
\end{equation}

Now, by Lemma \ref{lemmaWZ} (ii), the fact that $\ f\in\mathcal{\mathfrak{M}%
}_{\lambda}$, Proposition \ref{Conv}, and the Dominated Convergence Theorem,
we can take limit \ as\ $h\rightarrow0^{+}$, which completes the proof when
$\gamma<\alpha.$ If $\gamma=\alpha$, formula (\ref{Wgamau2h}) remains valid.
By using Lemma \ref{lemmaWZ} (iii), formula (\ref{Wgamau2h}) can be rewritten
as%
\[
(\boldsymbol{W}_{\gamma}u_{2,h})(x,t)=%
%TCIMACRO{\dint \limits_{\tau}^{t-h}}%
%BeginExpansion
{\displaystyle\int\limits_{\tau}^{t-h}}
%EndExpansion
\underset{\mathbb{Q}_{p}^{n}}{\int}(\boldsymbol{W}_{\gamma}Z)(x-\xi
,t-\theta)\left[  f(y,\theta)-f(x,\theta)\right]  d^{n}\xi d\theta.
\]

Now, by using the local constancy of\ $f$, we can justify the \ passage to the
\ limit as \ $h\rightarrow0^{+}$, which completes the proof.
\end{proof}

\begin{remark}
By Lemma \ref{lemmaZ} (iv) and Lemma \ref{lemmaZ'} (i), $\int_{\mathbb{Q}%
_{p}^{n}}\frac{\partial Z(x-y,t-\theta)}{\partial t}d^{n}y=0$, then
\[
\frac{\partial u_{2}}{\partial t}(x,t,\tau)=f(x,t)+%
%TCIMACRO{\dint \limits_{\tau}^{t}}%
%BeginExpansion
{\displaystyle\int\limits_{\tau}^{t}}
%EndExpansion
\left(  \underset{\mathbb{Q}_{p}^{n}}{\int}\frac{\partial Z(x-y,t-\theta
)}{\partial t}f(y,\theta)d^{n}y\right)  d\theta\text{,}%
\]
for $t>0$ and$\ x\in\mathbb{Q}_{p}^{n}$.
\end{remark}

\section{\label{Sect4}Parabolic-type \ equations with variable coefficients}

First, we fix the notation that will be used through this section. We fix
$N+1$ positive real numbers satisfying $n<\alpha_{1}<\alpha_{2}<\cdots
<\alpha_{N}<\alpha$. We fix $N+2$ functions $a_{k}(x,t),$ $k=0,\ldots N$ and
$b(x,t)$ from $\mathbb{Q}_{p}^{n}\times\lbrack0,T]$ to $\mathbb{R}$, here $T$
is a positive constant. We assume that: (i) $b(x,t)$ and $a_{k}(x,t)$, for
$k=0,\ldots,N$,\ belong (with respect to $x$) to$\ \mathcal{\mathfrak{M}}_{0}$
uniformly with respect to $t\in\left[  0,T\right]  $; (ii) $a_{0}%
(x,t)$\ satisfies the Hölder condition in $t$ with exponent $v\in(0,1)$
uniformly in $x$. We also assume the uniform parabolicity condition
$a_{0}(x,t)\geq\mu>0$ and that $\alpha_{N+1}:=n+\left(  \alpha-n\right)
(1-v)>\alpha_{N}$.\ Notice that $\alpha_{N+1}<\alpha$.

Set $\widetilde{\boldsymbol{W}}:=\sum_{k=1}^{N}a_{k}%
(x,t)\boldsymbol{\mathbf{W}}_{\alpha_{k}}-b(x,t)\boldsymbol{I}$ with domain
$\mathcal{\mathfrak{M}}_{\lambda}$, and $0\leq\lambda+n<\alpha_{1}$. Notice
that $\widetilde{\boldsymbol{W}}:\mathcal{\mathfrak{M}}_{\lambda}%
\rightarrow\mathcal{\mathfrak{M}}_{\lambda}$.

In this section we construct a solution for the following initial value
problem:%
\begin{equation}
\left\{
\begin{array}
[c]{l}%
\frac{\partial u}{\partial t}(x,t)-a_{0}(x,t)(\boldsymbol{\mathbf{W}}_{\alpha
}u)(x,t)-(\widetilde{\boldsymbol{W}}u)(x,t)=f(x,t)\\
\\
u\left(  x,0\right)  =\varphi(x),
\end{array}
\right.  \label{Cauchy2}%
\end{equation}
where\ $x\in$ $\mathbb{Q}_{p}^{n},$ $t\in(0,T],$ $\varphi(x)\in
\mathcal{\mathfrak{M}}_{\lambda}$, $f(x,t)\in\mathcal{\mathfrak{M}}_{\lambda}$
uniformly with respect to $t$, with $\ 0\leq\lambda<\alpha_{1}-n$, and
$\ f(x,t)$ is continuous in $(x,t)$ (if $a_{1}(x,t)=\cdots=a_{N}(x,t)\equiv0$
then we shall assume that $0\leq\lambda<\alpha-n$).

\subsection{Parametrized Cauchy problem}

We first study the following Cauchy problem:%
\begin{equation}
\left\{
\begin{array}
[c]{l}%
\frac{\partial u}{\partial t}(x,t)-a_{0}(y,\theta)(\boldsymbol{\mathbf{W}%
}_{\alpha}u)(x,t)=0\text{, }\ x\in\mathbb{Q}_{p}^{n},t\in(0,T]\\
\\
u\left(  x,0\right)  =\varphi(x),
\end{array}
\right.  \label{Cauchy3}%
\end{equation}
where $y\in\mathbb{Q}_{p}^{n}$,\ $\theta>0$ are parameters. By taking
$\kappa=a_{0}(y,\theta)\geq\mu>0$ and applying the results of Section
\ref{Sect3}, Cauchy problem (\ref{Cauchy3}) has a fundamental solution \ given
by%
\[
Z(x,t;y,\theta,w_{\alpha},\kappa):=Z(x,t;y,\theta)=%
%TCIMACRO{\dint \limits_{\mathbb{Q}_{p}^{n}}}%
%BeginExpansion
{\displaystyle\int\limits_{\mathbb{Q}_{p}^{n}}}
%EndExpansion
\Psi(x\cdot\xi)e^{-a_{0}(y,\theta)tA_{w_{\alpha}}(\left\Vert \xi\right\Vert
_{p})}d^{n}\xi,
\]
for $t>0$ and \ $x\in\mathbb{Q}_{p}^{n}$.

\begin{remark}
All statements from the Lemmas \ref{lemmaZ}, \ref{lemmaZ'}, \ref{lemmaWZ}%
\ hold for $Z(x,t;y,\theta)$ and the involved constants do not depend of $y$
and $\theta$. Thus, we have the following estimates:%
\begin{equation}
Z(x,t;y,\theta)\leq C_{1}t\left(  \left\Vert x\right\Vert _{p}+t^{\frac
{1}{\alpha-n}}\right)  ^{-\alpha}\text{, for }t>0; \label{Est1}%
\end{equation}

\end{remark}%

\begin{equation}
\left\vert \frac{\partial Z(x,t;y,\theta)}{\partial t}\right\vert \leq
C_{2}\left(  \left\Vert x\right\Vert _{p}+t^{\frac{1}{\alpha-n}}\right)
^{-\alpha}\text{, for }t>0; \label{Est2}%
\end{equation}

\begin{equation}
\left\vert \left(  \boldsymbol{W}_{\gamma}Z\right)  (x,t;y,\theta)\right\vert
\leq C_{3}\left(  \left\Vert x\right\Vert _{p}+t^{\frac{1}{\alpha-n}}\right)
^{-\gamma}\text{, for }t>0\text{ and }\gamma\leq\alpha. \label{Est3}%
\end{equation}

And the identities:%
\begin{equation}
\underset{\mathbb{Q}_{p}^{n}}{\int}Z(x,t;y,\theta)d^{n}x=1\text{, for }t>0;
\label{Est4}%
\end{equation}

\begin{equation}
\frac{\partial Z(x,t;y,\theta)}{\partial t}=-a_{0}\left(  y,\theta\right)
\underset{\mathbb{Q}_{p}^{n}}{\int}A_{w_{\alpha}}(\left\Vert \xi\right\Vert
_{p})e^{-a_{0}\left(  y,\theta\right)  tA_{w_{\alpha}}(\left\Vert
\xi\right\Vert _{p})}\Psi(x\cdot\xi)d^{n}\xi\text{, for }t>0; \label{Est5}%
\end{equation}

\begin{equation}
\left(  \mathbf{W}_{\gamma}Z\right)  (x,t;y,\theta)=-\underset{\mathbb{Q}%
_{p}^{n}}{\int}A_{w_{\gamma}}(\left\Vert \xi\right\Vert _{p})e^{-a_{0}\left(
y,\theta\right)  tA_{w_{\alpha}}(\left\Vert \xi\right\Vert _{p})}\Psi
(x\cdot\xi)d^{n}\xi\text{, } \label{Est6}%
\end{equation}
for $t>0$ and $\gamma\leq\alpha$;%

\begin{equation}
\underset{\mathbb{Q}_{p}^{n}}{\int}\left(  \boldsymbol{W}_{\gamma}%
Z_{t}\right)  (x,t;y,\theta)d^{n}x=0. \label{Est7}%
\end{equation}

\begin{lemma}
There exists a positive constant $C$, such that%
\begin{equation}
\left\vert \underset{\mathbb{Q}_{p}^{n}}{\int}\frac{\partial Z(x-y;t,y,\theta
)}{\partial t}d^{n}y\right\vert \leq C. \label{Est8}%
\end{equation}

\end{lemma}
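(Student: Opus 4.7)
The plan is to exploit the fact that $y$ enters the integrand in two distinct ways: as a spatial shift through $x-y$ and as a parameter through $a_{0}(y,\theta)$. A naive application of the estimate (\ref{Est2}) would yield a bound of order $t^{-1}$ as $t\to 0^{+}$ (since $\int_{\mathbb{Q}_{p}^{n}}(\|z\|_{p}+t^{1/(\alpha-n)})^{-\alpha}d^{n}z$ scales like $t^{(n-\alpha)/(\alpha-n)}$), which is not good enough. The fix is to add and subtract the kernel with the parameter frozen at $x$.

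Concretely, I would split
\[
\frac{\partial Z(x-y,t;y,\theta)}{\partial t}=\frac{\partial Z(x-y,t;x,\theta)}{\partial t}+R(x,y,t,\theta),
\]
where $R(x,y,t,\theta):=\frac{\partial Z(x-y,t;y,\theta)}{\partial t}-\frac{\partial Z(x-y,t;x,\theta)}{\partial t}$, and handle the two pieces separately.

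\textbf{First piece.} A change of variables $z=x-y$ turns $\int_{\mathbb{Q}_{p}^{n}}\frac{\partial Z(x-y,t;x,\theta)}{\partial t}d^{n}y$ into $\int_{\mathbb{Q}_{p}^{n}}\frac{\partial Z(z,t;x,\theta)}{\partial t}d^{n}z$. Since by (\ref{Est4}) the map $t\mapsto\int Z(z,t;x,\theta)d^{n}z\equiv 1$ is constant, and the interchange of $\partial_{t}$ and the $z$-integral is justified by the integrable $t$-uniform envelope furnished by (\ref{Est2}) (the bounding function $(\|z\|_{p}+t_{0}^{1/(\alpha-n)})^{-\alpha}$ is in $L^{1}$ since $\alpha>n$), this piece equals zero.

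\textbf{Second piece.} Here the hypothesis that $a_{0}(\cdot,\theta)\in\mathcal{\mathfrak{M}}_{0}$ uniformly in $\theta$ does the work. Let $l_{0}$ be the uniform parameter of local constancy of $a_{0}$. For $\|x-y\|_{p}\leq p^{l_{0}}$ we have $a_{0}(y,\theta)=a_{0}(x,\theta)$, so $Z(x-y,t;y,\theta)\equiv Z(x-y,t;x,\theta)$ and thus $R\equiv 0$ on this ball. On the complement $\|x-y\|_{p}>p^{l_{0}}$, applying the estimate (\ref{Est2}) to each of the two derivatives separately (whose constant is uniform in the parameter since $\mu\leq a_{0}\leq\|a_{0}\|_{\infty}$) gives
\[
|R(x,y,t,\theta)|\leq 2C_{2}\bigl(\|x-y\|_{p}+t^{1/(\alpha-n)}\bigr)^{-\alpha}\leq 2C_{2}\|x-y\|_{p}^{-\alpha},
\]
so $\int_{\|x-y\|_{p}>p^{l_{0}}}|R|\,d^{n}y\leq 2C_{2}\int_{\|z\|_{p}>p^{l_{0}}}\|z\|_{p}^{-\alpha}d^{n}z$, finite because $\alpha>n$, and independent of $x$, $t$, $\theta$. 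Adding the two contributions yields the claim. The only nontrivial step is spotting the add-and-subtract trick; after that, every estimate needed is already in the Remark preceding the Lemma.
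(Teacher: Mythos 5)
Your proof is correct and complete: the add-and-subtract decomposition, the vanishing of $\int_{\mathbb{Q}_{p}^{n}}\frac{\partial Z(z,t;x,\theta)}{\partial t}\,d^{n}z$ (which also follows directly from (\ref{Est5}) and Fourier inversion, since $A_{w_{\alpha}}(0)=0$), and the use of the uniform local constancy of $a_{0}(\cdot,\theta)$ to kill the remainder on a ball and bound it by $\|x-y\|_{p}^{-\alpha}$ outside are all sound, with all constants uniform in $x$, $t$, $\theta$ as required. The paper itself gives no argument here, deferring to Lemma 4.5 of Kochubei's book; your write-up is essentially a self-contained reconstruction of that standard Levi-method step adapted to the present hypotheses, so it matches the intended proof.
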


\begin{proof}
See proof Lemma 4.5 in \cite{Koch}.
\end{proof}

\subsection{Heat potentials}

We define \textit{the parameterized heat potentials} as follows:%
\[
u(x,t,\tau):=%
%TCIMACRO{\dint \limits_{\tau}^{t}}%
%BeginExpansion
{\displaystyle\int\limits_{\tau}^{t}}
%EndExpansion
\underset{\mathbb{Q}_{p}^{n}}{\int}Z(x-y,t-\theta;y,\theta)f(y,\theta
)d^{n}yd\theta,
\]
where $f\in\mathcal{\mathfrak{M}}_{\lambda}$, $0\leq\lambda<\alpha-n$, $f$
continuous in $(y,\theta)$. By using the same argument given to prove Lemma
\ref{u1u2}, one proves that $u\in\mathcal{\mathfrak{M}}_{\lambda}$ uniformly
in $t$ and $\tau$.

We now calculate the derivative with respect to\ $t$ and the action of the
operator $\mathbf{W}_{\gamma}$ on $u(x,t,\tau)$\ for $n+\lambda<\gamma
\leq\alpha$.

\begin{proposition}
\label{u3} Assume\ that $f\in\mathcal{\mathfrak{M}}_{\lambda}$, $0\leq
\lambda<\alpha-n$, $f$ continuous in $(y,\theta)$. Then the following
assertions hold:

(i) $\frac{\partial u(x,t,\tau)}{\partial t}=f(x,t)+\int_{\tau}^{t}%
\underset{\mathbb{Q}_{p}^{n}}{\int}\frac{\partial Z(x-y,t-\theta;y,\theta
)}{\partial t}f(y,\theta)d^{n}yd\theta$;

(ii)\ $(\mathbf{W}_{\gamma}u)(x,t,\tau)=\int_{\tau}^{t}\underset
{\mathbb{Q}_{p}^{n}}{\int}(\mathbf{W}_{\gamma}Z)(x-y,t-\theta;y,\theta
)f(y,\theta)d^{n}yd\theta$, $\gamma\leq\alpha$.
\end{proposition}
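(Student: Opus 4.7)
The plan is to follow, step by step, the proofs of Propositions \ref{u1} and \ref{u2}, with the observation that all the quantitative estimates from Lemmas \ref{lemmaZ}, \ref{lemmaZ'}, \ref{lemmaWZ} survive passage to the parametrized kernel $Z(x,t;y,\theta)$ with constants independent of $(y,\theta)$, as recorded in (\ref{Est1})--(\ref{Est7}). The one genuinely new feature, and the only real obstacle, is that for the parametrized potential the identity $\int_{\mathbb{Q}_{p}^{n}}\frac{\partial Z(x-y,t-\theta;y,\theta)}{\partial t}d^{n}y=0$ \emph{fails}, because the variable of integration $y$ is simultaneously the spatial argument and the frozen parameter of $Z$. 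In place of cancellation, one must use the uniform bound (\ref{Est8}), which controls precisely this integral and is the substitute that makes the rest of the constant-coefficient argument go through.

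For assertion (i), I would introduce the regularized potential
\[
u_{h}(x,t,\tau):=\int_{\tau}^{t-h}\int_{\mathbb{Q}_{p}^{n}}Z(x-y,t-\theta;y,\theta)f(y,\theta)\,d^{n}y\,d\theta,\qquad 0<h<t-\tau,
\]
and differentiate under the integral, which is legitimate thanks to (\ref{Est2}). This yields the bulk term $\int_{\tau}^{t-h}\int \partial_t Z \cdot f(y,\theta)\,d^{n}y\,d\theta$ plus a boundary term $\int Z(x-y,h;y,t-h)f(y,t-h)\,d^{n}y$. Adding and subtracting $f(x,\theta)$ inside the bulk term splits it into an integrand $\partial_t Z\cdot[f(y,\theta)-f(x,\theta)]$, which vanishes in a $p$-adic ball around $x$ by local constancy of $f$ and is integrable in $\theta$ by (\ref{Est2}) and Proposition \ref{Conv}, plus the exceptional piece $f(x,\theta)\int \partial_t Z\, d^{n}y$, whose inner integral is now bounded by (\ref{Est8}) and hence integrable in $\theta$. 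The boundary term tends to $f(x,t)$ as $h\to 0^{+}$ by continuity of $f$ and Lemma \ref{CI} (which, by the Remark, holds uniformly in the parameter). Passage to the limit $h\to 0^{+}$ in the remaining pieces is justified by the Dominated Convergence Theorem with the same majorants.

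For assertion (ii), I would again work with $u_{h}$ and compute $(\boldsymbol{W}_{\gamma}u_{h})(x,t,\tau)$ by writing
\[
\int_{\|\xi\|_{p}\geq p^{-L}}\frac{u_{h}(x-\xi,t,\tau)-u_{h}(x,t,\tau)}{w_{\gamma}(\|\xi\|_{p})}d^{n}\xi,
\]
exchanging orders via Fubini (justified by the explicit bound $|P_{k}(\eta)|\leq C'\|\eta\|_{p}^{\gamma-n}$ obtained exactly as in Proposition \ref{u2}), splitting the $\xi$-region according to whether $\|x-\xi\|_{p}>p^{-M}$ or not, and taking $L\to\infty$ to recover $(\boldsymbol{W}_{\gamma}Z)(x-y,t-\theta;y,\theta)$ under the integral. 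This reproduces formula (ii) with $u_{h}$ in place of $u$; the final limit $h\to 0^{+}$ is justified by (\ref{Est3}), the hypothesis $f\in\mathfrak{M}_{\lambda}$, Proposition \ref{Conv}, and dominated convergence, exactly as in the proof of Proposition \ref{u2}(ii). The case $\gamma=\alpha$ requires the same additional step as there: use (\ref{Est7}) to rewrite the integrand with $[f(y,\theta)-f(x,\theta)]$, and then invoke local constancy of $f$ to justify the limit. The principal difficulty throughout is keeping track of the coupling between the integration variable and the frozen parameter in $Z$; this is resolved, at each step where it matters, by the uniformity of the estimates and by (\ref{Est8}).
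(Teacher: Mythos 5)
Your proposal is correct and follows essentially the same route as the paper, which simply states that the proof is a straightforward variation of that of Proposition \ref{u2}. You have in fact made explicit the one point the paper leaves implicit — that the cancellation $\int_{\mathbb{Q}_{p}^{n}}\partial_{t}Z\,d^{n}y=0$ is lost for the parametrized kernel and must be replaced by the uniform bound (\ref{Est8}) — which is precisely why that estimate is recorded just before the heat-potential subsection.
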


\begin{proof}
It is\ a simple variation of the proof given for Proposition \ref{u2}.
\end{proof}

The following technical result will be used later on.

\begin{lemma}
[\cite{Koch}, Lemma 4.6]\label{lema J<} Let%
\begin{multline*}
J(x,\xi,t,\tau)=%
%TCIMACRO{\dint \limits_{\tau}^{t}}%
%BeginExpansion
{\displaystyle\int\limits_{\tau}^{t}}
%EndExpansion
(t-\theta)^{-\rho/\beta}(\theta-\tau)^{-\sigma/\beta}\\
\times\left(  \underset{\mathbb{Q}_{p}^{n}}{\int}\left(  (t-\theta)^{1/\beta
}+\left\Vert x-\eta\right\Vert _{p}\right)  ^{-n-b_{1}}\left(  (\theta
-\tau)^{1/\beta}+\left\Vert \eta-\xi\right\Vert _{p}\right)  ^{-n-b_{2}}%
d^{n}\eta\right)  d\theta,
\end{multline*}
where $x$, $\xi\in\mathbb{Q}_{p}^{n}$,$\ 0\leq\tau<t$,\ $b_{1}$, $b_{2}>0$,
$\rho+b_{1}<\beta$, $\sigma+b_{2}<\beta$. Then%
\begin{multline*}
J(x,\xi,t,\tau)\leq\\
C\left\{  B\left(  1-\frac{\rho}{\beta},1-\frac{\sigma+b_{2}}{\beta}\right)
\left(  (t-\tau)^{1/\beta}+\left\Vert x-\xi\right\Vert _{p}\right)
^{-n-b_{1}}(t-\tau)^{-\frac{(\rho+\sigma+b_{2}-\beta)}{\beta}}\right. \\
+\left.  B\left(  1-\frac{\rho+b_{1}}{\beta},1-\frac{\sigma}{\beta}\right)
\left(  (t-\tau)^{1/\beta}+\left\Vert x-\xi\right\Vert _{p}\right)
^{-n-b_{2}}(t-\tau)^{-\frac{(\rho+\sigma+b_{1}-\beta)}{\beta}}\right\}  ,
\end{multline*}
where$\ C$ is a positive constant depends only on $b_{1,}b_{2}$\ and
$B(\cdot,\cdot)$\ denotes the Ar\-chimedean Beta function.
\end{lemma}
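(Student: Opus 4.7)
The proof follows the strategy of Kochubei (\cite{Koch}, Lemma 4.6) and proceeds in two stages: first bounding the inner $\eta$-integral, then estimating the remaining $\theta$-integral via Beta-function asymptotics.

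\textbf{Stage 1 (the $\eta$-integral).} I would split $\mathbb{Q}_p^n=R_1\cup R_2$ with $R_1=\{\eta:\|x-\eta\|_p\geq\|\eta-\xi\|_p\}$ and $R_2$ its complement. On $R_1$ the ultrametric triangle inequality forces $\|x-\eta\|_p\geq\|x-\xi\|_p$, so
\[
\bigl((t-\theta)^{1/\beta}+\|x-\eta\|_p\bigr)^{-n-b_1}\leq \bigl((t-\theta)^{1/\beta}+\|x-\xi\|_p\bigr)^{-n-b_1},
\]
after which integrating the remaining factor over all of $\mathbb{Q}_p^n$ and using the standard scaling identity $\int_{\mathbb{Q}_p^n}(a+\|\eta\|_p)^{-n-b}d^n\eta=C_b\,a^{-b}$ (valid for $a,b>0$) yields a bound of order $((t-\theta)^{1/\beta}+\|x-\xi\|_p)^{-n-b_1}(\theta-\tau)^{-b_2/\beta}$. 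A symmetric argument on $R_2$ produces $((\theta-\tau)^{1/\beta}+\|x-\xi\|_p)^{-n-b_2}(t-\theta)^{-b_1/\beta}$. The inner integral is thereby controlled by the sum of these two Pólya-type products.

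\textbf{Stage 2 (the $\theta$-integral).} Substituting the Stage 1 bound into $J$ gives $J\leq I_1+I_2$, where each $I_j$ is a one-dimensional integral in $\theta$ carrying one of the two Pólya factors. To estimate $I_j$ I would rescale by $s=(\theta-\tau)/(t-\tau)$, extracting the prefactor $(t-\tau)^{1-(\rho_j+\sigma_j)/\beta}$ and reducing the problem to
\[
\int_0^1 s^{-\sigma_j/\beta}(1-s)^{-\rho_j/\beta}\bigl(((t-\tau)\tilde s_j)^{1/\beta}+\|x-\xi\|_p\bigr)^{-n-b_{i(j)}}ds,
\]
with the obvious choices of $(\rho_j,\sigma_j,\tilde s_j,i(j))$ coming from Stage 1. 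A dichotomy on whether $\|x-\xi\|_p\gtrless (t-\tau)^{1/\beta}$ makes the Pólya factor comparable (up to a universal constant) to $((t-\tau)^{1/\beta}+\|x-\xi\|_p)^{-n-b_{i(j)}}$ on the bulk of $[0,1]$, reducing the remaining integral to the classical Beta function: $B(1-\rho/\beta,1-(\sigma+b_2)/\beta)$ for $I_1$ and $B(1-(\rho+b_1)/\beta,1-\sigma/\beta)$ for $I_2$. Assembling the bounds gives exactly the inequality claimed.

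\textbf{Main obstacle.} The delicate regime is $\|x-\xi\|_p\ll(t-\tau)^{1/\beta}$. Here the naive estimate $((\cdot)^{1/\beta}+\|x-\xi\|_p)^{-n-b_j}\leq (\cdot)^{-(n+b_j)/\beta}$ would introduce a singularity of order $(n+b_j)/\beta$ in the Beta integrand, incompatible with the hypotheses $\rho+b_1<\beta$ and $\sigma+b_2<\beta$ (which only control $(\rho+b_1)/\beta<1$, not $(\rho+n+b_1)/\beta<1$). Closing the estimate here forces a further partition of $[0,1]$ at the crossover point $s\sim\|x-\xi\|_p^{\beta}/(t-\tau)$, after which the spatial bound $(\cdot+\|x-\xi\|_p)^{-n-b_j}\leq\|x-\xi\|_p^{-n-b_j}$ is used on one sub-interval and the time bound $(\cdot)^{-(n+b_j)/\beta}$ on its complement; the hypotheses are exactly what is needed to make both contributions integrable and to keep the final prefactor comparable to $((t-\tau)^{1/\beta}+\|x-\xi\|_p)^{-n-b_j}$. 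This bookkeeping is the heart of Kochubei's argument, which we invoke.
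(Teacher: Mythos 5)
The paper offers no proof of this lemma beyond deferring to Kochubei's Lemma 4.6, so the comparison is with that argument. Your two-stage architecture (a spatial convolution estimate, then a Beta-function estimate in $\theta$) is the right skeleton, but your Stage 1 bound is too weak and the Stage 2 patch does not repair it. The ultrametric split $R_1\cup R_2$ with the crude bound $((t-\theta)^{1/\beta}+\|x-\eta\|_p)^{-n-b_1}\leq((t-\theta)^{1/\beta}+\|x-\xi\|_p)^{-n-b_1}$, followed by integrating the other factor over all of $\mathbb{Q}_p^n$, leaves the \emph{short} time scale $(t-\theta)^{1/\beta}$ inside the surviving spatial factor. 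What the $\theta$-integration actually needs (and what Kochubei proves, by comparing $\|\eta-\xi\|_p$, resp.\ $\|x-\eta\|_p$, with \emph{both} time scales rather than only via the ultrametric trichotomy) is
\begin{multline*}
\underset{\mathbb{Q}_{p}^{n}}{\int}(\cdots)\,d^{n}\eta\leq C\Bigl[\bigl((t-\tau)^{1/\beta}+\|x-\xi\|_p\bigr)^{-n-b_1}(\theta-\tau)^{-b_2/\beta}\\
+\bigl((t-\tau)^{1/\beta}+\|x-\xi\|_p\bigr)^{-n-b_2}(t-\theta)^{-b_1/\beta}\Bigr],
\end{multline*}
with the \emph{combined} scale $(t-\tau)^{1/\beta}+\|x-\xi\|_p$ in the spatial factors. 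That factor is then constant in $\theta$, the $\theta$-integral is literally the Beta integral, and the hypotheses $\rho+b_1<\beta$, $\sigma+b_2<\beta$ are exactly the integrability conditions for the residual singularities $(t-\theta)^{-(\rho+b_1)/\beta}$ and $(\theta-\tau)^{-(\sigma+b_2)/\beta}$.

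With your weaker bound the loss is quantitative and the crossover partition cannot recover it. Take $\|x-\xi\|_p\ll(t-\tau)^{1/\beta}$ and $\rho+n+b_1>\beta$ (allowed: the hypotheses control $\rho+b_1$, not $\rho+n+b_1$). Near $\theta=t$ your first P\'olya term contributes $(t-\tau)^{-(\sigma+b_2)/\beta}\int_0^{t-\tau}s^{-\rho/\beta}(s^{1/\beta}+\|x-\xi\|_p)^{-n-b_1}\,ds$, and \emph{both} halves of the partition at $s\sim\|x-\xi\|_p^{\beta}$ evaluate to the same order $(t-\tau)^{-(\sigma+b_2)/\beta}\|x-\xi\|_p^{\beta-\rho-n-b_1}$. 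Since $\beta-\rho-n-b_1<0$, this blows up as $\|x-\xi\|_p\to0$, while the claimed right-hand side stays bounded, of order $(t-\tau)^{-(\sigma+b_2)/\beta}(t-\tau)^{(\beta-\rho-n-b_1)/\beta}$. So the ``bookkeeping'' you invoke is not where Kochubei's argument lives: the fix must occur in the spatial integral, before integrating in $\theta$, by producing the kernel with the full scale $(t-\tau)^{1/\beta}+\|x-\xi\|_p$.
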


The proof is a simple variation of that given by\ Kochubei for Lemma 4.6\ in
\cite{Koch}.

\subsection{Construction of a solution}

\begin{theorem}
\label{mainTheo}The Cauchy problem (\ref{Cauchy2}), has \ a solution, which
can be represented in the form%
\begin{equation}
u(x,t)=%
%TCIMACRO{\dint \limits_{0}^{t}}%
%BeginExpansion
{\displaystyle\int\limits_{0}^{t}}
%EndExpansion%
%TCIMACRO{\dint \limits_{\mathbb{Q}_{p}^{n}}}%
%BeginExpansion
{\displaystyle\int\limits_{\mathbb{Q}_{p}^{n}}}
%EndExpansion
{\LARGE \Lambda}(x,t,\xi,\tau)f(\xi,\tau)d^{n}\xi d\tau+%
%TCIMACRO{\dint \limits_{\mathbb{Q}_{p}^{n}}}%
%BeginExpansion
{\displaystyle\int\limits_{\mathbb{Q}_{p}^{n}}}
%EndExpansion
{\LARGE \Lambda}(x,t,\xi,0)\varphi(\xi)d^{n}\xi, \label{u(x,t)}%
\end{equation}
where the fundamental solution ${\LARGE \Lambda}(x,t,\xi,\tau)$, $x$, $\xi
\in\mathbb{Q}_{p}^{n}$, $0\leq\tau<t\leq T$, has \ the form%
\begin{equation}
{\LARGE \Lambda}(x,t,\xi,\tau)=Z(x-\xi,t-\tau;\xi,\tau)+\mathcal{W}\left(
x,t,\xi,\tau\right)  , \label{Gamma}%
\end{equation}
with%
\begin{align}
\left\vert \mathcal{W}\left(  x,t,\xi,\tau\right)  \right\vert  &  \leq
C\left\{  (t-\tau)^{2-\frac{\lambda}{\alpha-n}}\left[  (t-\tau)^{\frac
{1}{\alpha-n}}+\left\Vert x-\xi\right\Vert _{p}\right]  ^{-\alpha}\right.
\label{Cotateo}\\
&  +\left.  (t-\tau)%
%TCIMACRO{\dsum \limits_{k=1}^{N+1}}%
%BeginExpansion
{\displaystyle\sum\limits_{k=1}^{N+1}}
%EndExpansion
\left[  (t-\tau)^{\frac{1}{\alpha-n}}+\left\Vert x-\xi\right\Vert _{p}\right]
^{-\alpha_{k}}\right\}  .\nonumber
\end{align}

Furthermore $Z(x,t;y,\theta)$ satisfies the estimates (\ref{Est1}),
(\ref{Est2}), (\ref{Est3}), (\ref{Est8}).
\end{theorem}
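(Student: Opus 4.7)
The plan is to construct $\Lambda$ by Levi's parametrix method, using $Z(x-\xi,t-\tau;\xi,\tau)$ (the fundamental solution of the parametrized problem (\ref{Cauchy3})) as a parametrix. Write $\Lambda(x,t,\xi,\tau)=Z(x-\xi,t-\tau;\xi,\tau)+\mathcal{W}(x,t,\xi,\tau)$ and seek $\mathcal{W}$ in the form
\[
\mathcal{W}(x,t,\xi,\tau)=\int_{\tau}^{t}\int_{\mathbb{Q}_p^n} Z(x-y,t-\theta;y,\theta)\,\Phi(y,\theta,\xi,\tau)\,d^n y\, d\theta,
\]
where the unknown density $\Phi$ will be determined so that $\Lambda$ satisfies $\partial_t\Lambda-a_0(x,t)(\mathbf{W}_\alpha\Lambda)-(\widetilde{\mathbf{W}}\Lambda)=0$ in $x$ and the correct initial-type condition at $t=\tau$. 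Applying the operator and using Proposition~\ref{u3} (with $Z(\cdot,\cdot;y,\theta)$ as integrating kernel, which is exactly the fundamental solution of the parametrized problem for the frozen coefficient $a_0(y,\theta)$) produces a Volterra integral equation
\[
\Phi(x,t,\xi,\tau)=K(x,t,\xi,\tau)+\int_{\tau}^{t}\int_{\mathbb{Q}_p^n} K(x,t,y,\theta)\,\Phi(y,\theta,\xi,\tau)\,d^n y\, d\theta,
\]
whose kernel
\[
K(x,t,\xi,\tau)=\bigl[a_0(x,t)-a_0(\xi,\tau)\bigr](\mathbf{W}_\alpha Z)(x-\xi,t-\tau;\xi,\tau)+(\widetilde{\mathbf{W}}Z)(x-\xi,t-\tau;\xi,\tau)
\]
measures the failure of the parametrix to solve the true equation.

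First I would bound $K$. The term containing $a_0(x,t)-a_0(\xi,\tau)$ is estimated by splitting into a spatial and a temporal difference: the spatial difference is controlled by the local constancy of $a_0$ (so it vanishes once $\|x-\xi\|_p$ is small enough and produces, after pairing with estimate~(\ref{Est3}), a factor $(t-\tau)^{-\lambda/(\alpha-n)}$), while the temporal difference is controlled by the Hölder condition with exponent $v$, yielding $(t-\tau)^v$; combined with (\ref{Est3}) at $\gamma=\alpha$ this gives a kernel bounded by $C(t-\tau)^{\,v-1}\bigl[(t-\tau)^{1/(\alpha-n)}+\|x-\xi\|_p\bigr]^{-\alpha}$, and a similar recombination yields a $\bigl[(t-\tau)^{1/(\alpha-n)}+\|x-\xi\|_p\bigr]^{-\alpha_{N+1}}$ term thanks to the choice $\alpha_{N+1}=n+(\alpha-n)(1-v)$. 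The remaining part coming from $\widetilde{\mathbf{W}}Z$ contributes, via (\ref{Est3}), terms of the form $C\bigl[(t-\tau)^{1/(\alpha-n)}+\|x-\xi\|_p\bigr]^{-\alpha_k}$ for $k=1,\dots,N$. Summing, the kernel $K$ satisfies a bound of the type already announced in (\ref{Cotateo}), up to an overall factor $(t-\tau)^{-1}$ times the bracketed sum.

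Next I would solve the integral equation by iteration, $\Phi=\sum_{m\geq 1}K_m$ where $K_1=K$ and $K_{m+1}(x,t,\xi,\tau)=\int_\tau^t\int_{\mathbb{Q}_p^n} K(x,t,y,\theta)K_m(y,\theta,\xi,\tau)\,d^n y\, d\theta$. The crucial technical device here is Lemma~\ref{lema J<} applied to the space-time convolution of two kernels of the form $\bigl[(t-\theta)^{1/(\alpha-n)}+\|x-\eta\|_p\bigr]^{-n-b_1}\bigl[(\theta-\tau)^{1/(\alpha-n)}+\|\eta-\xi\|_p\bigr]^{-n-b_2}$ with $\beta=\alpha-n$. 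Because all exponents $\alpha,\alpha_{N+1},\alpha_k$ are strictly less than $\alpha$, each of the parameters $b_j=\alpha_\ast-n$ satisfies $b_j<\alpha-n=\beta$, so the Beta-function factors produced by Lemma~\ref{lema J<} are finite. An induction then shows $|K_m|$ is bounded by the same sum of kernels as $|K|$, multiplied by $C^m\prod B(\cdots)$, yielding a geometrically convergent series. The anticipated main obstacle is controlling the iterated convolutions uniformly so that the series converges on $0\le\tau<t\le T$ and yields the estimate~(\ref{Cotateo}) for $\mathcal{W}$; this is exactly where Lemma~\ref{lema J<} does the heavy lifting.

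Finally, with $\Phi$ constructed and the bound for $\mathcal{W}$ in hand, I would define $u(x,t)$ by (\ref{u(x,t)}). That $u$ belongs to $\mathfrak{M}_\lambda$ uniformly in $t$ and satisfies the initial condition $u(x,0)=\varphi(x)$ follows from Lemma~\ref{CI} and the $\mathcal{W}$-estimate (which makes the correction contribute $o(1)$ as $t\to 0^+$). That $u$ satisfies the equation is a direct consequence of Proposition~\ref{u3}, Corollary~\ref{coro1} (applied to the parametrized $Z$), and the integral equation for $\Phi$: differentiating under the sign of integration and applying $a_0\mathbf{W}_\alpha+\widetilde{\mathbf{W}}$ produces, after the cancellation built into the integral equation, precisely $f(x,t)$. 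The estimates (\ref{Est1})--(\ref{Est3}) and (\ref{Est8}) for $Z(x,t;y,\theta)$ are inherited from Section~\ref{Sect3} with $\kappa=a_0(y,\theta)\ge\mu>0$, as already noted in the Remark preceding the statement.
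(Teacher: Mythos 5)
Your proposal follows essentially the same route as the paper: Levi's parametrix method with the identical decomposition $\Lambda=Z+\mathcal{W}$, the same Volterra integral equation for $\Phi$ (your kernel $K$ is the paper's $R$), the same splitting of $a_{0}(x,t)-a_{0}(\xi,\tau)$ into a locally constant spatial part and a H\"older temporal part so that the $-\alpha$ decay is absorbed into $-\alpha_{N+1}$, and the same reliance on Lemma~\ref{lema J<} for the iterated kernels. The one imprecision is calling the Neumann series \emph{geometrically} convergent: the paper's Claim C shows the $m$-th iterate carries the factor $(t-\tau)^{mv}\left(\Gamma(v)\right)^{m+1}/\Gamma((m+1)v)$, and it is the factorial decay of $1/\Gamma((m+1)v)$ (via Stirling), together with the gain of $(t-\tau)^{v}$ at each step, rather than a geometric ratio, that makes the series converge uniformly on $[0,T]$.
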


\begin{proof}
We use the usual scheme of Levi's method. Thus, we look for a fundamental
solution of (\ref{Cauchy2}) having\ form (\ref{Gamma}), with
\[
\mathcal{W}\left(  x,t,\xi,\tau\right)  =%
%TCIMACRO{\dint \limits_{\tau}^{t}}%
%BeginExpansion
{\displaystyle\int\limits_{\tau}^{t}}
%EndExpansion
\underset{\mathbb{Q}_{p}^{n}}{\int}Z(x-\eta,t-\theta;\eta,\theta)\Phi
(\eta,\theta,\xi,\tau)d^{n}\eta d\theta.
\]
Then,\ we require that%
\begin{gather}
\frac{\partial{\LARGE \Lambda}}{\partial t}(x,t,\xi,\tau)-a_{0}%
(x,t)(\boldsymbol{\mathbf{W}}_{\alpha}{\LARGE \Lambda})(x,t,\xi,\tau
)-\sum_{k=1}^{N}a_{k}(x,t)(\boldsymbol{\mathbf{W}}_{\alpha_{k}}{\LARGE \Lambda
})(x,t,\xi,\tau)\nonumber\\
+b(x,t){\LARGE \Lambda}(x,t,\xi,\tau)=0\text{,} \label{Gamma1}%
\end{gather}
for $x\neq0$, $t>0$. Now by using (\ref{Gamma}), (\ref{Est5})-(\ref{Est6}) and
Proposition \ref{u3}, we have formally%
\begin{gather*}
\frac{\partial Z}{\partial t}(x-\xi,t-\tau,\xi,\tau)+\Phi(x,t,\xi,\tau)+%
%TCIMACRO{\dint \limits_{\tau}^{t}}%
%BeginExpansion
{\displaystyle\int\limits_{\tau}^{t}}
%EndExpansion
\underset{\mathbb{Q}_{p}^{n}}{\int}\frac{\partial Z(x-\eta,t-\theta
;\eta,\theta)}{\partial t}\Phi(\eta,\theta,\xi,\tau)d^{n}\eta d\theta\\
-a_{0}(x,t)\bigl\{(\boldsymbol{\mathbf{W}}_{\alpha}Z)(x-\xi,t-\tau;\xi,\tau)+%
%TCIMACRO{\dint \limits_{\tau}^{t}}%
%BeginExpansion
{\displaystyle\int\limits_{\tau}^{t}}
%EndExpansion
\underset{\mathbb{Q}_{p}^{n}}{\int}(\boldsymbol{\mathbf{W}}_{\alpha}%
Z)(x-\eta,t-\theta;\eta,\theta)\times\\
\Phi(\eta,\theta,\xi,\tau)d^{n}\eta d\theta\bigr\}-%
%TCIMACRO{\dsum \limits_{k=1}^{N}}%
%BeginExpansion
{\displaystyle\sum\limits_{k=1}^{N}}
%EndExpansion
a_{k}(x,t)\bigl\{(\boldsymbol{\mathbf{W}}_{\alpha_{k}}Z)(x-\xi,t-\tau;\xi
,\tau)\\
+%
%TCIMACRO{\dint \limits_{\tau}^{t}}%
%BeginExpansion
{\displaystyle\int\limits_{\tau}^{t}}
%EndExpansion
\underset{\mathbb{Q}_{p}^{n}}{\int}(\boldsymbol{\mathbf{W}}_{\alpha_{k}%
}Z)(x-\eta,t-\theta;\eta,\theta)\Phi(\eta,\theta,\xi,\tau)d^{n}\eta
d\theta\bigr\}\\
+b(x,t)\bigl\{Z(x-\xi,t-\tau;\xi,\tau)+%
%TCIMACRO{\dint \limits_{\tau}^{t}}%
%BeginExpansion
{\displaystyle\int\limits_{\tau}^{t}}
%EndExpansion
\underset{\mathbb{Q}_{p}^{n}}{\int}Z(x-\eta,t-\theta;\eta,\theta)\Phi
(\eta,\theta,\xi,\tau)d^{n}\eta d\theta\bigr\}=0.
\end{gather*}
By taking%
\begin{align*}
R(x,t,\xi,\tau)  &  :=(a_{0}(x,t)-a_{0}(\xi,\tau))(\mathbf{W}_{\alpha}%
Z)(x-\xi,t-\tau;\xi,\tau)\\
&  +\sum_{k=1}^{N}a_{k}(x,t)(\boldsymbol{\mathbf{W}}_{\alpha_{k}}%
Z)(x-\xi,t-\tau;\xi,\tau)-b(x,t)Z(x-\xi,t-\tau;\xi,\tau),
\end{align*}
one gets that\ $\Phi(x,t,\xi,\tau)$ satisfies the integral equation%
\begin{equation}
\Phi(x,t,\xi,\tau)=R\ (x,t,\xi,\tau)+%
%TCIMACRO{\dint \limits_{\tau}^{t}}%
%BeginExpansion
{\displaystyle\int\limits_{\tau}^{t}}
%EndExpansion
\underset{\mathbb{Q}_{p}^{n}}{\int}R(x,t,\eta,\theta)\Phi(\eta,\theta,\xi
,\tau)d^{n}\eta d\theta. \label{IntEqua}%
\end{equation}
Now, by using (\ref{Est3}) and (\ref{Est1}), we obtain%
\begin{align}
\left\vert R(x,t,\xi,\tau)\right\vert  &  \leq C_{0}\left(  \left\vert
a_{0}(x,t)-a_{0}(\xi,\tau)\right\vert ((t-\tau)^{\frac{1}{\alpha-n}%
}+\left\Vert x-\xi\right\Vert _{p})^{-\alpha}\right. \nonumber\\
&  +%
%TCIMACRO{\dsum \limits_{k=1}^{N}}%
%BeginExpansion
{\displaystyle\sum\limits_{k=1}^{N}}
%EndExpansion
\left(  (t-\tau)^{\frac{1}{\alpha-n}}+\left\Vert x-\xi\right\Vert _{p}\right)
^{-\alpha_{k}}\nonumber\\
&  \left.  +\left(  (t-\tau)^{\frac{1}{\alpha-n}}+\left\Vert x-\xi\right\Vert
_{p}\right)  ^{-\alpha}(t-\tau)\right)  . \label{Est_R}%
\end{align}

\textbf{Claim A. }%
\[
\left\vert a_{0}(x,t)-a_{0}(\xi,\tau)\right\vert ((t-\tau)^{\frac{1}{\alpha
-n}}+\left\Vert x-\xi\right\Vert _{p})^{-\alpha}\leq C_{1}^{\prime}%
((t-\tau)^{\frac{1}{\alpha-n}}+\left\Vert x-\xi\right\Vert _{p})^{-\alpha
_{N+1}},
\]
where $\alpha_{N+1}=n+\left(  \alpha-n\right)  (1-v)>\alpha_{N}$.

Indeed, by the Hölder condition\
\[
\left\vert a_{0}(x,t)-a_{0}(\xi,\tau)\right\vert \leq C_{1}(t-\tau
)^{v}+\left\vert a_{0}\left(  x,\tau\right)  -a_{0}\left(  \xi,\tau\right)
\right\vert .
\]

Let $l\left(  a_{0}\right)  $ be the parameter of local constancy of $a_{0}$.
Thus, if $\left\Vert x-\xi\right\Vert _{p}\leq p^{l\left(  a_{0}\right)  }$,
then $\left\vert a_{0}(x,t)-a_{0}(\xi,\tau)\right\vert \leq C_{1}(t-\tau)^{v}%
$. In the case $\left\Vert x-\xi\right\Vert _{p}\leq p^{l\left(  a_{0}\right)
}$, the inequality follows from the fact that $(t-\tau)^{v}((t-\tau)^{\frac
{1}{\alpha-n}}+\left\Vert x-\xi\right\Vert _{p})^{-\alpha+\alpha_{N+1}}$ is
bounded, which in turn follows from $\lim_{t\rightarrow\tau}(t-\tau
)^{v+\frac{-\alpha+\alpha_{N+1}}{\alpha-n}}=1$. In the case $\left\Vert
x-\xi\right\Vert _{p}>p^{l\left(  a_{0}\right)  }$, taking $\left\vert
a_{0}(x,t)-a_{0}(\xi,\tau)\right\vert \leq C_{0}$, \ the inequality follows
from%
\[
((t-\tau)^{\frac{1}{\alpha-n}}+\left\Vert x-\xi\right\Vert _{p})^{-\alpha
+\alpha_{N+1}}\leq\left\Vert x-\xi\right\Vert _{p}{}^{-\alpha+\alpha_{N+1}%
}\leq p{}^{\left(  -\alpha+\alpha_{N+1}\right)  l\left(  a_{0}\right)  }.
\]

\textbf{Claim B.}
\[
\left(  t-\tau\right)  ((t-\tau)^{\frac{1}{\alpha-n}}+\left\Vert
x-\xi\right\Vert _{p})^{-\alpha}\leq C_{2}((t-\tau)^{\frac{1}{\alpha-n}%
}+\left\Vert x-\xi\right\Vert _{p})^{-\alpha_{N+1}}.
\]

This assertion is a consequence of the fact that $\lim_{t\rightarrow\tau
}(t-\tau)^{1+\frac{-\alpha+\alpha_{N+1}}{\alpha-n}}=0$.

Now from (\ref{Est_R}), and Claims A-B, we have%
\begin{equation}
\left\vert R(x,t,\xi,\tau)\right\vert \leq C%
%TCIMACRO{\dsum \limits_{k=1}^{N+1}}%
%BeginExpansion
{\displaystyle\sum\limits_{k=1}^{N+1}}
%EndExpansion
\left[  (t-\tau)^{\frac{1}{\alpha-n}}+\left\Vert x-\xi\right\Vert _{p}\right]
^{-\alpha_{k}}. \label{R<}%
\end{equation}
We solve\ integral equation (\ref{IntEqua}) by the method of successive
approximations:%
\begin{equation}
\Phi(x,t,\xi,\tau)=%
%TCIMACRO{\dsum \limits_{m=1}^{\infty}}%
%BeginExpansion
{\displaystyle\sum\limits_{m=1}^{\infty}}
%EndExpansion
R_{m}(x,t,\eta,\theta), \label{PHI}%
\end{equation}
where$\ R_{1}\equiv R$ and%
\[
R_{m+1}(x,t,\xi,\tau)=%
%TCIMACRO{\dint \limits_{\tau}^{t}}%
%BeginExpansion
{\displaystyle\int\limits_{\tau}^{t}}
%EndExpansion
\underset{\mathbb{Q}_{p}^{n}}{\int}R(x,t,\eta,\theta)R_{m}(\eta,\theta
,\xi,\tau)d^{n}\eta d\theta\text{, for\ }m\geq1.
\]

\textbf{Claim C.}%
\begin{align*}
\left\vert R_{m+1}(x,t,\xi,\tau)\right\vert  &  \leq C(2N+2)^{m}(t-\tau
)^{mv}\frac{\left(  \Gamma(v)\right)  ^{m+1}}{\Gamma((m+1)v)}\times\\
&
%TCIMACRO{\dsum \limits_{j=1}^{N+1}}%
%BeginExpansion
{\displaystyle\sum\limits_{j=1}^{N+1}}
%EndExpansion
\left[  (t-\tau)^{\frac{1}{\alpha-n}}+\left\Vert x-\xi\right\Vert _{p}\right]
^{-\alpha_{j}},
\end{align*}
for $m\geq0$, where $\Gamma\left(  \cdot\right)  $ denotes the Archimedean
Gamma function.

The proof of this assertion will be given later.

It follows from Claim A, by the Stirling formula, that series (\ref{PHI}) is
convergent and that%
\begin{equation}
\left\vert \Phi(x,t,\xi,\tau)\right\vert \leq C_{0}%
%TCIMACRO{\dsum \limits_{k=1}^{N+1}}%
%BeginExpansion
{\displaystyle\sum\limits_{k=1}^{N+1}}
%EndExpansion
\left[  (t-\tau)^{\frac{1}{\alpha-n}}+\left\Vert x-\xi\right\Vert _{p}\right]
^{-\alpha_{k}}. \label{phi<}%
\end{equation}
Now (\ref{Cotateo}) follows from (\ref{phi<}) \ and\ Lemma \ref{lema J<}.

Denote by\ $u_{1}(x,t)$ \ and\ $u_{2}(x,t)$ \ the first and second terms in
the right hand side of (\ref{u(x,t)}). Substituting \ (\ref{Gamma}) into
(\ref{u(x,t)}), we find that%
\[
u_{1}(x,t)=%
%TCIMACRO{\dint \limits_{0}^{t}}%
%BeginExpansion
{\displaystyle\int\limits_{0}^{t}}
%EndExpansion%
%TCIMACRO{\dint \limits_{\mathbb{Q}_{p}^{n}}}%
%BeginExpansion
{\displaystyle\int\limits_{\mathbb{Q}_{p}^{n}}}
%EndExpansion
Z(x-\xi,t-\tau;\xi,\tau)f(\xi,\tau)d^{n}\xi d\tau+%
%TCIMACRO{\dint \limits_{0}^{t}}%
%BeginExpansion
{\displaystyle\int\limits_{0}^{t}}
%EndExpansion%
%TCIMACRO{\dint \limits_{\mathbb{Q}_{p}^{n}}}%
%BeginExpansion
{\displaystyle\int\limits_{\mathbb{Q}_{p}^{n}}}
%EndExpansion
Z(x-\eta,t-\theta;\eta,\theta)F(\eta,\theta)d^{n}\eta d\theta,
\]
and%
\[
u_{2}(x,t)=%
%TCIMACRO{\dint \limits_{\mathbb{Q}_{p}^{n}}}%
%BeginExpansion
{\displaystyle\int\limits_{\mathbb{Q}_{p}^{n}}}
%EndExpansion
Z(x-\xi,t;\xi,0)\varphi(\xi)d^{n}\xi+%
%TCIMACRO{\dint \limits_{0}^{t}}%
%BeginExpansion
{\displaystyle\int\limits_{0}^{t}}
%EndExpansion%
%TCIMACRO{\dint \limits_{\mathbb{Q}_{p}^{n}}}%
%BeginExpansion
{\displaystyle\int\limits_{\mathbb{Q}_{p}^{n}}}
%EndExpansion
Z(x-\eta,t-\theta;\eta,\theta)G(\eta,\theta)d^{n}\eta d\theta,
\]
where%
\begin{align}
F(\eta,\theta)  &  =%
%TCIMACRO{\dint \limits_{0}^{\theta}}%
%BeginExpansion
{\displaystyle\int\limits_{0}^{\theta}}
%EndExpansion%
%TCIMACRO{\dint \limits_{\mathbb{Q}_{p}^{n}}}%
%BeginExpansion
{\displaystyle\int\limits_{\mathbb{Q}_{p}^{n}}}
%EndExpansion
\Phi(\eta,\theta,\xi,\tau)f(\xi,\tau)d^{n}\xi d\tau,\label{FG0}\\
G(\eta,\theta)  &  =%
%TCIMACRO{\dint \limits_{\mathbb{Q}_{p}^{n}}}%
%BeginExpansion
{\displaystyle\int\limits_{\mathbb{Q}_{p}^{n}}}
%EndExpansion
\Phi(\eta,\theta,\xi,0)\varphi(\xi)d^{n}\xi. \label{FG}%
\end{align}
Now, by Proposition \ref{Conv} and\ (\ref{phi<}), it \ follows that%
\[
\left\vert F(\eta,\theta)\right\vert \leq C_{0}(1+\left\Vert \eta\right\Vert
_{p}^{\lambda})\text{, \ \ \ }\left\vert G(\eta,\theta)\right\vert \leq
C_{1}(1+\left\Vert \eta\right\Vert _{p}^{\lambda}),
\]
for all $\ \eta\in\mathbb{Q}_{p}^{n}$ and \ $\theta\in(0,T]$.

\textbf{Claim D.} The functions \ $F$ and $G$ belong to $\widetilde
{\mathcal{E}\text{,}}$ and their parameters of local constancy do not depend
on $\theta$.

We first note that by (\ref{FG0})-(\ref{FG}), it is sufficient to show that
$\Phi(\cdot,\theta,\star,\tau)$ is a locally constant function on $\left(
\mathbb{Q}_{p}^{\times}\right)  ^{n}\times\mathbb{Q}_{p}^{n}$ and that its
parameter of local constancy do not depend on $\theta$ and $\tau$. Now, by the
recursive definition of the function $\Phi$ we see that if $\ L$ is the
parameter local constancy for all the functions $a_{k}(\cdot,t)$, $b(\cdot
,t)$, $(\boldsymbol{\mathbf{W}}_{\alpha_{k}}Z)(\cdot,t-\tau;\star,\tau)$ and
$Z(\cdot,t-\tau;\star,\tau)$ on $\left(  \mathbb{Q}_{p}^{\times}\right)
^{n}\times\mathbb{Q}_{p}^{n}$, and\ if $\ \left\Vert \delta\right\Vert
_{p}\leq p^{-L}$,\ we have%
\[
R(x+\delta,t,\xi+\delta,\tau)=R(x,t,\xi,\tau).
\]
Furthermore, we successively obtain%
\begin{align*}
R_{m+1}(x+\delta,t,\xi+\delta,\tau)  &  =%
%TCIMACRO{\dint \limits_{\tau}^{t}}%
%BeginExpansion
{\displaystyle\int\limits_{\tau}^{t}}
%EndExpansion
\underset{\mathbb{Q}_{p}^{n}}{\int}R(x+\delta,t,\eta,\theta)R_{m}(\eta
,\theta,\xi+\delta,\tau)d^{n}\eta d\theta\\
&  =%
%TCIMACRO{\dint \limits_{\tau}^{t}}%
%BeginExpansion
{\displaystyle\int\limits_{\tau}^{t}}
%EndExpansion
\underset{\mathbb{Q}_{p}^{n}}{\int}R(x+\delta,t,\zeta+\delta,\theta
)R_{m}(\zeta+\delta,\theta,\xi+\delta,\tau)d^{n}\zeta d\theta\\
&  =R_{m+1}(x,t,\xi,\tau),
\end{align*}
so that \ $\Phi(x+\delta,t,\xi+\delta,\tau)=\Phi(x,t,\xi,\tau)$,\ and hence%
\begin{align*}
F(\eta+\delta,\theta)  &  =%
%TCIMACRO{\dint \limits_{0}^{\theta}}%
%BeginExpansion
{\displaystyle\int\limits_{0}^{\theta}}
%EndExpansion%
%TCIMACRO{\dint \limits_{\mathbb{Q}_{p}^{n}}}%
%BeginExpansion
{\displaystyle\int\limits_{\mathbb{Q}_{p}^{n}}}
%EndExpansion
\Phi(\eta+\delta,\theta,\xi,\tau)f(\xi,\tau)d^{n}\xi d\tau\\
&  =%
%TCIMACRO{\dint \limits_{0}^{\theta}}%
%BeginExpansion
{\displaystyle\int\limits_{0}^{\theta}}
%EndExpansion%
%TCIMACRO{\dint \limits_{\mathbb{Q}_{p}^{n}}}%
%BeginExpansion
{\displaystyle\int\limits_{\mathbb{Q}_{p}^{n}}}
%EndExpansion
\Phi(\eta+\delta,\theta,\xi+\delta,\tau)f(\xi+\delta,\tau)d^{n}\xi
d\tau=F(\eta,\theta).
\end{align*}
Similarly, $G(\eta+\delta,\theta)=G(\eta,\theta)$ when \ $\left\vert
\delta\right\vert _{p}\leq p^{-L}$. Thus $u_{1}(x,t)$, $u_{2}(x,t)\in
\mathcal{\mathfrak{M}}_{\lambda}\ $uniformly in $t$. Thus the potentials in
the expressions for $u_{1}(x,t)$, $u_{2}(x,t)$ satisfy the conditions to use
the differentiation formulas given in Proposition \ref{u3}. By using these
formulas along with Proposition \ref{u3}, (\ref{Est5})-(\ref{Est6}) and
(\ref{IntEqua}), ones verifies after simple transformations that $u(x,t)$ is a
solution of Cauchy problem \ (\ref{Cauchy2}).

Let us show that $u(x,t)\rightarrow\varphi(x)$ as $t\rightarrow0^{+}$. Due to
(\ref{Gamma}) and (\ref{Cotateo}), it \ is sufficient to verify that
\[
v(x,t):=%
%TCIMACRO{\dint \limits_{\mathbb{Q}_{p}^{n}}}%
%BeginExpansion
{\displaystyle\int\limits_{\mathbb{Q}_{p}^{n}}}
%EndExpansion
Z(x-\xi,t;\xi,0)\varphi(\xi)d^{n}\xi\rightarrow\varphi(x)\text{ \ as
\ }t\rightarrow0^{+}.\
\]
By virtue of formula (\ref{Est4}), we have
\begin{align*}
v(x,t)  &  =%
%TCIMACRO{\dint \limits_{\mathbb{Q}_{p}^{n}}}%
%BeginExpansion
{\displaystyle\int\limits_{\mathbb{Q}_{p}^{n}}}
%EndExpansion
\left[  Z(x-\xi,t;\xi,0)-Z(x-\xi,t;x,0)\right]  \varphi(\xi)d^{n}\xi\\
&  +%
%TCIMACRO{\dint \limits_{\mathbb{Q}_{p}^{n}}}%
%BeginExpansion
{\displaystyle\int\limits_{\mathbb{Q}_{p}^{n}}}
%EndExpansion
Z(x-\xi,t;x,0)\left[  \varphi(\xi)-\varphi(x)\right]  d^{n}\xi+\varphi(x).
\end{align*}
Now, since $Z(x-\xi,t;\cdot,0)$\ and $\varphi(\cdot)$ are locally constant
functions, it follows that in both integrals the integration is actually
performed over the set%
\[
\left\{  \xi\in\mathbb{Q}_{p}^{n}:\left\Vert \xi-x\right\Vert _{p}\geq
p^{-L}\right\}  .
\]

By applying (\ref{Est1}) on this set, we see that both integrals tend to zero
as $t\rightarrow0^{+}$.

\textbf{Proof \ of Claim C. }We use induction on $m$. The case $m=0$ is
(\ref{R<}). We assume the case $m$,\ then%
\begin{align*}
\left\vert R_{m+1}(x,t,\xi,\tau)\right\vert  &  \leq%
%TCIMACRO{\dint \limits_{\tau}^{t}}%
%BeginExpansion
{\displaystyle\int\limits_{\tau}^{t}}
%EndExpansion
\underset{\mathbb{Q}_{p}^{n}}{\int}\left\vert R(x,t,\eta,\theta)\right\vert
\left\vert R_{m}(\eta,\theta,\xi,\tau)\right\vert d^{n}\eta d\theta\\
&  \leq C_{0}(2N+2)^{m-1}\frac{\left(  \Gamma(v)\right)  ^{m}}{\Gamma(mv)}%
%TCIMACRO{\dsum \limits_{j,k=1}^{N+1}}%
%BeginExpansion
{\displaystyle\sum\limits_{j,k=1}^{N+1}}
%EndExpansion
\int_{\tau}^{t}(\theta-\tau)^{(m-1)v}\times\\
&  \underset{\mathbb{Q}_{p}^{n}}{\int}\left[  (\theta-\tau)^{\frac{1}%
{\alpha-n}}+\left\Vert \eta-\xi\right\Vert _{p}\right]  ^{-\alpha_{j}}\left[
(t-\theta)^{\frac{1}{\alpha-n}}+\left\Vert x-\eta\right\Vert _{p}\right]
^{-\alpha_{k}}d^{n}\eta d\theta.
\end{align*}
Now by Lemma \ref{lema J<}, with $-\sigma=(m-1)(\alpha-n)v$, $\rho
=0$,\ $-n-b_{2}=-\alpha_{j}$, $-n-b_{1}=-\alpha_{k}$, $\beta=\alpha-n$, we
have%
\begin{align*}
&  \left\vert R_{m+1}(x,t,\xi,\tau)\right\vert \\
&  \leq C_{0}\left\{  (2N+2)^{m-1}\frac{\left(  \Gamma(v)\right)  ^{m}}%
{\Gamma(mv)}%
%TCIMACRO{\dsum \limits_{j,k=1}^{N+1}}%
%BeginExpansion
{\displaystyle\sum\limits_{j,k=1}^{N+1}}
%EndExpansion
B\left(  1,\frac{\alpha+(m-1)(\alpha-n)v-\alpha_{j}}{\alpha-n}\right)
\times\right. \\
&  \left(  (t-\tau)^{1/(\alpha-n)}+\left\Vert x-\xi\right\Vert _{p}\right)
^{-\alpha_{k}}(t-\tau)^{^{\frac{(m-1)(\alpha-n)v-\alpha_{j}+\alpha)}{\alpha
-n}}}\\
&  +B\left(  \frac{\alpha-\alpha_{k}}{\alpha-n},\frac{\alpha-n+(m-1)(\alpha
-n)v}{\alpha-n}\right)  \left(  (t-\tau)^{1/(\alpha-n)}+\left\Vert
x-\xi\right\Vert _{p}\right)  ^{-\alpha_{j}}\times\\
&  \left.  (t-\tau)^{^{\frac{(m-1)(\alpha-n)v-\alpha_{k}+\alpha)}{\alpha-n}}%
}\right\}  .
\end{align*}

By using $B(z_{1}+\epsilon,z_{2}+\delta)\leq B(z_{1},z_{2})$, for
$\epsilon,\delta\geq0$,%
\begin{align*}
B\left(  1,\frac{\alpha+(m-1)(\alpha-n)v-\alpha_{j}}{\alpha-n}\right)   &
\leq B\left(  v,mv\right)  ,\\
B\left(  \frac{\alpha-\alpha_{k}}{\alpha-n},\frac{\alpha-n+(m-1)(\alpha
-n)v}{\alpha-n}\right)   &  \leq B\left(  v,mv\right)  ,
\end{align*}
and%
\[
(t-\tau)^{^{\frac{(m-1)(\alpha-n)v-\alpha_{r}+\alpha)}{\alpha-n}}}%
=(t-\tau)^{^{mv-\frac{(\alpha-n)v+\alpha_{r}-\alpha}{\alpha-n}}}\leq
C(t-\tau)^{^{mv}},
\]
for $1\leq r\leq N+1$, we get%
\begin{align*}
\left\vert R_{m+1}(x,t,\xi,\tau)\right\vert  &  \leq C(2N+2)^{m}\frac{\left(
\Gamma(v)\right)  ^{m}}{\Gamma(\left(  m+1\right)  v)}(t-\tau)^{^{mv}}\\
&  \times%
%TCIMACRO{\dsum \limits_{k=1}^{N+1}}%
%BeginExpansion
{\displaystyle\sum\limits_{k=1}^{N+1}}
%EndExpansion
\left(  (t-\tau)^{1/(\alpha-n)}+\left\Vert x-\xi\right\Vert _{p}\right)
^{-\alpha_{k}}.
\end{align*}

\end{proof}

\section{\label{sect5}Uniqueness of the Solution}

We recall that $\widetilde{\mathcal{E}}$ is the $\mathbb{C}$-vector space of
all functions $\varphi:\mathbb{Q}_{p}^{n}\rightarrow\mathbb{C}$, such that
there exist a ball $B_{l}^{n}$, with $l$ depending only on $\varphi$, and
$\varphi\left(  x+x^{\prime}\right)  =\varphi\left(  x\right)  $ for any
$x^{\prime}\in B_{l}^{n}$. Notice that $\mathfrak{M}_{\lambda}\subset
\widetilde{\mathcal{E}}$ for any $\lambda$. We identify any element of
$\widetilde{\mathcal{E}}$\ with a distribution on $\mathbb{Q}_{p}^{n}$. We now
recall the following fact:$\ T\in S^{\prime}$ with $\ supp(T)\subset B_{N}%
^{n}$ if only if $\widehat{T}$ $\in\widetilde{\mathcal{E}}$ and its parameter
of local constancy is greater than $-N$, cf. \cite[pg 109]{V-V-Z} or
\cite[Proposition 3.17]{Taibleson}.

\begin{lemma}
\label{NewDomain} $\mathbf{W}_{\alpha}:\widetilde{\mathcal{E}}\rightarrow
\widetilde{\mathcal{E}}$ is a well-defined linear operator. Furthermore,%
\[
(\mathbf{W}_{\alpha}\varphi)(x)=-\mathcal{F}_{\xi\rightarrow x}^{-1}\left(
A_{w_{\alpha}}(\left\Vert \xi\right\Vert _{p})\mathcal{F}_{x\rightarrow\xi
}\varphi\right)  .
\]

\end{lemma}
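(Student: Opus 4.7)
The plan is to exploit the characterization cited just before the statement: $\varphi\in\widetilde{\mathcal{E}}$ with parameter of local constancy $l(\varphi)$ corresponds, under the Fourier transform, to a compactly supported distribution $\widehat{\varphi}\in S'$ supported in some ball $B_{N}^{n}$ with $N$ determined by $l(\varphi)$. This suggests taking the Fourier-side formula in the statement as the \emph{definition} of $\mathbf{W}_{\alpha}\varphi$ on $\widetilde{\mathcal{E}}$, and then verifying three things: that the formula makes sense, that its output lies in $\widetilde{\mathcal{E}}$, and that it extends the operator originally defined by (\ref{W}).

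First I would fix $\varphi\in\widetilde{\mathcal{E}}$ and note that $\widehat{\varphi}\in S'$ is compactly supported in some $B_{N}^{n}$. Since $A_{w_{\alpha}}(\|\xi\|_{p})$ is continuous on $\mathbb{Q}_{p}^{n}$ (as recalled in Section~\ref{Sect2} from \cite{Ch-Z}), the product $A_{w_{\alpha}}\widehat{\varphi}$ can be defined distributionally by
\[
(A_{w_{\alpha}}\widehat{\varphi},\psi):=(\widehat{\varphi},\,\chi\cdot A_{w_{\alpha}}\cdot\psi),\qquad \psi\in S,
\]
for any cut-off $\chi\in S$ equal to $1$ on $B_{N}^{n}$; this value is independent of the choice of $\chi$ since $\widehat{\varphi}$ vanishes off $B_{N}^{n}$, and the resulting distribution is again supported in $B_{N}^{n}$. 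Applying the cited characterization in the reverse direction, $\mathcal{F}^{-1}(A_{w_{\alpha}}\widehat{\varphi})$ is then itself a function in $\widetilde{\mathcal{E}}$. Setting $\mathbf{W}_{\alpha}\varphi:=-\mathcal{F}^{-1}(A_{w_{\alpha}}\widehat{\varphi})$ therefore yields a well-defined map $\widetilde{\mathcal{E}}\to\widetilde{\mathcal{E}}$, linear by linearity of $\mathcal{F}$ and of multiplication by $A_{w_{\alpha}}$.

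To legitimize the name $\mathbf{W}_{\alpha}$, I would check compatibility with (\ref{W}) on the subspace $S(\mathbb{Q}_{p}^{n})\subset\widetilde{\mathcal{E}}$. For $\varphi\in S$ a Fubini-type calculation, legitimate because the integrand is compactly supported in $x$ and the inner $y$-integral has no singularity at the origin (by the local constancy of $\varphi$), yields
\[
\mathcal{F}_{x\to\xi}\!\left(\int_{\mathbb{Q}_{p}^{n}}\frac{\varphi(x-y)-\varphi(x)}{w_{\alpha}(\|y\|_{p})}\,d^{n}y\right)=-\left(\int_{\mathbb{Q}_{p}^{n}}\frac{1-\Psi(-y\cdot\xi)}{w_{\alpha}(\|y\|_{p})}\,d^{n}y\right)\widehat{\varphi}(\xi)=-A_{w_{\alpha}}(\|\xi\|_{p})\widehat{\varphi}(\xi),
\]
which matches the proposed formula after applying $\mathcal{F}^{-1}$.

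The main obstacle is giving the product $A_{w_{\alpha}}\widehat{\varphi}$ an unambiguous distributional meaning and verifying its independence of the cut-off $\chi$. This rests on the standard fact that a compactly supported distribution may be multiplied by a continuous function, but in the $p$-adic setting it must be made concrete using the abundance of locally constant functions in $S(\mathbb{Q}_{p}^{n})$ to produce $\chi$ of prescribed support and local constancy. Once this point is settled, the remaining assertions of the lemma follow immediately from the Fourier characterization of $\widetilde{\mathcal{E}}$.
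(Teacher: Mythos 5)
Your strategy runs in the opposite direction from the paper's: you take the Fourier--multiplier expression as the \emph{definition} and then try to reconcile it with (\ref{W}) on $S$, whereas the paper starts from (\ref{W}) (whose singularity at $y=0$ is removed by the local constancy of $\varphi$), rewrites it as
\[
(\mathbf{W}_{\alpha}\varphi)(x)=\frac{{\LARGE 1}_{\mathbb{Q}_{p}^{n}\smallsetminus B_{l}^{n}}(x)}{w_{\alpha}(\left\Vert x\right\Vert _{p})}\ast\varphi(x)-\varphi(x)\int_{\left\Vert y\right\Vert _{p}\geq p^{l}}\frac{d^{n}y}{w_{\alpha}(\left\Vert y\right\Vert _{p})},
\]
takes Fourier transforms in $S^{\prime}$, and only then uses $supp(\mathcal{F}\varphi)\subset B_{-l}^{n}$ to identify the resulting symbol with $-A_{w_{\alpha}}$. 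The reversal introduces a genuine gap at the step you yourself flag as the main obstacle. Your definition $(A_{w_{\alpha}}\widehat{\varphi},\psi):=(\widehat{\varphi},\chi A_{w_{\alpha}}\psi)$ does not parse: $p$-adic test functions must be locally constant, and $A_{w_{\alpha}}$ is continuous but \emph{not} locally constant at $\xi=0$ (it vanishes there and is comparable to $\left\Vert \xi\right\Vert _{p}^{\alpha-n}$, hence non-constant on every punctured neighborhood of $0$), so $\chi A_{w_{\alpha}}\psi\notin S$ whenever $\chi\psi$ is nonzero near the origin --- which lies in $supp(\widehat{\varphi})$. The ``standard fact'' that a compactly supported distribution can be multiplied by a continuous function is an Archimedean fact resting on finite order; it fails over $\mathbb{Q}_{p}^{n}$ (e.g. $T=\sum_{k}c_{k}(\delta_{x_{k}}-\delta_{y_{k}})$ with $x_{k},y_{k}\rightarrow0$ is a compactly supported distribution for \emph{any} coefficients $c_{k}$, since pairing with a locally constant $\psi$ is a finite sum, yet pairing with a merely continuous function diverges for suitable $c_{k}$). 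The difficulty therefore sits in the factor $A_{w_{\alpha}}$, not in manufacturing a cut-off $\chi$ of prescribed support and local constancy, and your plan does not address it. The paper never multiplies $\widehat{\varphi}$ by $A_{w_{\alpha}}$ globally: the symbol it obtains is $\int_{\left\Vert x\right\Vert _{p}\geq p^{l}}\frac{\Psi(x\cdot\xi)-1}{w_{\alpha}(\left\Vert x\right\Vert _{p})}d^{n}x$, and the identification with $-A_{w_{\alpha}}(\left\Vert \xi\right\Vert _{p})$ is made only on $B_{-l}^{n}\supset supp(\widehat{\varphi})$, where the two expressions literally coincide because $\Psi(x\cdot\xi)=1$ for $\left\Vert x\right\Vert _{p}\leq p^{l}$, $\left\Vert \xi\right\Vert _{p}\leq p^{-l}$.

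A second, smaller gap: checking agreement with (\ref{W}) only on $S(\mathbb{Q}_{p}^{n})$ is not enough to ``legitimize the name.'' The operator is already defined by the integral formula on $\mathfrak{M}_{\lambda}$ (Lemma \ref{lemma1}), and the downstream use of the present lemma (Lemma \ref{W(phi)<} feeding the minimum-principle argument of Theorem \ref{uniqueTheo}) requires the Fourier formula and the pointwise formula (\ref{W}) to describe the \emph{same} operator on a function $\psi\in\mathfrak{M}_{\gamma}\smallsetminus S$. Since $S$ is not dense in $\mathfrak{M}_{\lambda}$ in any topology for which both definitions are continuous, consistency on $S$ does not propagate; you need the computation for general $\varphi\in\widetilde{\mathcal{E}}$, which is precisely what the paper's convolution argument delivers in one stroke. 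Your Fubini computation on $S$ is correct as far as it goes, but it should be upgraded to the distributional setting rather than restricted to honest integrable test functions.
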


\begin{proof}
Let $l$ be a parameter of \ locally constancy of $\varphi$, then
\begin{align*}
(\mathbf{W}_{\alpha}\varphi)(x)  &  =%
%TCIMACRO{\dint \limits_{\left\Vert y\right\Vert _{p}\geq p^{l}}}%
%BeginExpansion
{\displaystyle\int\limits_{\left\Vert y\right\Vert _{p}\geq p^{l}}}
%EndExpansion
\frac{\varphi(x-y)-\varphi(x)}{w_{\alpha}(\left\Vert y\right\Vert _{p})}%
d^{n}y\\
&  =\frac{{\LARGE 1}_{\mathbb{Q}_{p}^{n}\smallsetminus B_{l}^{n}}\left(
x\right)  }{w_{\alpha}\left(  \left\Vert x\right\Vert _{p}\right)  }%
\ast\varphi\left(  x\right)  -\varphi\left(  x\right)  \left(  {\int
\limits_{\left\Vert y\right\Vert _{p}\geq p^{l}}}\frac{d^{n}y}{w_{\alpha
}(\left\Vert y\right\Vert _{p})}\right)  .
\end{align*}
Then by taking the Fourier transform in $S^{\prime}$:
\[
\mathcal{F}(\mathbf{W}_{\alpha}\varphi)(\xi)=\left(
%TCIMACRO{\dint \limits_{\mathbb{Q}_{p}^{n}}}%
%BeginExpansion
{\displaystyle\int\limits_{\mathbb{Q}_{p}^{n}}}
%EndExpansion
{\LARGE 1}_{\mathbb{Q}_{p}^{n}\smallsetminus B_{l}^{n}}\left(  x\right)
\frac{(\Psi(x\cdot\xi)-1)}{w_{\alpha}(\left\Vert x\right\Vert _{p})}%
d^{n}x\right)  \left(  \mathcal{F}\varphi\right)  (\xi),
\]
and since $\mathcal{F}\varphi\in S^{\prime}$ with $supp(\mathcal{F}%
\varphi)\subset B_{-l}^{n}$,%
\[
\mathcal{F}(\mathbf{W}_{\alpha}\varphi)(\xi)=\left(
%TCIMACRO{\dint \limits_{\mathbb{Q}_{p}^{n}}}%
%BeginExpansion
{\displaystyle\int\limits_{\mathbb{Q}_{p}^{n}}}
%EndExpansion
\frac{(\Psi(x\cdot\xi)-1)}{w_{\alpha}(\left\Vert x\right\Vert _{p})}%
d^{n}x\right)  \mathcal{F}\varphi(\xi).
\]
Therefore,%
\[
(\mathbf{W}_{\alpha}\varphi)(x)=-\mathcal{F}_{\xi\rightarrow x}^{-1}\left(
A_{w_{\alpha}}(\left\Vert \xi\right\Vert _{p})\mathcal{F}_{x\rightarrow\xi
}\varphi\right)  \in\widetilde{\mathcal{E}}.
\]

\end{proof}

Take $\gamma$ be a real number such that $\lambda<\gamma<\alpha_{1}%
-n<\ldots<\alpha_{N}-n<\alpha-n$, and fix a integer $L$, and set
$\psi(x):=p^{Ln}\Omega(p^{L}\left\Vert x\right\Vert _{p})\ast\left\Vert
x\right\Vert _{p}^{\gamma}$, then%
\[
\psi(x)=\left\{
\begin{array}
[c]{ll}%
\left\Vert x\right\Vert _{p}^{\gamma} & \text{if \ }\left\Vert x\right\Vert
_{p}>p^{-L}\\
& \\
C & \text{if }\left\Vert x\right\Vert _{p}\leq p^{-L}%
\end{array}
\right.  ,
\]
and thus $\psi\in\widetilde{\mathcal{E}}$.

\begin{lemma}
\label{W(phi)<}With above notation, there exist positive constants $C_{1}$ and
$C_{2}$ such that (i) $\left\vert (\mathbf{W}_{\alpha}\psi)(x)\right\vert \leq
C_{1}\left\Vert x\right\Vert _{p}^{\alpha-\gamma+n}$ and (ii) $\left\vert
(\mathbf{W}_{\alpha_{k}}\psi)(x)\right\vert \leq C_{2}\left\Vert x\right\Vert
_{p}^{\alpha_{k}-\gamma+n}$, for $k=1,\ldots,N$.
\end{lemma}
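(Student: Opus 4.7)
The plan is to evaluate $(\mathbf{W}_\beta\psi)(x)$ (with $\beta$ standing for either $\alpha$ or any of the $\alpha_k$) directly from the integral representation (\ref{W1}), exploiting the explicit piecewise-power structure of $\psi$.

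First I would identify $\psi$ explicitly. Since $p^{Ln}\Omega(p^L\|\cdot\|_p)$ is the normalized characteristic function of $B_{-L}^n$, the ultrametric triangle inequality immediately gives $\psi(x)=\|x\|_p^\gamma$ for $\|x\|_p>p^{-L}$ and $\psi(x)=C_L$ (a fixed constant) for $\|x\|_p\leq p^{-L}$. In particular $\psi\in\widetilde{\mathcal{E}}$ with parameter of local constancy $-L$, so formula (\ref{W1}) of Lemma \ref{lemma1} applies and gives
\[
(\mathbf{W}_\beta\psi)(x)=\int_{\|y\|_p>p^{-L}}\frac{\psi(x-y)-\psi(x)}{w_\beta(\|y\|_p)}\,d^ny.
\]

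Next I would split by the position of $x$. In the main case $\|x\|_p>p^{-L}$, so $\psi(x)=\|x\|_p^\gamma$, I would apply the ultrametric trichotomy to the integration variable: on $\|y\|_p<\|x\|_p$ the integrand vanishes since $\|x-y\|_p=\|x\|_p$; on $\|y\|_p>\|x\|_p$ one has $\psi(x-y)=\|y\|_p^\gamma$ and, using $w_\beta(\|y\|_p)\asymp\|y\|_p^\beta$, the contribution reduces to a convergent geometric series in the shells $\|y\|_p=p^k$ (convergence uses $\gamma+n<\beta$); on the critical sphere $\|y\|_p=\|x\|_p$, the substitution $y=\|x\|_p v$ with $\|v\|_p=1$ scales out the appropriate power of $\|x\|_p$ and leaves a bounded spherical integral. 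Collecting the three contributions yields a bound of the form asserted. In the remaining case $\|x\|_p\leq p^{-L}$, every admissible $y$ satisfies $\|x-y\|_p=\|y\|_p>p^{-L}$, so $(\mathbf{W}_\beta\psi)(x)=\int_{\|y\|_p>p^{-L}}(\|y\|_p^\gamma-C_L)/w_\beta(\|y\|_p)\,d^ny$ is a finite constant independent of $x$, again by $\gamma+n<\beta$.

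Combining the two cases gives (i) with $\beta=\alpha$ and (ii) with $\beta=\alpha_k$; the chain of inequalities $\gamma<\alpha_1-n\leq\alpha_k-n\leq\alpha-n$ is exactly what is needed for all the tail and spherical integrals to converge. I expect the critical sphere $\|y\|_p=\|x\|_p$ to be the main obstacle: the integrand has no decay in $y$ there, so one must separately treat the subregion $\|x-y\|_p\leq p^{-L}$ (where $\psi(x-y)=C_L$ instead of $\|x-y\|_p^\gamma$, a subregion of fixed volume $p^{-Ln}$ contributing only a bounded remainder) and, on its complement, verify local integrability of $\|u-v\|_p^\gamma-1$ over the unit sphere in $v$ (finite because $\gamma+n>0$). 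All other pieces are routine $p$-adic geometric-series estimates, and the $\mathbf{W}_{\alpha_k}$ version is obtained by literal substitution $\alpha\rightsquigarrow\alpha_k$.
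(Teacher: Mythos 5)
Your argument is correct, but it proceeds along a genuinely different route from the paper's. The paper works on the Fourier side: it invokes Lemma \ref{NewDomain} to write $(\mathbf{W}_{\alpha}\psi)(x)=-\mathcal{F}_{\xi\rightarrow x}^{-1}\left(A_{w_{\alpha}}(\left\Vert \xi\right\Vert _{p})\widehat{\psi}(\xi)\right)$, computes $\widehat{\psi}(\xi)=\Omega(p^{-L}\left\Vert \xi\right\Vert _{p})\left\Vert \xi\right\Vert _{p}^{-\gamma-n}/\Gamma_{n}(n+\gamma)$ explicitly, and then expands the inverse transform of this radial function as a series which it bounds via $A_{w_{\alpha}}(\left\Vert \xi\right\Vert _{p})\asymp\left\Vert \xi\right\Vert _{p}^{\alpha-n}$, arriving at $C\left\Vert x\right\Vert _{p}^{-\alpha+n+\gamma}$. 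You instead stay entirely on the physical side, using the integral definition of $\mathbf{W}_{\beta}$ together with the ultrametric trichotomy on $\left\Vert y\right\Vert _{p}$ versus $\left\Vert x\right\Vert _{p}$; your treatment of the critical sphere (separating off the subregion $\left\Vert x-y\right\Vert _{p}\leq p^{-L}$ of fixed volume, where $\psi(x-y)$ is the constant value) is precisely the point requiring care, and you handle it correctly, with the condition $\gamma+n<\alpha_{1}\leq\alpha_{k}\leq\alpha$ entering in the same places as in the paper. Your route is more elementary and self-contained, needing only $w_{\beta}\asymp\left\Vert y\right\Vert _{p}^{\beta}$ and the representation (\ref{W1}) (which is legitimate here since $\psi\in\mathfrak{M}_{\gamma}$ with $\gamma<\alpha_{1}-n$, so Lemma \ref{lemma1} applies directly without the distributional extension of Lemma \ref{NewDomain}), whereas the paper's route reuses the Fourier machinery it has already built and produces an essentially closed-form expression. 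One remark: both your computation and the paper's own proof yield the decaying exponent $-\beta+n+\gamma$; the positive exponent $\beta-\gamma+n$ printed in the lemma statement is evidently a sign typo, since the decay at infinity is exactly what is used afterwards in the proof of Theorem \ref{uniqueTheo}.
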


\begin{proof}
By Lemma \ref{NewDomain},%
\[
(\mathbf{W}_{\alpha}\psi)(x)=-\mathcal{F}_{\xi\rightarrow x}^{-1}\left(
A_{w_{\alpha}}(\left\Vert \xi\right\Vert _{p})\Omega(p^{-L}\left\Vert
\xi\right\Vert _{p})\frac{\left\Vert \xi\right\Vert _{p}^{-\gamma-n}}%
{\Gamma_{n}(n+\gamma)}\right)  \text{\ in }S^{\prime},
\]
where $\Gamma_{n}(n+\gamma)=\frac{1-p^{\gamma}}{1-p^{-\gamma-n}}$. Now, since
$A_{w_{\alpha}}(\left\Vert \xi\right\Vert _{p})\Omega(p^{-L}\left\Vert
\xi\right\Vert _{p})\frac{\left\Vert \xi\right\Vert _{p}^{-\gamma-n}}%
{\Gamma_{n}(n+\gamma)}$ is radial and locally integrable, by applying the
formula for Fourier transform of radial function,%
\begin{multline*}
(\mathbf{W}_{\alpha}\varphi)(x)=\\
\frac{-\left\Vert x\right\Vert _{p}^{-n}}{\Gamma_{n}(n+\gamma)}[(1-p^{-n}%
)\left\Vert x\right\Vert _{p}^{\gamma+n}%
%TCIMACRO{\dsum \limits_{j=0}^{\infty}}%
%BeginExpansion
{\displaystyle\sum\limits_{j=0}^{\infty}}
%EndExpansion
A_{w_{\alpha}}(\left\Vert x\right\Vert _{p}^{-1}p^{-j})\Omega(\left\Vert
x\right\Vert _{p}^{-1-j}p^{-j})p^{j(\gamma+n)-jn}\\
-A_{w_{\alpha}}(\left\Vert x\right\Vert _{p}p^{-j})\Omega(\left\Vert
x\right\Vert _{p}^{-1}p^{-L+1})\left\Vert x\right\Vert _{p}^{\gamma+n}],
\end{multline*}
as a distribution on $\mathbb{Q}_{p}^{n}\smallsetminus\left\{  0\right\}  $,
now by using Lemma 3.4 in \cite{Ch-Z},%
\[
\left\vert (\mathbf{W}_{\alpha}\varphi)(x)\right\vert \leq C^{\prime}\left[
(1-p^{-n})%
%TCIMACRO{\dsum \limits_{j=0}^{\infty}}%
%BeginExpansion
{\displaystyle\sum\limits_{j=0}^{\infty}}
%EndExpansion
p^{-j(\alpha-n)+j\gamma}-p^{(-L+1)(\alpha-n)}\right]  \left\Vert x\right\Vert
_{p}^{-\alpha+n+\gamma}.
\]
The proof of (ii) is similar.
\end{proof}

\begin{theorem}
\label{uniqueTheo}Assume that the coefficients $a_{k}(x,t),$ $k=0,1,\cdots,N$
are nonnegative bounded continuous functions, $b(x,t)$ is a bounded continuous
function, $0\leq\lambda<\alpha_{1}-n$ (if $a_{1}(x,t)=\cdots$ $a_{k}%
(x,t)\equiv0,$ we shall \ suppose that $0\leq\lambda<\alpha-n$) and $u(x,t)$
is a solution of Cauchy problem (\ref{Cauchy2}) with \ $f(x,t)=\varphi
(x)\equiv0$ that belongs to class $\mathcal{\mathfrak{M}}_{\lambda}$. Then
$u(x,t)\equiv0$.
\end{theorem}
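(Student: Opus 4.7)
The plan is to prove uniqueness via a maximum-principle argument, using the key observation that the operator $\mathbf{W}_\alpha$ given in (\ref{W}) automatically satisfies a nonlocal maximum principle: if $v$ attains a global maximum at $(x_0,t_0)$, then each integrand in
\[
(\mathbf{W}_\alpha v)(x_0,t_0)=\int_{\mathbb{Q}_p^n}\frac{v(x_0-y,t_0)-v(x_0,t_0)}{w_\alpha(\|y\|_p)}d^n y
\]
is nonpositive, so $(\mathbf{W}_\alpha v)(x_0,t_0)\le 0$, and analogously for each $\mathbf{W}_{\alpha_k}$. The auxiliary function $\psi(x)$ constructed just before Lemma \ref{W(phi)<} is tailor-made for the job: fixing $\gamma$ with $\lambda<\gamma<\alpha_1-n$, we have $\psi(x)=\|x\|_p^\gamma$ for $\|x\|_p>p^{-L}$ (so $\psi$ grows strictly faster than any element of $\mathfrak{M}_\lambda$), $\psi$ is bounded below by a positive constant $c_0>0$, and Lemma \ref{W(phi)<} (combined with Lemma \ref{lemma1}) guarantees that $\mathbf{W}_\alpha\psi$ and $\mathbf{W}_{\alpha_k}\psi$ are uniformly bounded on $\mathbb{Q}_p^n$.

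As a preliminary step, I would substitute $\tilde u(x,t)=e^{-\mu t}u(x,t)$ with $\mu\geq \|b\|_\infty$ to reduce to the case $b(x,t)\geq 0$, since $\tilde u$ solves the same equation with $b$ replaced by $b+\mu\geq 0$, still with zero data. Then, for $\epsilon>0$ and $K>0$ to be chosen, set
\[
v(x,t)=u(x,t)-\epsilon e^{Kt}\psi(x).
\]
Since $u\in\mathfrak{M}_\lambda$ and $\gamma>\lambda$, we have $v(x,t)\to-\infty$ as $\|x\|_p\to\infty$ uniformly in $t\in[0,T]$; together with the continuity of $v$ in $(x,t)$ this guarantees $v$ attains its supremum on $\mathbb{Q}_p^n\times[0,T]$. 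Because $v(x,0)=-\epsilon\psi(x)\leq -\epsilon c_0<0$, if $\sup v>0$ the maximum must occur at some $(x_0,t_0)$ with $t_0>0$, where $\partial_t v(x_0,t_0)\ge0$ and $\mathbf{W}_\alpha v,\mathbf{W}_{\alpha_k}v\le 0$ as noted above.

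Using the equation for $u$ together with the linearity of the operators, one computes
\[
\partial_t v=a_0\mathbf{W}_\alpha v+\sum_{k=1}^N a_k\mathbf{W}_{\alpha_k}v-bv+\epsilon e^{Kt}\Bigl[a_0\mathbf{W}_\alpha\psi+\sum_{k=1}^N a_k\mathbf{W}_{\alpha_k}\psi-b\psi-K\psi\Bigr].
\]
Evaluating at $(x_0,t_0)$: since $a_0,a_k\geq 0$ the first two sums are $\le 0$, and since $b\ge 0$ and $v(x_0,t_0)>0$ we have $-bv\le 0$, so
\[
0\leq\partial_t v(x_0,t_0)\leq \epsilon e^{Kt_0}\Bigl[a_0\mathbf{W}_\alpha\psi+\sum a_k\mathbf{W}_{\alpha_k}\psi-b\psi-K\psi\Bigr](x_0,t_0).
\]
The bracket is bounded above by $\|a_0\|_\infty\|\mathbf{W}_\alpha\psi\|_\infty+\sum\|a_k\|_\infty\|\mathbf{W}_{\alpha_k}\psi\|_\infty-Kc_0$, which becomes strictly negative once $K$ is chosen large enough depending only on the coefficients and on $\psi$. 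This is the contradiction. Hence $v\le 0$ on $\mathbb{Q}_p^n\times[0,T]$, that is $u(x,t)\le\epsilon e^{Kt}\psi(x)$, and letting $\epsilon\to 0^+$ gives $u\le 0$. Applying the same argument to $-u$, which solves the same homogeneous problem by linearity, gives $u\equiv 0$.

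The main technical point is the verification that the bracket above is uniformly negative for $K$ large; this relies crucially on both the uniform boundedness of $\mathbf{W}_\alpha\psi$ and $\mathbf{W}_{\alpha_k}\psi$ supplied by Lemma \ref{W(phi)<} (which in turn depends on the strict inequalities $\gamma-\alpha+n<0$ and $\gamma-\alpha_k+n<0$) and on the strict positive lower bound for $\psi$. The other subtle point is ensuring $v$ actually attains its supremum, which is where the growth hypothesis $u\in\mathfrak{M}_\lambda$ with $\lambda<\gamma$ is essential.
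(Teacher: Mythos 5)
Your proof is correct and follows essentially the same route as the paper's: a nonlocal maximum-principle argument with the barrier $\psi$ of Lemma \ref{W(phi)<}, after first reducing to $b\geq 0$. The differences are cosmetic --- you perturb by $\epsilon e^{Kt}\psi$ and let $\epsilon\to 0^{+}$ where the paper uses $\rho t+\sigma\psi$ and derives a contradiction at a hypothetical point where $u<0$ --- and your explicit appeal to Lemma \ref{lemma1} to control $\mathbf{W}_{\alpha}\psi$ and $\mathbf{W}_{\alpha_{k}}\psi$ away from infinity is a welcome clarification.
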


\begin{proof}
We may assume that $b(x,t)\geq0$, otherwise we take $u(x,t)e^{\lambda t}$ with
$\lambda>b(x,t)$. We prove that $u(x,t)\geq0$. By contradiction, suppose that
$u(x^{\prime},t^{\prime})<0$, for \ some $x^{\prime}\in\mathbb{Q}_{p}^{n}$ and
$t^{\prime}\in(0,T]$. By Lemma \ref{W(phi)<},\ it follows that $(\mathbf{W}%
_{\alpha}\psi)(x)$ and $(\mathbf{W}_{\alpha_{k}}\psi)(x)\rightarrow0$\ as
$\left\Vert x\right\Vert _{p}\rightarrow\infty$, and thus%
\[
M:=\sup_{\substack{0\leq t\leq T,\\x\in\mathbb{Q}_{p}^{n}}}\left\{
a_{0}(x,t)\left\vert (\boldsymbol{\mathbf{W}}_{\alpha}\psi)(x)\right\vert
+\sum_{k=1}^{N}a_{k}(x,t)\left\vert (\boldsymbol{\mathbf{W}}_{\alpha_{k}}%
\psi)(x)\right\vert \right\}  <\infty.
\]
We pick $\rho>0$ such that $u(x^{\prime},t^{\prime})+T\rho<0$, and then
$\sigma>0$ such that
\begin{equation}
u(x^{\prime},t^{\prime})+T\rho+\sigma\psi(x^{\prime})<0 \label{v<0}%
\end{equation}%
\begin{equation}
\rho-\sigma M<0. \label{p-sigmaM<0}%
\end{equation}
We now consider the function%
\[
v(x,t):=u(x,t)+t\rho+\sigma\psi(x).
\]
From (\ref{v<0}), it follows that $v(x^{\prime},t^{\prime})<0$, so that%
\[
\inf_{\substack{0\leq t\leq T,\\x\in\mathbb{Q}_{p}^{n}}}v\left(  x,t\right)
<0.
\]
On the other hand, since\ $u(x,t)\in\mathcal{\mathfrak{M}}_{\lambda}%
$,\ $\underset{\left\Vert x\right\Vert _{p}\rightarrow\infty}{\lim}%
\frac{u(x,t)}{\psi(x)}=0$ and thus $\underset{\left\Vert x\right\Vert
_{p}\rightarrow\infty}{\lim}v(x,t)>0$ for any $t>0$. This implies that there
exist $\ x_{0}\in\mathbb{Q}_{p}^{n}$ and $t_{0}\in(0,T]$, such that%
\[
\inf_{\substack{0\leq t\leq T,\\x\in\mathbb{Q}_{p}^{n}}}v\left(  x,t\right)
=\min_{\substack{0\leq t\leq T,\\x\in\mathbb{Q}_{p}^{n}}}v(x,t)=v(x_{0}%
,t_{0})<0,
\]
and thus, by formula (\ref{W}), $(\mathbf{W}_{\alpha}v)(x_{0},t_{0}%
)\geq0,(\mathbf{W}_{\alpha_{k}}v)(x_{0},t_{0})\geq0$ for all $k$, and
$\frac{\partial v}{\partial t}(x_{0},t_{0})\leq0$, hence%
\[
\frac{\partial v}{\partial t}(x_{0},t_{0})-a_{0}(x,t)(\boldsymbol{\mathbf{W}%
}_{\alpha}v)(x_{0},t_{0})-\sum_{k=1}^{N}a_{k}(x,t)(\boldsymbol{\mathbf{W}%
}_{\alpha_{k}}v)(x_{0},t_{0})+b(x,t)v(x_{0},t_{0})<0.
\]
Now, by (\ref{p-sigmaM<0}),%
\begin{multline*}
\frac{\partial v}{\partial t}(x,t)-a_{0}(x,t)(\boldsymbol{\mathbf{W}}_{\alpha
}v)(x,t)-\sum_{k=1}^{N}a_{k}(x,t)(\boldsymbol{\mathbf{W}}_{\alpha_{k}%
}v)(x,t)+b(x,t)v(x,t)\\
=\rho-\sigma\left[  a_{0}(x,t)(\boldsymbol{\mathbf{W}}_{\alpha}\psi
)(x)+\sum_{k=1}^{N}a_{k}(x,t)(\boldsymbol{\mathbf{W}}_{\alpha_{k}}%
\psi)(x)\right]  +b(x,t)\left[  \rho t+\sigma\psi(x)\right] \\
\geq\rho-\sigma M>0.
\end{multline*}
We have obtained a contradiction, thus $u(x,t)\geq0$. Finally taking $-u(x,t)$
instead of $u(x,t)$, we conclude that $u(x,t)\equiv0$.
\end{proof}

\section{\label{sect6}Markov Processes}

In this section we show that the fundamental solution ${\LARGE \Lambda
}(x,t,\xi,\tau)$ of Cauchy problem (\ref{Cauchy2}) is the transition density
of a Markov process. We need some preliminary results.

\begin{lemma}
\label{lemma1_mp}If the coefficients $a_{k}(x,t)$ and $b(x,t)$ are
nonnegative, then%
\[
{\LARGE \Lambda}(x,t,\xi,\tau)\geq0.
\]

\end{lemma}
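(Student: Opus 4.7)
The plan is to prove nonnegativity of $\Lambda$ by a maximum-principle argument modeled on the proof of Theorem \ref{uniqueTheo}, applied to solutions of the homogeneous version of Cauchy problem (\ref{Cauchy2}), and then to pass from the nonnegativity of those solutions to the nonnegativity of the kernel by a density/continuity argument.

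First, fix $\tau_0 \in [0,T)$ and any nonnegative $\varphi \in \mathfrak{M}_\lambda$, and set
\[
u_\varphi(x,t) := \int_{\mathbb{Q}_p^n} \Lambda(x,t,\xi,\tau_0)\, \varphi(\xi)\, d^n\xi,
\]
which by Theorem \ref{mainTheo} is a solution of (\ref{Cauchy2}) with $f \equiv 0$ on $(\tau_0,T]$ and initial datum $\varphi$ at time $\tau_0$. I claim $u_\varphi \geq 0$. Suppose for contradiction $u_\varphi(x',t') < 0$ at some point. Following the scheme of Theorem \ref{uniqueTheo}, pick $\rho,\sigma > 0$ so small that $v(x,t) := u_\varphi(x,t) + (t-\tau_0)\rho + \sigma\psi(x)$ satisfies $v(x',t') < 0$ and $\rho - \sigma M < 0$, where
\[
M := \sup_{0\le t\le T,\, x\in\mathbb{Q}_p^n}\Bigl\{ a_0(x,t)|(\boldsymbol{W}_\alpha\psi)(x)| + \textstyle\sum_{k=1}^N a_k(x,t)|(\boldsymbol{W}_{\alpha_k}\psi)(x)|\Bigr\}
\]
is finite by Lemma \ref{W(phi)<} and the boundedness of the $a_k$. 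Because $\varphi,\psi \geq 0$, one has $v(x,\tau_0) \geq 0$; and because $u_\varphi \in \mathfrak{M}_\lambda$ with $\lambda < \gamma$, one has $v(x,t) \to +\infty$ as $\|x\|_p \to \infty$. Hence the minimum of $v$ on $[\tau_0,T] \times \mathbb{Q}_p^n$ is attained at some $(x_0,t_0)$ with $t_0 > \tau_0$ and $v(x_0,t_0) < 0$. At $(x_0,t_0)$, formula (\ref{W}) gives $(\boldsymbol{W}_\alpha v)(x_0,t_0) \geq 0$ and $(\boldsymbol{W}_{\alpha_k} v)(x_0,t_0) \geq 0$, and $\partial_t v(x_0,t_0) \leq 0$; substituting these inequalities into the equation satisfied by $u_\varphi$ and using $a_k, b \geq 0$ produces exactly the contradiction obtained at the end of the proof of Theorem \ref{uniqueTheo} (the left-hand side of the equation is $\leq 0$ at $(x_0,t_0)$ while a direct computation shows it equals $\rho - \sigma[a_0\boldsymbol{W}_\alpha\psi + \sum_k a_k \boldsymbol{W}_{\alpha_k}\psi] + b[(t-\tau_0)\rho+\sigma\psi] \geq \rho - \sigma M > 0$). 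Therefore $u_\varphi \geq 0$.

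To conclude $\Lambda(x,t,\xi,\tau_0) \geq 0$, I use continuity of $\Lambda(x,t,\cdot,\tau_0)$ in its third argument together with an approximation: if $\Lambda(x_1,t_1,\xi_1,\tau_0) < 0$ at some point with $t_1 > \tau_0$, pick a small ball $B$ around $\xi_1$ on which $\Lambda(x_1,t_1,\cdot,\tau_0) < 0$; then $\varphi := \mathbf{1}_B \in \mathfrak{M}_\lambda$ is nonnegative and produces $u_\varphi(x_1,t_1) < 0$, contradicting the first step. The case of general $\tau > 0$ reduces to $\tau = 0$ after a time-shift of the coefficients, and the other direction of the inequality is obtained by choosing $\varphi$ appropriately (or not needed, since we only want $\Lambda \geq 0$).

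The main obstacle will be the continuity of $\Lambda$ in the $\xi$-variable needed for the approximation argument. The term $Z(x-\xi,t-\tau;\xi,\tau)$ is manageable (continuous off the diagonal $\xi = x$ and with the correct singular behavior on it), but the term $\mathcal{W}(x,t,\xi,\tau)$ is more delicate. One combines the joint translation invariance $\Phi(x+\delta,t,\xi+\delta,\tau)=\Phi(x,t,\xi,\tau)$ for $\|\delta\|_p \leq p^{-L}$ (established in Claim D of the proof of Theorem \ref{mainTheo}) with the integrable bounds (\ref{Cotateo}) and dominated convergence, so that the continuity ultimately reduces to the continuity in $\xi$ of $Z(x-\eta,t-\theta;\eta,\theta)$ inside the defining integral for $\mathcal{W}$. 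Once this continuity is in place the rest of the argument is routine, so the heart of the lemma is the maximum-principle step above.
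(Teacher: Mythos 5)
Your argument is correct in substance and shares the paper's overall reduction---nonnegativity of ${\LARGE \Lambda}$ follows once one shows that solutions of the homogeneous problem with nonnegative initial data are nonnegative---but the minimum-principle step is executed differently. The paper takes $\varphi\in S(\mathbb{Q}_p^n)$, $\varphi\geq 0$, so that $u_\varphi(x,t)\to 0$ as $\left\Vert x\right\Vert_p\to\infty$; if $u_\varphi$ were negative somewhere, its infimum would be attained at some $(x_0,t_0)$, and there the signs of $\partial_t u$, $\mathbf{W}_\alpha u$, $\mathbf{W}_{\alpha_k}u$ together with the equation and the uniform parabolicity $a_0\geq\mu>0$ force $(\mathbf{W}_\alpha u)(x_0,t_0)=0$; formula (\ref{W}) then makes $u(\cdot,t_0)$ constant, hence identically zero by the decay, contradicting $u(x_0,t_0)<0$. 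You instead import the barrier construction of Theorem \ref{uniqueTheo}, perturbing by $(t-\tau_0)\rho+\sigma\psi$. This buys you the statement for arbitrary nonnegative $\varphi\in\mathfrak{M}_\lambda$ (no decay at infinity needed) at the cost of invoking Lemma \ref{W(phi)<}, and it avoids the rigidity step entirely; both routes are valid. Your explicit treatment of the passage from ``$u_\varphi\geq 0$ for all nonnegative $\varphi$'' to ``${\LARGE \Lambda}\geq 0$'' via indicators of small balls is more careful than the paper, which simply asserts the reduction; note also that local integrability of ${\LARGE \Lambda}(x,t,\cdot,\tau)$ plus the Lebesgue differentiation theorem already yields ${\LARGE \Lambda}\geq 0$ almost everywhere, so the continuity discussion you flag as the ``main obstacle'' is only needed if one insists on the pointwise inequality.

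One sign slip: you impose $\rho-\sigma M<0$ but then conclude with $\geq\rho-\sigma M>0$. The condition you actually need and use is $\rho-\sigma M>0$, i.e.\ $\sigma<\rho/M$, which is compatible with taking $\sigma$ small enough that $v(x^{\prime},t^{\prime})<0$; the same typo occurs in the paper's inequality (\ref{p-sigmaM<0}), so the intended argument is clear. The closing remark about reducing general $\tau$ to $\tau=0$ by a time shift is superfluous, since you already ran the argument for an arbitrary base time $\tau_0$.
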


\begin{proof}
It is sufficient to show that%
\[
u(x,t)=%
%TCIMACRO{\dint \limits_{\mathbb{Q}_{p}^{n}}}%
%BeginExpansion
{\displaystyle\int\limits_{\mathbb{Q}_{p}^{n}}}
%EndExpansion
{\LARGE \Lambda}(x,t,\xi,\tau)\varphi(\xi)d^{n}\xi\geq0,
\]
where $u(x,t)$ is the solution of Cauchy problem (\ref{Cauchy2}) with
$f(x,t)\equiv0$, and initial condition $u(x,0)=\varphi(x)\geq0$ with$\ \varphi
\in S(\mathbb{Q}_{p}^{n})$. From (\ref{Gamma}), (\ref{Cotateo}), and Lemma
\ref{lemmaZ} (iii), it follows that
\begin{equation}
u(x,t)\rightarrow0\text{ as }\left\Vert x\right\Vert _{p}\rightarrow\infty.
\label{Eqlim}%
\end{equation}

Now, if $u(x,t)<0$, then there exist $x_{0}\in\mathbb{Q}_{p}^{n}$ and
$t_{0}\in(0,T]$ such that%
\begin{equation}
\inf_{\substack{0\leq t\leq T,\\x\in\mathbb{Q}_{p}^{n}}}u\left(  x,t\right)
=u(x_{0},t_{0})<0. \label{Eqinf}%
\end{equation}
This implies that $(\mathbf{W}_{\alpha}u)(x_{0},t_{0})\geq0$, $(\mathbf{W}%
_{\alpha_{k}}u)(x_{0},t_{0})\geq0$ for all $k$,\ and $\frac{\partial
u}{\partial t}(x_{0},t_{0})\leq0$. On the other hand,
\[
\frac{\partial u}{\partial t}(x,t)-a_{0}(x,t)(\boldsymbol{\mathbf{W}}_{\alpha
}u)(x,t)-\sum_{k=1}^{N}a_{k}(x,t)(\boldsymbol{\mathbf{W}}_{\alpha_{k}%
}u)(x,t)=0.
\]
By using the uniform parabolicity condition $a_{0}(x,t)\geq\mu>0$, we get
$(\mathbf{W}_{\alpha}u)(x_{0},t_{0})$ $=0$,\ then by (\ref{W}), $u(x,t_{0}%
)$\ is constant, and by (\ref{Eqlim}), $u(x,t_{0})\equiv0$, which contradicts
(\ref{Eqinf}).
\end{proof}

\begin{lemma}
\label{lemma2_mp}If $b(x,t)\equiv0$, then%
\[%
%TCIMACRO{\dint \limits_{\mathbb{Q}_{p}^{n}}}%
%BeginExpansion
{\displaystyle\int\limits_{\mathbb{Q}_{p}^{n}}}
%EndExpansion
{\LARGE \Lambda}(x,t,\xi,\tau)d^{n}\xi=1.
\]

\end{lemma}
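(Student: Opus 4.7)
The plan is a standard uniqueness argument: when $b \equiv 0$, the constant function $w(x,t) \equiv 1$ and the function $v(x,t) := \int_{\mathbb{Q}_p^n} {\LARGE \Lambda}(x,t,\xi,\tau) d^n\xi$ both solve the same (time-shifted) Cauchy problem (\ref{Cauchy2}) with $f \equiv 0$ and initial data $\varphi \equiv 1$ at $t = \tau$, so they must coincide.

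First I would verify that $w \equiv 1$ is a solution: the integrand $w(x-y) - w(x)$ vanishes identically, so formula (\ref{W}) gives $\mathbf{W}_\alpha w = 0$ and $\mathbf{W}_{\alpha_k} w = 0$ for each $k$; together with $\partial_t w = 0$ and $b \equiv 0$, the PDE in (\ref{Cauchy2}) is satisfied, and clearly $w \in \mathfrak{M}_0 \subset \mathfrak{M}_\lambda$. Next I would check that $v$ is likewise a solution: since $\varphi \equiv 1$ lies in $\mathfrak{M}_0 \subset \mathfrak{M}_\lambda$, the construction of Theorem \ref{mainTheo} (applied with starting time $\tau$, which only amounts to redoing the Levi construction on $[\tau, T]$ — the kernel estimates (\ref{Est1})--(\ref{Est8}) and Lemma \ref{lema J<} are insensitive to the choice of starting time) produces $v$ as a member of $\mathfrak{M}_\lambda$ uniformly in $t$ that solves the Cauchy problem with initial data $1$ at $t = \tau$ and vanishing forcing term.

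Finally I would apply uniqueness to the difference $v - w$, which lies in $\mathfrak{M}_\lambda$ and solves the homogeneous Cauchy problem with vanishing initial data at $t = \tau$. The main obstacle is that Theorem \ref{uniqueTheo} is formulated with initial condition at $t = 0$, but its proof is translation-invariant in time: the barrier $(v-w)(x,t) + (t-\tau)\rho + \sigma\psi(x)$ and the contradiction extracted at an interior minimum work verbatim on $[\tau, T]$, since nowhere does the argument use the specific value of the starting time. This yields $v \equiv w$, i.e. the claimed identity $\int_{\mathbb{Q}_p^n} {\LARGE \Lambda}(x,t,\xi,\tau) d^n\xi = 1$.
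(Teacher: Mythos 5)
Your argument is correct in the setting where the lemma is actually used, but it takes a genuinely different route from the paper's, and it quietly imports an extra hypothesis. The paper proves the identity by direct computation: it integrates the equation (\ref{Gamma1}) satisfied by ${\LARGE \Lambda}$ with respect to $\xi$ and uses Lemma \ref{lemmaWZ} (iii) (together with $b\equiv 0$) to conclude that $\int_{\mathbb{Q}_p^n}{\LARGE \Lambda}(x,t,\xi,\tau)\,d^n\xi$ is independent of $t$; it then integrates the representation (\ref{Gamma}) in $\xi$, applies Lemma \ref{lemmaZ} (iv), and lets $t\rightarrow\tau$ to identify the constant as $1$. This needs nothing beyond the standing assumptions of Section 5. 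Your route instead identifies $v(x,t)=\int_{\mathbb{Q}_p^n}{\LARGE \Lambda}(x,t,\xi,\tau)\,d^n\xi$ as the Theorem \ref{mainTheo} solution with datum $\varphi\equiv 1\in\mathfrak{M}_{0}$ posed at time $\tau$, and compares it with the constant solution via Theorem \ref{uniqueTheo}; the time-shift is harmless (the paper itself poses Cauchy problems at shifted initial times in the proof of Lemma \ref{lemma3_mp}), and your verification that $\mathbf{W}_{\alpha}1=\mathbf{W}_{\alpha_k}1=0$ is fine. The price is that Theorem \ref{uniqueTheo} requires all the coefficients $a_k$ to be nonnegative, a hypothesis that does not appear in the statement of Lemma \ref{lemma2_mp} (which assumes only $b\equiv 0$). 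So your proof establishes the lemma only under that additional sign condition --- sufficient for Theorem \ref{Thm2}, where the lemma is applied alongside Lemmas \ref{lemma1_mp} and \ref{lemma3_mp}, but strictly weaker than what the paper proves. Either state the extra hypothesis explicitly or switch to the direct integration argument if you want the lemma in the generality claimed.
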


\begin{proof}
By integrating (\ref{Gamma1}) in the variable $\xi$ over whole the
space\ $\mathbb{Q}_{p}^{n},$ and by using Lemma \ref{lemmaWZ} (iii), we have%
\[
\frac{\partial}{\partial t}\left(
%TCIMACRO{\dint \limits_{\mathbb{Q}_{p}^{n}}}%
%BeginExpansion
{\displaystyle\int\limits_{\mathbb{Q}_{p}^{n}}}
%EndExpansion
{\LARGE \Lambda}(x,t,\xi,\tau)d^{n}\xi\right)  =0,
\]
thus $\int_{\mathbb{Q}_{p}^{n}}{\LARGE \Lambda}(x,t,\xi,\tau)d^{n}\xi$ is
independent of $t$. Now, by integrating (\ref{Gamma}) over whole space
$\mathbb{Q}_{p}^{n}$ in variable $\xi$ and by using Lemma \ref{lemmaZ} (iv),
we have%
\[%
%TCIMACRO{\dint \limits_{\mathbb{Q}_{p}^{n}}}%
%BeginExpansion
{\displaystyle\int\limits_{\mathbb{Q}_{p}^{n}}}
%EndExpansion
{\LARGE \Lambda}(x,t,\xi,\tau)d^{n}\xi=1+%
%TCIMACRO{\dint \limits_{\tau}^{t}}%
%BeginExpansion
{\displaystyle\int\limits_{\tau}^{t}}
%EndExpansion
\underset{\mathbb{Q}_{p}^{n}}{\int}\underset{\mathbb{Q}_{p}^{n}}{\int}%
Z(x-\eta,t-\theta,\eta,\theta)\phi(\eta,\theta,\xi,\tau)d^{n}\eta d^{n}\xi
d\theta.
\]
The result is obtained by taking $t=\tau$ in the above formula.
\end{proof}

\begin{lemma}
\label{lemma3_mp}If $b(x,t)\equiv0$ and $f(x,t)\equiv0$, then the function
${\LARGE \Lambda}(x,t,\xi,\tau)$ satisfies the following property:%
\begin{equation}
{\LARGE \Lambda}(x,t,\xi,\tau)=%
%TCIMACRO{\dint \limits_{\mathbb{Q}_{p}^{n}}}%
%BeginExpansion
{\displaystyle\int\limits_{\mathbb{Q}_{p}^{n}}}
%EndExpansion
{\LARGE \Lambda}(x,t,y,\sigma){\LARGE \Lambda}(y,\sigma,\xi,\tau)d^{n}y.
\label{IdentityL}%
\end{equation}

\end{lemma}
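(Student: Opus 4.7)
The plan is to invoke the uniqueness result, Theorem \ref{uniqueTheo}, after showing that the two sides of (\ref{IdentityL}) are both solutions of the \emph{same} Cauchy problem on the time interval $[\sigma,T]$. Fix $\xi\in\mathbb{Q}_p^n$ and $0\le\tau<\sigma<T$, and set
\[
w(x,t):={\LARGE \Lambda}(x,t,\xi,\tau),\qquad v(x,t):=\int_{\mathbb{Q}_p^n}{\LARGE \Lambda}(x,t,y,\sigma){\LARGE \Lambda}(y,\sigma,\xi,\tau)\,d^{n}y,
\]
both viewed for $t\in[\sigma,T]$. The identity (\ref{IdentityL}) will follow at once if we show that $v\equiv w$ on $\mathbb{Q}_p^n\times[\sigma,T]$ by verifying that $w-v$ solves Cauchy problem (\ref{Cauchy2}) with $f\equiv 0$, $\varphi\equiv 0$ (and $b\equiv 0$, already assumed), and then applying Theorem \ref{uniqueTheo}.

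First I would check that the common initial datum at time $t=\sigma$ is the function $y\mapsto{\LARGE \Lambda}(y,\sigma,\xi,\tau)$, and that this datum belongs to $\mathcal{\mathfrak M}_{\lambda}$. The bound (\ref{Cotateo}) together with the estimate for $Z$ in (\ref{Est1}) shows that $y\mapsto{\LARGE \Lambda}(y,\sigma,\xi,\tau)$ is bounded by a constant depending on $\sigma-\tau$ and decays like $\|y-\xi\|_p^{-\alpha_1}$ at infinity, so it lies in $\mathcal{\mathfrak M}_{0}\subset\mathcal{\mathfrak M}_{\lambda}$. Local constancy in $y$ (away from $y=\xi$, which is irrelevant since we are on $[\sigma,T]$ and the kernel is continuous there) follows from Claim~D in the proof of Theorem~\ref{mainTheo}. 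Thus $w(\cdot,t)$ solves (\ref{Cauchy2}) with $f\equiv 0$ and initial datum ${\LARGE \Lambda}(\cdot,\sigma,\xi,\tau)$ at time $\sigma$, simply because it is the fundamental solution evaluated from time $\tau$ and the semigroup evolves from any intermediate time $\sigma$ onwards.

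Next I would show the same for $v$. By the representation (\ref{u(x,t)}) with $f\equiv 0$ and with initial datum $\varphi(y):={\LARGE \Lambda}(y,\sigma,\xi,\tau)\in\mathcal{\mathfrak M}_{\lambda}$, applied on the time interval $[\sigma,T]$ (so the role of $0$ is played by $\sigma$), the function $v$ coincides with $\int{\LARGE \Lambda}(x,t,y,\sigma)\varphi(y)d^ny$, which is the solution furnished by Theorem~\ref{mainTheo}. In particular $v$ is differentiable in $t$, lies in $\mathcal{\mathfrak M}_{\lambda}$ uniformly in $t$, satisfies the same PDE on $(\sigma,T]$, and satisfies $\lim_{t\to\sigma^{+}}v(x,t)=\varphi(x)={\LARGE \Lambda}(x,\sigma,\xi,\tau)$; the limit is precisely the initial-condition argument at the end of the proof of Theorem~\ref{mainTheo}, whose essential input is the asymptotic delta behavior of $Z(x-y,t-\sigma;y,\sigma)$ guaranteed by (\ref{Est1}) and (\ref{Est4}).

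Consequently, $u:=w-v$ satisfies Cauchy problem (\ref{Cauchy2}) on $[\sigma,T]$ with $f\equiv 0$ and zero initial datum, and belongs to $\mathcal{\mathfrak M}_{\lambda}$ uniformly in $t$; Theorem~\ref{uniqueTheo} then forces $u\equiv 0$, which is (\ref{IdentityL}). The main obstacle I anticipate is the rigorous verification that $v$ actually belongs to $\mathcal{\mathfrak M}_{\lambda}$ uniformly in $t$ and has a locally constancy parameter independent of $t$, so that the hypotheses of Theorem~\ref{uniqueTheo} are genuinely met; this requires combining the decay estimates (\ref{Cotateo}) and (\ref{Est1})--(\ref{Est3}) with Proposition~\ref{Conv}, and reproducing the translation-invariance argument of Claim~D of Theorem~\ref{mainTheo} to control the local constancy parameter of $y\mapsto{\LARGE \Lambda}(y,\sigma,\xi,\tau)$ uniformly in the parameters.
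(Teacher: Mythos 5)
Your plan (Chapman--Kolmogorov via the uniqueness theorem applied over the intermediate time $\sigma$) is the same idea the paper uses, but your execution has a genuine gap: you apply Theorem \ref{uniqueTheo} directly to $w(x,t)={\LARGE \Lambda}(x,t,\xi,\tau)$ and to $v$, and that theorem requires the solution to lie in $\mathcal{\mathfrak{M}}_{\lambda}$, i.e.\ to be locally constant in $x$ with a \emph{uniform} parameter of local constancy. The kernel fails this at $x=\xi$. Indeed, by Remark \ref{Remark} the heat kernel $Z_t(x)$ is locally constant only on $\mathbb{Q}_{p}^{n}\setminus\{0\}$ (being radial), and since it is the inverse Fourier transform of the nowhere-vanishing, non-compactly-supported function $e^{-\kappa tA_{w_{\alpha}}(\left\Vert \xi\right\Vert _{p})}$, it is continuous but \emph{not} constant on any ball around the origin; its radius of local constancy at $x$ degenerates as $x\rightarrow 0$. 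Hence $x\mapsto{\LARGE \Lambda}(x,t,\xi,\tau)$ is not in $\widetilde{\mathcal{E}}$, so not in $\mathcal{\mathfrak{M}}_{\lambda}$, and Theorem \ref{uniqueTheo} does not apply to $w-v$. The same objection hits your initial datum $y\mapsto{\LARGE \Lambda}(y,\sigma,\xi,\tau)$: continuity at $y=\xi$ is not local constancy, and Claim~D does not rescue you, since it only gives invariance of $\Phi$ under the \emph{simultaneous} translation $(x,\xi)\mapsto(x+\delta,\xi+\delta)$, not local constancy in one argument with the other fixed.

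The paper's proof avoids exactly this difficulty by working in duality: it fixes an arbitrary $\varphi\in S(\mathbb{Q}_p^n)$, evolves it from $\tau$ to $\sigma$ and then from $\sigma$ to $t$ (problems (\ref{Cauchy2.5})--(\ref{Cauchy2.6})), and compares with the one-step evolution (\ref{Cauchy2.7}). The integrated functions $\int{\LARGE \Lambda}(x,t,\xi,\tau)\varphi(\xi)d^{n}\xi$ and $\int\bigl(\int{\LARGE \Lambda}(x,t,y,\sigma){\LARGE \Lambda}(y,\sigma,\xi,\tau)d^{n}y\bigr)\varphi(\xi)d^{n}\xi$ \emph{are} in $\mathcal{\mathfrak{M}}_{\lambda}$ (this is precisely what Claim~D plus the integration over $\xi$ delivers), so Theorem \ref{uniqueTheo} applies to their difference; since $\varphi$ is an arbitrary test function and both kernels are continuous in $\xi$, the pointwise identity (\ref{IdentityL}) follows. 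To repair your argument you should insert this smoothing-by-$\varphi$ step rather than applying uniqueness to the kernels themselves.
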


\begin{proof}
Consider the following initial value problem:%
\begin{equation}
\left\{
\begin{array}
[c]{l}%
\frac{\partial u}{\partial t}(x,t)-a_{0}(x,t)(\boldsymbol{\mathbf{W}}_{\alpha
}u)(x,t)-(\widetilde{\boldsymbol{W}}u)(x,t)=0\\
\\
u\left(  x,\tau\right)  =\varphi(x),\text{\ }x\in\mathbb{Q}_{p}^{n}\text{ and
}t\in(\tau,\sigma],
\end{array}
\right.  \label{Cauchy2.5}%
\end{equation}
by Theorem \ref{mainTheo}, $u(x,\sigma)=\int_{\mathbb{Q}_{p}^{n}%
}{\LARGE \Lambda}(x,\sigma,\xi,\tau)\varphi(\xi)d^{n}\xi$. Now consider%
\begin{equation}
\left\{
\begin{array}
[c]{l}%
\frac{\partial u}{\partial t}(x,t)-a_{0}(x,t)(\boldsymbol{\mathbf{W}}_{\alpha
}u)(x,t)-(\widetilde{\boldsymbol{W}}u)(x,t)=0\\
\\
u\left(  x,\sigma\right)  =%
%TCIMACRO{\dint \limits_{\mathbb{Q}_{p}^{n}}}%
%BeginExpansion
{\displaystyle\int\limits_{\mathbb{Q}_{p}^{n}}}
%EndExpansion
{\LARGE \Lambda}(x,\sigma,\xi,\tau)\varphi(\xi)d^{n}\xi,\text{\ }%
x\in\mathbb{Q}_{p}^{n}\text{, }t\in(\sigma,T],\text{ with \ }\tau<\sigma<T,
\end{array}
\right.  \label{Cauchy2.6}%
\end{equation}
by Theorem \ref{mainTheo} and Fubini's Theorem, the solution of
(\ref{Cauchy2.6}) is given by%
\[
u(x,t)=%
%TCIMACRO{\dint \limits_{\mathbb{Q}_{p}^{n}}}%
%BeginExpansion
{\displaystyle\int\limits_{\mathbb{Q}_{p}^{n}}}
%EndExpansion
\left(
%TCIMACRO{\dint \limits_{\mathbb{Q}_{p}^{n}}}%
%BeginExpansion
{\displaystyle\int\limits_{\mathbb{Q}_{p}^{n}}}
%EndExpansion
{\LARGE \Lambda}(x,t,y,\sigma){\LARGE \Lambda}(y,\sigma,\xi,\tau
)d^{n}y\right)  \varphi(\xi)d^{n}\xi.
\]
On the other hand, (\ref{Cauchy2.6}) is equivalent to%
\begin{equation}
\left\{
\begin{array}
[c]{l}%
\frac{\partial u}{\partial t}(x,t)-a_{0}(x,t)(\boldsymbol{\mathbf{W}}_{\alpha
}u)(x,t)-(\widetilde{\boldsymbol{W}}u)(x,t)=0\\
\\
u\left(  x,\tau\right)  =\varphi(x),\text{ \ }x\in\mathbb{Q}_{p}^{n}\text{
},\text{ }t\in\left(  \tau,T\right]  ,
\end{array}
\right.  \label{Cauchy2.7}%
\end{equation}
which has solution given by $u\left(  x,t\right)  =\int_{\mathbb{Q}_{p}^{n}%
}{\LARGE \Lambda}(x,t,\xi,\tau)\varphi(\xi)d^{n}\xi$. Now, by Theorem
\ref{uniqueTheo},%
\[%
%TCIMACRO{\dint \limits_{\mathbb{Q}_{p}^{n}}}%
%BeginExpansion
{\displaystyle\int\limits_{\mathbb{Q}_{p}^{n}}}
%EndExpansion
{\LARGE \Lambda}(x,t,\xi,\tau)\varphi(\xi)d^{n}\xi=%
%TCIMACRO{\dint \limits_{\mathbb{Q}_{p}^{n}}}%
%BeginExpansion
{\displaystyle\int\limits_{\mathbb{Q}_{p}^{n}}}
%EndExpansion
\left(
%TCIMACRO{\dint \limits_{\mathbb{Q}_{p}^{n}}}%
%BeginExpansion
{\displaystyle\int\limits_{\mathbb{Q}_{p}^{n}}}
%EndExpansion
{\LARGE \Lambda}(x,t,y,\sigma){\LARGE \Lambda}(y,\sigma,\xi,\tau
)d^{n}y\right)  \varphi(\xi)d^{n}\xi,
\]
for any test function $\varphi$, which implies (\ref{IdentityL}).
\end{proof}

\begin{theorem}
\label{Thm2}If the coefficients $a_{k}(x,t),$ $k=1,\cdots,N$ are nonnegative
bounded continuous functions, $b(x,t)\equiv0$, $0\leq\lambda<\alpha_{1}-n$ (if
$a_{1}(x,t)=\cdots$ $a_{k}(x,t)\equiv0,$ we shall \ suppose that $0\leq
\lambda<\alpha-n$), and $f(x,t)\equiv0$, then the fundamental solution
${\LARGE \Lambda}(x,t,\xi,\tau)$ is the transition density of a bounded
right-continuous Markov process without second kind discontinuities.
\end{theorem}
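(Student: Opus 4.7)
The plan is to invoke the standard construction of a Markov process from a transition density together with Dynkin's regularity criterion. The three lemmas immediately preceding the theorem supply precisely the conditions needed to call ${\LARGE \Lambda}(x,t,\xi,\tau)$ a (sub-)Markovian transition probability density. Namely, Lemma \ref{lemma1_mp} gives non-negativity, Lemma \ref{lemma2_mp} gives the normalization $\int_{\mathbb{Q}_p^n}{\LARGE \Lambda}(x,t,\xi,\tau)d^n\xi=1$ (since $b\equiv 0$), and Lemma \ref{lemma3_mp} gives the Chapman--Kolmogorov identity. From these, Kolmogorov's extension theorem produces a Markov process $X_t$ on $\mathbb{Q}_p^n$ whose transition probability is $P(\tau,x;t,B)=\int_B {\LARGE \Lambda}(x,t,\xi,\tau)d^n\xi$.

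The nontrivial part is the path regularity, and here I would appeal to Dynkin's criterion (cf.\ Dynkin, \emph{Markov Processes}, Vol.~I, Ch.~3): a Markov process on a separable locally compact metric space whose transition probability $P(\tau,x;t,\cdot)$ satisfies
\begin{equation*}
\lim_{t-\tau\to 0^{+}}\;\sup_{x\in\mathbb{Q}_p^n}\; P\bigl(\tau,x;t,\{\xi:\|x-\xi\|_p>\varepsilon\}\bigr)=0\quad\text{for every }\varepsilon>0
\end{equation*}
admits a modification that is right-continuous and has no discontinuities of the second kind. Thus the whole theorem reduces to verifying this uniform continuity condition for ${\LARGE \Lambda}$.

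To verify it, I would use the decomposition (\ref{Gamma}) together with the estimates from Theorem \ref{mainTheo}, namely (\ref{Est1}) for $Z$ and (\ref{Cotateo}) for $\mathcal{W}$. For $\|x-\xi\|_p>\varepsilon$ and $0<t-\tau<1$, the bound (\ref{Est1}) gives
\begin{equation*}
\int_{\|x-\xi\|_p>\varepsilon}Z(x-\xi,t-\tau;\xi,\tau)\,d^n\xi \leq C(t-\tau)\int_{\|y\|_p>\varepsilon}\|y\|_p^{-\alpha}\,d^ny,
\end{equation*}
and the right-hand integral is finite because $\alpha>n$; hence this term is $O(t-\tau)$ uniformly in $x$. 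For the remainder $\mathcal{W}$, each summand in (\ref{Cotateo}) is estimated analogously: for any exponent $\alpha_k$ with $\alpha_k>n$ (which holds since $\alpha_k>\alpha_1>n$, and $\alpha_{N+1}\leq\alpha$), the corresponding integral over $\{\|x-\xi\|_p>\varepsilon\}$ is bounded by $C(t-\tau)^{1+\delta_k}\int_{\|y\|_p>\varepsilon}\|y\|_p^{-\alpha_k}d^ny$ for some $\delta_k\geq 0$, and thus tends to zero uniformly in $x$ as $t-\tau\to 0^+$.

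The main obstacle will be bookkeeping through the various exponents $\alpha_k$ and the perturbation term $\mathcal{W}$ to ensure that each contribution truly vanishes uniformly in $x$ as $t\to\tau^+$; in particular one must note that the first summand of (\ref{Cotateo}) carries the factor $(t-\tau)^{2-\lambda/(\alpha-n)}$, which under the standing hypothesis $\lambda<\alpha_1-n\leq \alpha-n$ is a positive power of $t-\tau$, so it vanishes in the limit. Once Dynkin's uniform-continuity condition is checked, the conclusion of the theorem follows immediately from the cited criterion.
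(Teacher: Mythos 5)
Your proposal is correct and follows essentially the same route as the paper: non-negativity, normalization, and Chapman--Kolmogorov from Lemmas \ref{lemma1_mp}--\ref{lemma3_mp}, followed by Dynkin's criterion (\cite[Theorem 3.6]{Dyn}) verified via the decomposition (\ref{Gamma}) and the estimates (\ref{Est1})/(\ref{Cotateo}). In fact you supply more detail on the uniform-continuity check than the paper, whose proof is a one-line citation of the same ingredients.
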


\begin{proof}
The result follows from \cite[Theorem 3.6]{Dyn} by using Lemmas
(\ref{lemma1_mp})-(\ref{lemma2_mp})-(\ref{lemma3_mp}), and (\ref{Gamma}%
)-(\ref{Cotateo}), and Lemma \ref{lemmaZ} (iii).
\end{proof}

\section{The Cauchy Problem is Well-Posed}

In this section, we study the continuity of the solution of Cauchy problem
(\ref{Cauchy2}) with respect to $\varphi\left(  x\right)  $\ and $f\left(
x,t\right)  $. We assume that the coefficients $a_{k}(x,t),$ $k=0,1,\ldots,N$
are nonnegative bounded continuous functions, $b(x,t)$ is a bounded continuous
function, $0\leq\lambda<\alpha_{1}-n$ (if $a_{1}(x,t)=\ldots=a_{k}%
(x,t)\equiv0$, we shall \ suppose that $0\leq\lambda<\alpha-n$),
$\varphi\left(  x\right)  \in\mathfrak{M}_{\lambda}$\ and $f\left(
x,t\right)  \in\mathfrak{M}_{\lambda}$, uniformly in $t$, with \ $0\leq
\lambda<\alpha_{1}-n$.

We identify $\mathfrak{M}_{\lambda}$ with the $\mathbb{R}$-vector space of all
the functions \textquotedblleft$\phi\left(  x,t\right)  \in\mathfrak{M}%
_{\lambda}$, uniformly in $t$,\textquotedblright\ and introduce on
$\mathfrak{M}_{\lambda}$ the following norm:%
\[
\left\Vert \phi\right\Vert _{\mathfrak{M}_{\lambda}}:=\sup_{t\in\left[
0,T\right]  }\sup_{x\in\mathbb{Q}_{p}^{n}}\left\vert \frac{\phi\left(
x,t\right)  }{1+\left\Vert x\right\Vert _{p}^{\lambda}}\right\vert .
\]
From now on, we consider $\mathfrak{M}_{\lambda}$ as \ topological vector
space with the topology induced by $\left\Vert \cdot\right\Vert _{\mathfrak{M}%
_{\lambda}}$. We also consider $\mathfrak{M}_{\lambda}\times\mathfrak{M}%
_{\lambda}$\ as topological vector space with the topology induced by the norm
$\left\Vert \cdot\right\Vert _{\mathfrak{M}_{\lambda}}+\left\Vert
\star\right\Vert _{\mathfrak{M}_{\lambda}}$.

\begin{theorem}
\label{Thm3}With \ the above hypotheses, consider the following operator:%
\[%
\begin{array}
[c]{ccc}%
\mathfrak{M}_{\lambda}\times\mathfrak{M}_{\lambda} & \underrightarrow
{\boldsymbol{L}} & \mathfrak{M}_{\lambda}\\
&  & \\
\left(  \varphi\left(  x\right)  ,f\left(  x,t\right)  \right)  & \rightarrow
& u\left(  x,t\right)  ,
\end{array}
\]
where $u\left(  x,t\right)  $ is given by (\ref{u(x,t)}). Then $\left\Vert
u\left(  x,t\right)  \right\Vert _{\mathfrak{M}_{\lambda}}\leq C\left(
\left\Vert \varphi\left(  x\right)  \right\Vert _{\mathfrak{M}_{\lambda}%
}+\left\Vert f\left(  x,t\right)  \right\Vert _{\mathfrak{M}_{\lambda}%
}\right)  $, i.e. $\boldsymbol{L}$ is a continuous operator.
\end{theorem}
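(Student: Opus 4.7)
The plan is to exploit the explicit representation (\ref{u(x,t)}) together with the decomposition $\Lambda = Z + \mathcal{W}$ of (\ref{Gamma}), the pointwise bounds (\ref{Est1})--(\ref{Cotateo}), and Proposition \ref{Conv}. Since $(\varphi,f) \mapsto u$ is manifestly $\mathbb{R}$-linear, continuity is equivalent to the claimed operator norm bound. I would split $u = u_\varphi + u_f$ corresponding to the two summands in (\ref{u(x,t)}) and estimate each separately.

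For the initial-data term $u_\varphi(x,t) := \int_{\mathbb{Q}_p^n}\Lambda(x,t,\xi,0)\varphi(\xi)d^n\xi$, I would use $|\varphi(\xi)| \leq \|\varphi\|_{\mathfrak{M}_\lambda}(1+\|\xi\|_p^\lambda)$ and bound $\Lambda$ via (\ref{Est1}) and (\ref{Cotateo}). The workhorse spatial estimate is a rewriting of Proposition \ref{Conv} (taking the $\alpha$ in that statement to be $\beta-n$):
\[
\int_{\mathbb{Q}_p^n}(b+\|x-\xi\|_p)^{-\beta}\|\xi\|_p^\lambda\, d^n\xi \leq C b^{-(\beta-n)}(1+\|x\|_p^\lambda), \qquad \lambda < \beta - n,
\]
applied for $\beta \in \{\alpha,\alpha_1,\ldots,\alpha_{N+1}\}$; the hypothesis $\lambda < \alpha_1 - n$ validates every case. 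Substituting $b = t^{1/(\alpha-n)}$ and combining with the $t$-prefactors in (\ref{Est1})--(\ref{Cotateo}), every surviving power of $t$ has nonnegative exponent on $[0,T]$, so $|u_\varphi(x,t)| \leq C_1(T)(1+\|x\|_p^\lambda)\|\varphi\|_{\mathfrak{M}_\lambda}$.

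For the source term $u_f(x,t) := \int_0^t \int_{\mathbb{Q}_p^n}\Lambda(x,t,\xi,\tau)f(\xi,\tau)d^n\xi\, d\tau$, the same $\xi$-integration produces residual $(t-\tau)$-exponents: $0$ from the $Z$-part, $1-\lambda/(\alpha-n)$ from the leading term of $\mathcal{W}$, and $(\alpha-\alpha_k)/(\alpha-n)$ for $k = 1,\ldots,N+1$ from the $\mathcal{W}$-sum. All three are strictly positive (using $\lambda < \alpha-n$ and $\alpha_k < \alpha$), so the subsequent $\tau$-integration over $[0,t]$ is finite and bounded by a constant depending only on $T$, giving $|u_f(x,t)| \leq C_2(T)(1+\|x\|_p^\lambda)\|f\|_{\mathfrak{M}_\lambda}$. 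Adding the two estimates and dividing by $1+\|x\|_p^\lambda$ yields the theorem.

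The principal (though essentially bookkeeping) obstacle will be verifying that every exponent of $(t-\tau)$ that arises after the spatial integration exceeds $-1$, so that the $\tau$-integral converges uniformly on $[0,T]$. This is exactly what the standing hypothesis $\lambda < \alpha_1-n$ (respectively $\lambda < \alpha-n$ in the degenerate case $a_1 \equiv \cdots \equiv a_N \equiv 0$) together with $\alpha_k < \alpha$ for $k = 1,\ldots,N+1$ guarantee; in fact all the relevant exponents turn out to be nonnegative, so the bound $C(T)$ will be polynomial in $T$.
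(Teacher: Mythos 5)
Your proposal is correct and follows essentially the same route as the paper: the same splitting of $u$ into the source and initial-data terms, the same pointwise bounds (\ref{Est1}) and (\ref{Cotateo}) on $\Lambda$, the same application of Proposition \ref{Conv} with $b=(t-\tau)^{1/(\alpha-n)}$, and the same verification that the resulting $(t-\tau)$-exponents are nonnegative (note only that the exponent coming from the $Z$-part is $0$, not strictly positive, as you yourself correct in your final sentence).
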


\begin{proof}
We write $u(x,t)=u_{1}(x,t)+u_{2}(x,t)$ where%
\[
u_{1}(x,t)=%
%TCIMACRO{\dint \limits_{0}^{t}}%
%BeginExpansion
{\displaystyle\int\limits_{0}^{t}}
%EndExpansion%
%TCIMACRO{\dint \limits_{\mathbb{Q}_{p}^{n}}}%
%BeginExpansion
{\displaystyle\int\limits_{\mathbb{Q}_{p}^{n}}}
%EndExpansion
{\LARGE \Lambda}(x,t,\xi,\tau)f(\xi,\tau)d^{n}\xi d\tau\text{ \ and\ }%
u_{2}(x,t)=%
%TCIMACRO{\dint \limits_{\mathbb{Q}_{p}^{n}}}%
%BeginExpansion
{\displaystyle\int\limits_{\mathbb{Q}_{p}^{n}}}
%EndExpansion
{\LARGE \Lambda}(x,t,\xi,0)\varphi(\xi)d^{n}\xi
\]
as before. Now%
\begin{multline*}
\left\vert u_{1}(x,t)\right\vert \leq%
%TCIMACRO{\dint \limits_{0}^{t}}%
%BeginExpansion
{\displaystyle\int\limits_{0}^{t}}
%EndExpansion%
%TCIMACRO{\dint \limits_{\mathbb{Q}_{p}^{n}}}%
%BeginExpansion
{\displaystyle\int\limits_{\mathbb{Q}_{p}^{n}}}
%EndExpansion
\left\vert {\LARGE \Lambda}(x,t,\xi,\tau)\right\vert \left\vert f(\xi
,\tau)\right\vert d^{n}\xi d\tau\\
\leq\left\Vert f\left(  x,t\right)  \right\Vert _{\mathfrak{M}_{\lambda}%
}\left\{
%TCIMACRO{\dint \limits_{0}^{t}}%
%BeginExpansion
{\displaystyle\int\limits_{0}^{t}}
%EndExpansion%
%TCIMACRO{\dint \limits_{\mathbb{Q}_{p}^{n}}}%
%BeginExpansion
{\displaystyle\int\limits_{\mathbb{Q}_{p}^{n}}}
%EndExpansion
\left\vert {\LARGE \Lambda}(x,t,\xi,\tau)\right\vert d^{n}\xi d\tau+%
%TCIMACRO{\dint \limits_{0}^{t}}%
%BeginExpansion
{\displaystyle\int\limits_{0}^{t}}
%EndExpansion%
%TCIMACRO{\dint \limits_{\mathbb{Q}_{p}^{n}}}%
%BeginExpansion
{\displaystyle\int\limits_{\mathbb{Q}_{p}^{n}}}
%EndExpansion
\left\vert {\LARGE \Lambda}(x,t,\xi,\tau)\right\vert \left\Vert \xi\right\Vert
_{p}^{\lambda}d^{n}\xi d\tau\right\}  ,
\end{multline*}
by (\ref{Gamma})-(\ref{Cotateo}), (\ref{Est1}) and Proposition \ref{Conv},%
\begin{multline*}
\left\vert u_{1}(x,t)\right\vert \leq C_{0}\left\Vert f\left(  x,t\right)
\right\Vert _{\mathfrak{M}_{\lambda}}\left\{
%TCIMACRO{\dint \limits_{0}^{t}}%
%BeginExpansion
{\displaystyle\int\limits_{0}^{t}}
%EndExpansion
\left(  t-\tau\right)  ^{1+\frac{n-\alpha}{\alpha-n}}d\tau+%
%TCIMACRO{\dint \limits_{0}^{t}}%
%BeginExpansion
{\displaystyle\int\limits_{0}^{t}}
%EndExpansion
\left(  t-\tau\right)  ^{2-\frac{\lambda}{\alpha-n}+\frac{n-\alpha}{\alpha-n}%
}d\tau\right. \\
+%
%TCIMACRO{\dsum \limits_{k=1}^{N+1}}%
%BeginExpansion
{\displaystyle\sum\limits_{k=1}^{N+1}}
%EndExpansion%
%TCIMACRO{\dint \limits_{0}^{t}}%
%BeginExpansion
{\displaystyle\int\limits_{0}^{t}}
%EndExpansion
\left(  t-\tau\right)  ^{1+\frac{n-\alpha_{k}}{\alpha-n}}d\tau+\left(
1+\left\Vert x\right\Vert _{p}^{\lambda}\right)
%TCIMACRO{\dint \limits_{0}^{t}}%
%BeginExpansion
{\displaystyle\int\limits_{0}^{t}}
%EndExpansion
\left(  t-\tau\right)  ^{1+\frac{n-\alpha}{\alpha-n}}d\tau\\
+\left(  1+\left\Vert x\right\Vert _{p}^{\lambda}\right)
%TCIMACRO{\dint \limits_{0}^{t}}%
%BeginExpansion
{\displaystyle\int\limits_{0}^{t}}
%EndExpansion
\left(  t-\tau\right)  ^{2-\frac{\lambda}{\alpha-n}+\frac{n-\alpha}{\alpha-n}%
}d\tau+\left(  1+\left\Vert x\right\Vert _{p}^{\lambda}\right)  \times\\
\left.
%TCIMACRO{\dsum \limits_{k=1}^{N+1}}%
%BeginExpansion
{\displaystyle\sum\limits_{k=1}^{N+1}}
%EndExpansion%
%TCIMACRO{\dint \limits_{0}^{t}}%
%BeginExpansion
{\displaystyle\int\limits_{0}^{t}}
%EndExpansion
\left(  t-\tau\right)  ^{1+\frac{n-\alpha_{k}}{\alpha-n}}d\tau\right\}
\leq\left\Vert f\left(  x,t\right)  \right\Vert _{\mathfrak{M}_{\lambda}%
}\left\{  C_{1}\left(  T\right)  +C_{2}\left(  T\right)  \left(  1+\left\Vert
x\right\Vert _{p}^{\lambda}\right)  \right\}  .
\end{multline*}
Hence,\
\[
\left\vert \frac{u_{1}(x,t)}{1+\left\Vert x\right\Vert _{p}^{\lambda}%
}\right\vert \leq\left\Vert f\left(  x,t\right)  \right\Vert _{\mathfrak{M}%
_{\lambda}}\left\{  \frac{C_{1}\left(  T\right)  }{1+\left\Vert x\right\Vert
_{p}^{\lambda}}+C_{2}\left(  T\right)  \right\}  .
\]

In the same form, one shows that
\[
\left\vert \frac{u_{2}(x,t)}{1+\left\Vert x\right\Vert _{p}^{\lambda}%
}\right\vert \leq\left\Vert \varphi\left(  x\right)  \right\Vert
_{\mathfrak{M}_{\lambda}}\left\{  \frac{C_{1}^{\prime}\left(  T\right)
}{1+\left\Vert x\right\Vert _{p}^{\lambda}}+C_{2}^{\prime}\left(  T\right)
\right\}  ,
\]
therefore $\left\Vert u\left(  x,t\right)  \right\Vert _{\mathfrak{M}%
_{\lambda}}\leq C\left(  \left\Vert \varphi\left(  x,t\right)  \right\Vert
_{\mathfrak{M}_{\lambda}}+\left\Vert f\left(  x,t\right)  \right\Vert
_{\mathfrak{M}_{\lambda}}\right)  $.
\end{proof}

\begin{acknowledgement}
The authors wish to thank to Sergii Torba for many useful comments and
discussions, which led to an improvement of this work.\bigskip
\end{acknowledgement}

\end{document}